\newtheorem{MainThm}{Theorem}
\newtheorem{thm}{Theorem}[section]
\newtheorem{cor}[thm]{Corollary}
\newtheorem{lem}[thm]{Lemma}
\newtheorem{prop}[thm]{Proposition}
\theoremstyle{definition}
\newtheorem{defn}[thm]{Definition}
\theoremstyle{remark}
\newtheorem{rem}[thm]{Remark}
\newtheorem{example}[thm]{Example}
\numberwithin{equation}{section}
\newcommand{\bR}{\mathbb{R}}
\newcommand{\bZ}{\mathbb{Z}}
\newcommand{\MT}[2]{\bold{MT #1}(#2)}
\newcommand{\MTtheta}{\bold{MT}\theta}
\newcommand\lra{\longrightarrow}
\newcommand\lla{\longleftarrow}
\newcommand\Diff{\mathrm{Diff}}
\newcommand\Emb{\mathrm{Emb}}
\newcommand\Bun{\mathrm{Bun}}
\newcommand\map{\mathrm{map}}
\newcommand\Th{\mathrm{Th}}
\newcommand\colim{\mathrm{colim \,}}
\newcommand\hocolim{\mathrm{hocolim \,}}
\newcommand{\CircNum}[1]{\ooalign{\hfil\raise .00ex\hbox{\scriptsize #1}\hfil\crcr\mathhexbox20D}}
\newcommand{\X}{\mathbf{X}}
\newcommand{\Gr}{\mathrm{Gr}}
\title[Embedded Cobordism Categories]{Embedded Cobordism Categories\\ and Spaces of Manifolds}
\author{Oscar Randal-Williams}
\thanks{Supported by an EPSRC Studentship, DTA grant number EP/P502667/1}
\email{randal-w@maths.ox.ac.uk}
\address{Mathematical Institute\\
	24-29 St Giles'\\
	Oxford\\
	OX1 3LB\\
	United Kingdom}
\date{\today}
\subjclass[2000]{57R90}
\keywords{Cobordism categories, configuration spaces, h-principle}
\begin{document}

\begin{abstract}
Galatius, Madsen, Tillmann and Weiss \cite{GMTW} have identified the homotopy type of the classifying space of the cobordism category with objects $(d-1)$-dimensional manifolds embedded in $\bR^\infty$. In this paper we apply the techniques of spaces of manifolds, as developed by the author and Galatius in \cite{GR-W}, to identify the homotopy type of the cobordism category with objects $(d-1)$-dimensional submanifolds of a fixed background manifold $M$.

There is a description in terms of a space of sections of a bundle over $M$ associated to its tangent bundle. This can be interpreted as a form of Poincar\'{e} duality, relating a space of submanifolds of $M$ to a space of functions on $M$.
\end{abstract}
\maketitle

\section{Introduction}

Given a smooth manifold $M$, one can ask for a space $X$ of compact $(d-1)$-dimensional submanifolds of $M$, where paths in $X$ correspond to $d$-dimensional cobordisms in $[0,1] \times M$. One construction of such a space is as the classifying space of the \textit{cobordism category} $\mathcal{C}_d(M)$ having objects compact $(d-1)$-submanifolds of $M$ and morphisms cobordisms in $[0,1] \times M$. Galatius, Madsen, Tillmann and Weiss \cite{GMTW} have identified the homotopy type of this space when $M=\bR^\infty$, as the infinite loop space $\Omega^{\infty-1}\MT{O}{d}$ corresponding to a certain Thom spectrum.

In fact, given a \textit{tangential structure}, which is just a Serre fibration $\theta : \X \to \Gr_d(\bR^\infty)$, one can define the cobordism category $\mathcal{C}_\theta(M)$ using submanifolds equipped with $\theta$-structure. The above authors also show that $B\mathcal{C}_\theta(\bR^\infty) \simeq \Omega^{\infty-1}\MTtheta$ where $\MTtheta := \Th(-\theta^*\gamma_d \to \X)$ is the Thom spectrum of the complement to the rank $d$ vector bundle classified by $\theta$.

To state our main theorem, let us define a functor
$$T_\theta : \{\text{f.d.\ vector subspaces of $\bR^\infty$ and isomorphisms} \} \lra \mathbf{Top}_*$$
by $T_\theta(V) := \Th(\gamma_d^\perp \to \Gr_d^\theta(\bR \oplus V))$, where $\Gr_d^\theta(\bR \oplus V)$ is defined by the fibre product
\begin{diagram}
\Gr_d^\theta(\bR \oplus V) & \rTo & \X \\
\dTo & & \dTo^\theta \\
\Gr_d(\bR \oplus V) & \rTo & \Gr_d(\bR \oplus \bR^\infty).
\end{diagram}

\begin{MainThm}\label{MainTheorem}
Let $\theta: \X \to \Gr_d(\bR^\infty)$ be a Serre fibration and $M$ be a smooth $n$-manifold that is not necessarily compact and possibly has boundary. Then there is a weak homotopy equivalence
$$B \mathcal{C}_\theta(M) \simeq \Gamma_c(T_\theta^{fib}(TM) \to M)$$
where $\Gamma_c$ denotes the space of compactly supported sections, and the functor $T_\theta$ is applied fibrewise to the tangent bundle, by $Fr(M) \times_{GL_n(\bR)} T_\theta(\bR^n)$.
\end{MainThm}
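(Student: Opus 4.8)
I would run the spaces-of-manifolds programme of \cite{GR-W} in a form parametrized by $M$: first replace $B\mathcal{C}_\theta(M)$ by a space of submanifolds of $\bR\times M$, then map that to the section space by scanning, and finally show scanning is an equivalence by excision, reducing to Euclidean charts. For the first move, let $\psi_\theta(M)$ be the space of closed $d$-dimensional submanifolds $W\subseteq\bR\times M$ carrying a $\theta$-structure which are cylindrical in the $\bR$-direction outside a compact interval and have relatively compact image in $M$; the $\bR$-coordinate models the time direction of the cobordism category, and the relative-compactness condition is what will yield $\Gamma_c$ rather than $\Gamma$. As in \cite{GMTW} and \cite{GR-W}, but with $M$ in place of $\bR^\infty$, one obtains a weak equivalence $B\mathcal{C}_\theta(M)\simeq\psi_\theta(M)$: interpolate between the nerve $N_\bullet\mathcal{C}_\theta(M)$ and a semi-simplicial space whose $p$-simplices consist of a $W\in\psi_\theta(M)$ together with an ordered $(p{+}1)$-tuple of regular values of the height function at which $W$ is cylindrical, using that the space of such cutting data is (homotopy) contractible and that reading off the successive cobordisms recovers the nerve. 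The Serre fibration hypothesis on $\theta$ is used throughout to transport $\theta$-structures along the deformations.

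For the scanning map, fix a Riemannian metric on $M$ and send $W$ to the section whose value at $x\in M$ is the germ of $W$ near the fibre $\bR\times\{x\}$, carried via the exponential map into $\bR\times T_xM$ and rescaled outwards so as to give a point of a fixed ``stalk'' space $\cS_x$ of closed $\theta$-$d$-manifolds in $\bR\times T_xM$; this defines $s\colon\psi_\theta(M)\to\Gamma(x\mapsto\cS_x)$, landing in compactly supported sections because $W$ has relatively compact image. The crucial local input is the homotopy equivalence $\cS_x\simeq T_\theta(T_xM)=\Th(\gamma_d^\perp\to\Gr_d^\theta(\bR\oplus T_xM))$: a germ near $\bR\times\{x\}$ is straightened, by a scanning/pushing argument in the fibre directions, either onto the affine $d$-plane it is tangent to — recording its normal displacement from the origin so as to land in the Thom space — or pushed off to infinity in the $T_xM$-directions, landing at the Thom basepoint. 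Since these equivalences are equivariant for the $GL_n(\bR)$-action on frames, they assemble into an identification of the target of $s$ with $\Gamma_c(Fr(M)\times_{GL_n(\bR)}T_\theta(\bR^n)\to M)=\Gamma_c(T_\theta^{fib}(TM)\to M)$.

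It remains to show $s$ is a weak equivalence. Drop the relative-compactness condition to get a functor $\Psi_\theta(-)$ together with a scanning map $\Psi_\theta(M)\to\Gamma(T_\theta^{fib}(TM)\to M)$ to the space of all sections. Both $\Psi_\theta$ and the section functor are contravariant on $n$-manifolds and open embeddings, take finite disjoint unions to products, take an open cover $M=U\cup V$ to a homotopy pullback square — for $\Psi_\theta$ this is the homotopy-sheaf property of \cite{GR-W}, for sections it is automatic — and take increasing unions to homotopy limits. Since any manifold, possibly non-compact and with boundary, is an increasing union of opens built from finitely many charts, a Mayer--Vietoris induction reduces us to $M=\bR^n$ and $M=\bR^{n-1}\times[0,\infty)$, where the section space of the trivial bundle with fibre $T_\theta(\bR^n)$ is weakly equivalent to $T_\theta(\bR^n)$ and $s$ is an equivalence by a direct scanning argument as in \cite{GR-W}. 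Finally $s$ respects the filtration by support: for compact $K\subseteq M$ the $K$-supported subspaces are the homotopy fibres of the restrictions $\Psi_\theta(M)\to\Psi_\theta(M\setminus K)$ and $\Gamma(M)\to\Gamma(M\setminus K)$, on which $s$ is therefore an equivalence, and the colimit over $K$ yields $\psi_\theta(M)\simeq\Gamma_c(T_\theta^{fib}(TM)\to M)$; combined with $B\mathcal{C}_\theta(M)\simeq\psi_\theta(M)$ this is the theorem. (Letting $n\to\infty$, i.e.\ taking $M=\bR^\infty$, the right-hand side becomes $\Omega^{\infty-1}\MTtheta$, consistent with \cite{GMTW}.)

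I expect the main obstacle to be the local equivalence $\cS_x\simeq T_\theta(T_xM)$, carried out continuously in $x$ and compatibly with the tangential structure: this is a family version of an $h$-principle / parametrized surgery statement — that a germ of $\theta$-manifold can be deformed, canonically up to contractible choice, onto its tangent plane or else pushed to infinity — and it is where the methods of \cite{GR-W} do the real work. A secondary, more bookkeeping-level difficulty is rigidifying the scanning map, which a priori depends on contractible choices of metric and exponential map, into an honest natural transformation, and organizing the support filtrations carefully enough to obtain $\Gamma_c$ in the non-compact case.
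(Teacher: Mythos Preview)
Your first step --- replacing $B\mathcal{C}_\theta(M)$ by a space $\psi_\theta(M)$ of $\theta$-submanifolds of $\bR\times M$ via a semi-simplicial interpolation through regular/cylindrical values --- is exactly the paper's poset-model argument (\S\ref{sec:chapter2:PosetModels}), and your local identification of the stalk with $T_\theta(T_xM)$ is the paper's \S\ref{sec:EvaluatingOnRn}. The genuine divergence is in how you prove scanning is an equivalence. The paper shows $\Psi_\theta$ is \emph{micro-flexible} (\S\ref{sec:chapter2:Micro-flexibility}) and then invokes Gromov's $h$-principle for micro-flexible $\Diff$-invariant sheaves on open manifolds; since $\bR\times M$ is always open, this delivers the scanning equivalence in one stroke. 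You instead propose a Mayer--Vietoris induction, using that $\Psi_\theta$ sends open covers to homotopy pullbacks and increasing unions to homotopy limits. The paper actually discusses this alternative in the closing Remark of \S\ref{sec:HPrincipleApplication} and says it can be made to work, but that the required input --- that the relevant restriction maps $\Psi_\theta(U)\to\Psi_\theta(V)$ are quasi-fibrations --- has ``overly complicated'' technical details, which are not supplied. So be careful: your appeal to ``the homotopy-sheaf property of \cite{GR-W}'' is doing real work, since \cite{GR-W} establishes $\Psi_\theta$ as a \emph{strict} sheaf, and upgrading strict pullbacks/limits to homotopy pullbacks/limits is precisely the quasi-fibration statement, which is not free. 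Your route has the virtue of conceptual transparency and of avoiding Gromov's black box; the paper's route trades this for a local, directly checkable condition (Proposition~\ref{micro}) after which the global assembly is outsourced. Your handling of compact supports via homotopy fibres of restriction is essentially equivalent to the paper's filtered-colimit argument in Corollary~\ref{cor:ScanningMapEq}.
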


In particular, taking the background manifold to be $n$-dimensional Euclidean space we obtain the weak homotopy equivalence
\begin{equation}\label{eq:FiniteEuclideanSpaceThm}
B \mathcal{C}_\theta(\bR^n) \simeq \Omega^n \Th(\gamma_d^\perp \to \Gr_d^\theta(\bR^{n+1})).
\end{equation}
This proves Theorem 6.3 of \cite{galatius-2006} in the more general case when the manifolds involved are equipped with a $\theta$-structure. Ayala \cite[Theorem 1.3.1]{Ayala} has also recently obtained the equivalence (\ref{eq:FiniteEuclideanSpaceThm}) by different means, and it also follows from work of Galatius and the author \cite[Theorem 3.12]{GR-W}.

\vspace{2ex}

Let us discuss some applications of this theorem. Given a functor $F: \mathbf{Man} \to \mathbf{Top}$ from the category of manifolds and closed embeddings to topological spaces, one may ask how far it is from being homotopy invariant: that is, what is the initial homotopy invariant functor $F^h$ with a natural transformation $F \to F^h$. Such a functor need not exist, but if it does its value on a manifold is unique up to homotopy equivalence. Considering $B\mathcal{C}_\theta(-)$ as a functor from manifolds to spaces, in \S\ref{sec:HomotopyInvariance} we show that its homotopy invariant approximation is a natural transformation
\begin{equation}\label{eq:CategoryStabilisation}
B\mathcal{C}_\theta(M) \lra \Omega^{\infty-1} \MTtheta \wedge M_+,
\end{equation}
where the right hand functor is equivalent to $\hocolim B\mathcal{C}_\theta(\bR^n \times M)$. Thus $\MTtheta$-bordism is the best homotopy invariant approximation to $B\mathcal{C}_\theta(-)$.

There is a stabilisation map $S^n \wedge T_\theta(V) \to T_\theta(\bR^n \oplus V)$ adjoint to a map $T_\theta(V) \to \Omega^n T_\theta(\bR^n \oplus V)$. This may be applied fibrewise to obtain a stabilisation map
\begin{equation}\label{eq:SectionsStabilisation}
\Gamma_c(T_\theta^{fib}(TM) \to M) \lra \Gamma_c(\Omega^{\infty-1} \MTtheta \wedge_{fib} TM^+ \to M),
\end{equation}
where the right hand functor is equivalent to $\hocolim \Gamma_c(\Omega^nT_\theta^{fib}(\bR^n \oplus TM) \to M)$. This is the space of sections of a parametrised spectrum \cite[p. 25]{CK}, and is interpreted as the $\MTtheta$-cohomology of $M$ twisted by the tangent bundle of $M$. In \S\ref{sec:PoincareDuality} we show that the stabilisation maps (\ref{eq:CategoryStabilisation}) and (\ref{eq:SectionsStabilisation}) convert Theorem \ref{MainTheorem} into Poincar\'{e} duality in $\MTtheta$-theory: they identify $\Omega^{\infty-1} \MTtheta \wedge M_+$ with $\Gamma_c(\Omega^{\infty-1} \MTtheta \wedge_{fib} TM^+ \to M)$. Thus one may think of Theorem \ref{MainTheorem} as an unstable refinement of Poincar\'{e} duality.

\vspace{2ex}

McDuff \cite{McDuff} has introduced a space $C^\pm(M)$ of configurations of positive and negative particles in a manifold $M$. In \S\ref{sec:ConfigurationsOfSignedPoints} we study the category $\mathcal{C}_1^+$ with objects signed configurations of points in $M$ and morphisms oriented 1-manifolds in $[0,1] \times M$, and ask how far it is from being the category of smooth paths in the space $C^\pm(M)$. We show that it cannot reasonably be considered to be a category of paths (the topology on spaces of paths in $C^\pm(M)$ is far coarser than that on spaces of morphisms in $\mathcal{C}_1^+$, even though they may be taken to be bijective), but nonetheless we show that $B\mathcal{C}_1^+(M)$ does have the homotopy type of $C^\pm(M)$. We also speculate as to what the analogue of this statement should be for manifolds of positive dimension.

\vspace{2ex}

In \S\ref{sec:ConfigurationSpaceMonoid} we show how Theorem \ref{MainTheorem} may be used to recover a theorem of Segal \cite{SegalConfigurationSpace} on the homotopy type of the classifying space of the labelled configuration space monoid $C(\bR \times M;X)$. The homotopy invariance results then imply that the homotopy invariant approximation to $BC(-;X)$ is $Q(\Sigma-_+ \wedge X_+)$, the homology theory represented by the suspension spectrum of the space of labels $X$.

In \S\ref{sec:BraidMonoid} we show how the methods of this paper (though not the precise statement of Theorem \ref{MainTheorem}) may be used to study the monoid $\mathcal{B} := \coprod_n B\beta_n$ of classifying spaces of braid groups, where the product is by disjoint union of braids. This monoid may be taken to be the classifying space of a certain monoidal category $\mathcal{C}_{1,3}^{Br}$ whose morphisms are braids in $\bR^3$. There exists a homotopical approximation $\mathcal{C}_{1,3}^{hBr}$ (whose morphisms are no longer braids, but something like ``virtual braids") receiving a functor from $\mathcal{C}_{1,3}^{Br}$. The classifying space of the homotopical approximation may be computed by the methods of this paper to be $\Omega^2 S^2$, which is well-known to be the group completion of $\mathcal{B}$. This suggests that it may be useful to study spaces of manifolds with rigid structure by studying their analogues with homotopical structure, which the methods of this paper allow one to compute.

\subsection{Outline}
In \S\ref{sec:chapter2:categorydefinition} we give a precise definition of the topological categories $\mathcal{C}_\theta(M)$. In \S\ref{sec:chapter2:SpacesOfManifolds} we recall the topological sheaf $\Psi_\theta(-)$ assigning to an open subset of $\bR^n$ the space of $d$-dimensional $\theta$-submanifolds of it, as defined by Galatius and the author \cite{GR-W}, and explain how to extend it to the site of all $n$-manifolds with boundary. In \S\ref{sec:chapter2:PosetModels} we relate the classifying space of $\mathcal{C}_\theta(M)$ to the space $\Psi_\theta(M \times \bR)$ if $M$ is compact, and to a variant of it if $M$ is not compact. In \S\ref{sec:chapter2:Micro-flexibility} we prove that the sheaf $\Psi_\theta$ is micro-flexible (in the sense of Gromov \cite{Gromov}), so that it satisfies an $h$-principle on open manifolds. In \S \ref{sec:HPrincipleApplication} we explain how to apply Gromov's $h$-principle to establish Theorem \ref{MainTheorem}. In \S\ref{sec:HomotopyInvariance}--\ref{sec:BraidMonoid} we discuss some applications of this theorem, which have been outlined above.

\section{The cobordism category \texorpdfstring{$\mathcal{C}_{\theta}(M^n)$}{of submanifolds of M}} \label{sec:chapter2:categorydefinition}
Let $M$ be a smooth $n$-dimensional manifold (second countable and Hausdorff), possibly with boundary, which we will call the \textit{background manifold}. We will write $\mathring{M}$ for $M \setminus \partial M$. Let $\theta : \X \to \Gr_d(\bR^\infty)$ be a Serre fibration, write $\gamma_d \to \Gr_d(\bR^\infty)$ for the universal bundle and $\theta^* \gamma_d \to \X$ for the bundle classified by $\theta$.

\begin{defn}
For $V \to X$ and $U \to Y$ two vector bundles, we write $\Bun(U, V)$ for the subspace of $\map(U, V)$ (with the compact-open topology) consisting of those maps which are linear isomorphisms on each fibre.
\end{defn}

We define the category $\mathcal{C}_{\theta}(M)$ by
\begin{itemize}
\item An object is a triple $(a, X, \ell)$ where $\{ a \} \times X \subseteq \{ a \} \times \mathring{M}$ is a compact $(d-1)$-dimensional submanifold and $\ell \in \Bun(\bR \oplus TX, \theta^*\gamma_d)$.

\item A morphism $(a, X_a, \ell_a) \lra (b, X_b, \ell_b)$ is a pair $(W, \ell)$, where $W \subseteq [a,b] \times \mathring{M}$ is a compact $d$-dimensional submanifold, such that for some $\epsilon > 0$ (the \textit{collar size})
\begin{enumerate}[(i)]
	\item $W \cap ([a, a+ \epsilon) \times M) = [a, a+ \epsilon) \times X_a$,
	\item $W \cap ((b- \epsilon, b] \times M) = (b-\epsilon, b] \times X_b$,
	\item $\partial W = W \cap (\{ a, b \} \times M)$,
\end{enumerate}
and $\ell \in \Bun(TW, \theta^*\gamma_d)$. There are canonical isomorphisms $TW|_{X_a}  \cong \bR \oplus TX_a$ and $TW|_{X_b} \cong \bR \oplus TX_b$, and we insist that $\ell|_{X_a} = \ell_a$ and $\ell|_{X_b} = \ell_b$ under these identifications.
\end{itemize}

\begin{rem}
We could define a slightly different category by choosing once and for all an embedding of $M$ into $\bR^\infty$, and take maps $\ell : N \to \X$ which cover the Gauss map $N \to \Gr_d(\bR^\infty)$ given by $TN \to TM \to T\bR^\infty$. Taking $M = \bR^\infty$ and the identity embedding, this is the category defined by \cite[\S 5]{GMTW}. However, as remarked in that paper, it produces a category homotopy equivalent to the one we define here.
\end{rem}

\subsection{Topologising the category}
In this section we follow \cite{GMTW} closely, only making those slight changes necessary to take the background manifold $M$ into account. We will topologise the objects by firstly considering a parametrised version and then dividing out by the action of reparametrisation. A parametrised object is a quadruple $(a, X, \ell, e)$ where $e : X \hookrightarrow \{a\} \times \mathring{M}$ is a smooth embedding, and $\ell: \bR \oplus TX \lra \theta^*\gamma_d$. Then we would topologise these objects as
$$\mathbb{R} \times \coprod_{[X]} \Emb(X, \mathring{M}) \times \Bun(\bR \oplus TX, \theta^* \gamma_d)$$
where the disjoint union of over all diffeomorphism types of compact $(n-1)$-manifolds $[X]$, and the embedding space has the $C^\infty$ topology. Passing to the unparameterised version by taking the quotient by the action of $\Diff(X)$ then gives a natural bijection between $ob(\mathcal{C}_d^{\theta}(M))$ and
$$\mathbb{R} \times \coprod_{[X]} \Emb(X, \mathring{M}) \times_{\Diff(X)} \Bun(\bR \oplus TX, \theta^* \gamma_d)$$
and we topologise $ob(\mathcal{C}_d^{\theta}(M))$ so as to make this bijection  a homeomorphism.

\vspace{2ex}

To topologise the space of morphisms, we will again consider a parametrised version. Let $(W, h_0, h_1)$ be an abstract collared cobordism from $M_0$ to $M_1$, so a compact $d$-manifold with disjoint embeddings
$$h_0 : [0, 1) \times M_0 \lra W$$
$$h_1 : (0, 1] \times M_1 \lra W$$
such that $\partial W = h_0(\{0\} \times M_0) \coprod h_1(\{1\} \times M_1)$. For each $0 < \epsilon < \frac{1}{2}$ let $\Emb^{\epsilon}_\partial(W, [0,1] \times \mathring{M})$ be the space of all embeddings $j$ such that
$$j \circ h_0(t_0, x_0) = (t_0, j_0(x_0))$$
$$j \circ h_1(t_1, x_1) = (t_1, j_1(x_1))$$
for all $t_0 \in [0, \epsilon)$, $t_1 \in (1-\epsilon, 1]$ and $x_i \in M_i$, for some embeddings $j_i : M_i \hookrightarrow M$. We then define
$$\Emb_\partial(W, [0,1] \times \mathring{M}) := \colim_{\epsilon \to 0} \Emb^{\epsilon}_\partial(W, [0,1] \times \mathring{M}).$$
Let $\Diff^{\epsilon}_\partial(W)$ be the topological group of those diffeomorphisms of $W$ that restrict to product diffeomorphisms on the $\epsilon$-collars of $W$, and define
$$\Diff_\partial(W) := \colim_{\epsilon \to 0} \Diff^{\epsilon}_\partial(W).$$
Then the group $\Diff_\partial(W)$ acts continuously on $\Emb_\partial(W, [0,1] \times \mathring{M})$.

A parametrised morphism is a quintuple $(a, b, W, \ell, e)$ where $a \leq b$, $e: W \hookrightarrow [a,b] \times \mathring{M}$ is an embedding with collar size $\epsilon$ (which defines embedded collars $h_0, h_1$ as above on $W$) and $\ell \in \Bun(TW, \theta^*\gamma_d)$. We topologise these parametrised morphisms as (here ``objects'' denotes the set of identity morphisms, which we topologise as we did the parametrised object space)
$$\{\text{objects}\} \coprod \left ( \mathbb{R}^2_+ \times \coprod_{[W]} \Emb_\partial(W, [0,1] \times \mathring{M}) \times \Bun(TW, \theta^* \gamma_d) \right)$$
where the disjoint union is over all diffeomorphism types of triples $(W, h_0, h_1)$. Passing to the unparameterised version gives a bijection between $mor(\mathcal{C}_d^{\theta}(M))$ and
$$ob(\mathcal{C}_d^{\theta}(M)) \coprod \left ( \mathbb{R}^2_+ \times \coprod_{[W]} \Emb_\partial(W, [0,1] \times \mathring{M}) \times_{\Diff_\partial(W)} \Bun(TW, \theta^* \gamma_d) \right )$$
and we topologise the space of morphisms to make this bijection a homeomorphism.

\section{Recollections on \texorpdfstring{$\Psi_\theta(-)$}{spaces of manifolds}}\label{sec:chapter2:SpacesOfManifolds}

\begin{defn}
For a $n$-manifold $M$, possibly with boundary, define the set $\Psi_\theta(M)$ to be the set of subsets $X \subset \mathring{M}$ which are closed as subspaces of $M$, have the structure of $d$-dimensional smooth submanifolds, and are equipped with a $\theta$-structure $\ell : TX \to \theta^* \gamma_d$.
\end{defn}

Galatius and the author \cite{GR-W} have defined a topology on $\Psi_\theta(U)$ when $U$ is an open subset of Euclidean space $\bR^n$.

\begin{thm}[\cite{GR-W}]
$\Psi_\theta(-)$ defines a continuous functor and a sheaf of topological spaces on the site $\mathcal{O}(\bR^n)$ of open subsets of $\bR^n$.
\end{thm}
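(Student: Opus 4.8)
The plan is to recall the topology on $\Psi_\theta(U)$ from \cite{GR-W} and then read off functoriality, continuity of the restriction maps, and descent directly from a neighbourhood basis. Recall that a basic open neighbourhood of $(X,\ell) \in \Psi_\theta(U)$ is indexed by a compact set $K \subseteq U$ together with a ``closeness datum'' $\varepsilon$ supported near $K$: having fixed a tubular neighbourhood of $X$ in $U$, one declares $(Y,\ell')$ to lie in $\mathcal{N}(X,\ell;K,\varepsilon)$ if, over some open neighbourhood of $K$, the submanifold $Y$ is the graph of a section of the normal bundle of $X$ that is $\varepsilon$-small in the $C^\infty$ (finite-jet, compact--open) topology, and if the $\theta$-structure $\ell'$, transported to $X$ along the resulting graphing diffeomorphism, is $\varepsilon$-close to $\ell$ as a bundle map $TX \to \theta^*\gamma_d$. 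These sets form a basis for the topology of $\Psi_\theta(U)$.

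Functoriality on underlying sets is immediate: for an inclusion $\iota : U \hookrightarrow V$ of open subsets of $\bR^n$, the assignment $(X,\ell) \mapsto (X \cap U, \ell|_{X \cap U})$ is well defined, since $X \cap U$ is an open subset of $X$, hence a $d$-dimensional submanifold, and is closed in $U$ because $X$ is closed in $V$, and it is visibly compatible with composition. Continuity of the induced map $\rho : \Psi_\theta(V) \to \Psi_\theta(U)$ follows from the neighbourhood basis: a compact $K \subseteq U$ is also compact in $V$, and a tubular neighbourhood of $X \cap U$ near $K$ is the restriction of one of $X$ near $K$, so $\rho^{-1}\big(\mathcal{N}(X \cap U, \ell; K, \varepsilon)\big) \supseteq \mathcal{N}(X,\ell;K,\varepsilon)$. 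Hence $\Psi_\theta$ is a presheaf of topological spaces. (The same local comparison shows that the action $\Emb(U,V) \times \Psi_\theta(V) \to \Psi_\theta(U)$, $\big(e,(X,\ell)\big) \mapsto \big(e^{-1}(X), \ell \circ de\big)$, of the space of open embeddings with its $C^\infty$ topology is jointly continuous, which is the sense in which $\Psi_\theta$ is a \emph{continuous} functor on the enlarged category of open subsets and open embeddings.)

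Now let $\{U_i\}_{i \in I}$ be an open cover of $U$. Descent on underlying sets is straightforward: if $(X,\ell),(X',\ell') \in \Psi_\theta(U)$ restrict to the same element of each $\Psi_\theta(U_i)$ then $X = X'$ and $\ell = \ell'$ since the $U_i$ cover $U$; and given compatible $(X_i,\ell_i) \in \Psi_\theta(U_i)$ the union $X := \bigcup_i X_i \subseteq U$ is a $d$-dimensional submanifold (submanifold charts are inherited from the $X_i$), closed in $U$ (closedness is local and $X \cap U_i = X_i$), and the $\ell_i$, agreeing on overlaps, glue to a $\theta$-structure $\ell$; so $(X,\ell) \in \Psi_\theta(U)$ restricts to the given data. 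This produces a continuous bijection from $\Psi_\theta(U)$ onto the equalizer $E \subseteq \prod_i \Psi_\theta(U_i)$ (with the subspace topology), and to see it is a homeomorphism it suffices to check it is an open map. Given $(X,\ell)$ and a basic neighbourhood $\mathcal{N}(X,\ell;K,\varepsilon)$, cover the compact set $K$ by finitely many $U_{i_1},\dots,U_{i_m}$ and, by a standard shrinking argument, choose compact $K_j \subseteq U_{i_j}$ with $\bigcup_j K_j = K$; one then checks there are closeness data $\varepsilon_j$ near $K_j$ with $\bigcap_{j=1}^m \rho_{i_j}^{-1}\big(\mathcal{N}(X \cap U_{i_j}, \ell; K_j, \varepsilon_j)\big) \subseteq \mathcal{N}(X,\ell;K,\varepsilon)$, because a $\theta$-submanifold that is $C^\infty$-close to $X$ over a neighbourhood of each $K_j$ is $C^\infty$-close to $X$ over a neighbourhood of $K = \bigcup_j K_j$. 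This exhibits the image of $\mathcal{N}(X,\ell;K,\varepsilon)$ as open in $E$, so the bijection is a homeomorphism and $\Psi_\theta$ is a sheaf of topological spaces.

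The main obstacle is the last step: it requires unwinding the topology on $\Psi_\theta$ carefully enough to know that it is ``local in the cover'' — that closeness of a $\theta$-submanifold to $X$ over $K$ is detected by closeness over the pieces of a finite compact refinement of $K$ — and in particular keeping track of the tubular neighbourhoods and of the $\theta$-structures, which, being bundle maps out of the tangent bundle, must be compared only after transport along the graphing diffeomorphisms. Everything else is formal once the topology is in hand, which is precisely why \cite{GR-W} set it up with a neighbourhood basis indexed by compact subsets; the details are carried out there.
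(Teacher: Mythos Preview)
The paper does not actually prove this statement: it is quoted as a result of \cite{GR-W}, and the present paper immediately moves on to \emph{extending} it from $\mathcal{O}(\bR^n)$ to the site of all $n$-manifolds (Theorem~\ref{thm:SheafExtension} and the subsequent lemmas). So there is no proof here to compare your proposal against.

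That said, your outline is a faithful reconstruction of the kind of argument one expects from \cite{GR-W}: the topology there is indeed generated by a basis indexed by compact subsets $K$ and $C^\infty$-closeness of sections of the normal bundle (together with closeness of the $\theta$-structure), and with that in hand continuity of restriction and the sheaf property follow from exactly the ``compactly supported'' nature of the basic opens that you exploit. Your own closing remark already identifies the only nontrivial step---that closeness over $K$ is detected by closeness over the pieces of a finite compact refinement---and correctly defers the bookkeeping to \cite{GR-W}. There is nothing to correct; just be aware that in the context of this paper the theorem is a black box, not something reproved.
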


We explain here how to promote $\Psi_\theta(-)$ to a sheaf of topological spaces on the site of \textit{all} $n$-manifolds (without boundary!) and open embeddings. There is a unique way to do this, by the following general theorem whose proof we also include.

\begin{thm}\label{thm:SheafExtension}
Suppose given a commutative square
\begin{diagram}
\mathcal{O}(\bR^n) & \rTo^{\Psi^t_{\mathcal{O}(\bR^n)}} & \mathbf{Top} \\
\dTo & \ruTo[dotted]^{\Psi^t} & \dTo \\
\mathcal{O}_n & \rTo^{\Psi} & \mathbf{Set}
\end{diagram}
where $\Psi$ is a sheaf of sets on the site $\mathcal{O}_n$ of all smooth $n$-manifolds and open embeddings, and $\Psi^t_{\mathcal{O}(\bR^n)}$ is a continuous sheaf of topological spaces the site of open subsets of $\bR^n$. Then there exists a unique continuous sheaf of topological spaces $\Psi^t$ making the diagram commute.
\end{thm}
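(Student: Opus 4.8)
The plan is to build $\Psi^t(M)$ for an arbitrary $n$-manifold $M$ by gluing together the known topological spaces $\Psi^t_{\mathcal{O}(\bR^n)}(U)$ over a chart atlas, and to show that both the construction and the topology are forced. First I would fix a smooth atlas $\{\phi_i : U_i \xrightarrow{\cong} V_i \subseteq \bR^n\}_{i \in I}$ of $M$, which exhibits $M$ as the colimit in $\mathcal{O}_n$ of the diagram of the $U_i$ together with all (open subsets of) double intersections $U_i \cap U_j$ and, if one wants a genuinely cofinal diagram, all finite intersections. Since $\Psi$ is a sheaf of sets on $\mathcal{O}_n$, we have $\Psi(M) = \lim\bigl(\prod_i \Psi(U_i) \rightrightarrows \prod_{i,j} \Psi(U_i \cap U_j)\bigr)$ as a set, and each $\Psi(U_i) \cong \Psi(V_i)$ carries the topology of $\Psi^t_{\mathcal{O}(\bR^n)}(V_i)$ — transported across $\phi_i$ — while the intersections $U_i \cap U_j$ are diffeomorphic to open subsets of $\bR^n$ as well, so $\Psi$ restricted to the comma category of charts is already canonically topologised. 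I would then \emph{define} $\Psi^t(M)$ to be this equalizer, given the subspace topology from the product of the $\Psi^t_{\mathcal{O}(\bR^n)}(V_i)$ (equivalently, the limit topology).

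The next step is to check this is well defined, i.e.\ independent of the atlas, and functorial for open embeddings $f : M \to N$. Independence follows because any two atlases have a common refinement, and passing to a refinement does not change the limit topology: a finer atlas gives a diagram with more terms but the same limit, and the universal property of the limit identifies the topologies. For functoriality, an open embedding $f$ pulls back an atlas of $N$ to an atlas of $M$ and induces compatible continuous maps on each chart (here I use that $\Psi^t_{\mathcal{O}(\bR^n)}$ is a \emph{continuous} functor on $\mathcal{O}(\bR^n)$, and that restriction/open-embedding maps between opens of $\bR^n$ are covered), hence a continuous map on the limits. Commutativity of the square is immediate from the construction, since on a chart $U_i$ the space $\Psi^t(U_i)$ is by fiat $\Psi^t_{\mathcal{O}(\bR^n)}(V_i)$. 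The sheaf property on $\mathcal{O}_n$ then follows from the sheaf property of $\Psi^t_{\mathcal{O}(\bR^n)}$ together with the standard fact that a limit of sheaves computed chart-by-chart is a sheaf; concretely, for an open cover of $M$ one refines to a common chart atlas and reduces the descent condition to the one already known over $\bR^n$.

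For uniqueness, suppose $\Psi'$ is another continuous sheaf making the diagram commute. Commutativity forces $\Psi'(U) = \Psi^t_{\mathcal{O}(\bR^n)}(U)$ as topological spaces for every $U$ diffeomorphic to an open subset of $\bR^n$ — in particular on every chart and every intersection of charts of any atlas of $M$. The sheaf condition for $\Psi'$ then expresses $\Psi'(M)$ as the same equalizer of the same topological spaces, with the same (limit) topology; hence $\Psi'(M) = \Psi^t(M)$ naturally. I expect the main obstacle to be not any single hard estimate but the bookkeeping needed to verify that the limit topology genuinely is insensitive to the choice of atlas and that all the structure maps in sight are continuous — in particular one must be a little careful that the diagram of charts and their intersections is set up so that its colimit in $\mathcal{O}_n$ really is $M$ and its limit under $\Psi^t_{\mathcal{O}(\bR^n)}$ really computes the right thing; once that categorical framing is pinned down, continuity of everything is a formality from the continuity of $\Psi^t_{\mathcal{O}(\bR^n)}$. (The restriction to manifolds \emph{without} boundary is what guarantees every chart, and every intersection of charts, is an object of $\mathcal{O}(\bR^n)$, so no separate ``collar'' analysis is needed here.)
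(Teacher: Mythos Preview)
Your approach is essentially identical to the paper's: define $\Psi^t(M)$ as the equaliser (with the subspace topology) over an atlas of affine patches, verify atlas-independence via common refinement, establish functoriality and the sheaf property by reducing to charts, and deduce uniqueness from the sheaf condition forcing the same equaliser. The one point the paper treats more carefully is that ``continuous sheaf'' requires the \emph{joint} map $\Emb(N,M) \times \Psi^t(M) \to \Psi^t(N)$ to be continuous, not merely continuity for each fixed embedding; the paper handles this by showing that near any given embedding there is an open neighbourhood $W \subset \Emb(N,M)$ on which a single fixed pair of atlases (with $V_i$ compactly contained in $U_i$) works uniformly, so that $W \to \prod_i \Emb(V_i,U_i)$ is continuous and one can pass to the limit.
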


Such a sheaf $\Psi^t$, if it exists, should have the following property. Call an open set $U \subseteq M$ an \textit{affine patch} if it is diffeomorphic to an open subset of $\bR^n$. Let $\{  U_i \lra M \}_{i \in I}$ be an open cover of $M$ by affine patches. Then
\begin{diagram}
\Psi(M) & \rTo & \prod_{i \in I} \Psi(U_i) & \pile{\rTo \\ \rTo} & \prod_{(i, j) \in I^2} \Psi(U_i \cap U_j)
\end{diagram}
is an equaliser diagram in $\mathbf{Set}$. The $U_i$ have open embeddings into Euclidean space, so choosing one defines a topology on $\Psi(U_i) \cong \Psi^t_{\mathcal{O}(\bR^n)}(U_i)$, which is independent of the choice of embedding (as $\Psi^t_{\mathcal{O}(\bR^n)}(-)$ is a continuous sheaf). Similarly for the $U_i \cap U_j$. This ought to also be an equaliser diagram in $\mathbf{Top}$, so we may choose to topologise $\Psi(M)$ to make it so: that is, give $\Psi(M)$ the subspace topology.

\begin{lem}
This topology on $\Psi(M)$ is well-defined.
\end{lem}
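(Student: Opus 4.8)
The plan is to exhibit the topology on $\Psi(M)$ described above as an intrinsic one, independent of the chosen cover from the outset. Recall that for any affine patch $U \subseteq M$ the space $\Psi(U)$ carries a canonical topology: choose an open embedding $\phi : U \hookrightarrow \bR^n$ and transport the topology of $\Psi^t_{\mathcal{O}(\bR^n)}(\phi(U))$ along it; this is independent of $\phi$ because $\Psi^t_{\mathcal{O}(\bR^n)}$ is a continuous sheaf, as already noted. Let $\sigma$ be the initial topology on the set $\Psi(M)$ — i.e.\ the coarsest topology making all the following maps continuous — with respect to the restriction maps $r_U : \Psi(M) \to \Psi(U)$, $U$ ranging over \emph{all} affine patches of $M$. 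I claim that for every cover $\{ U_i \}_{i \in I}$ of $M$ by affine patches, the topology $\tau$ constructed in the paragraph preceding the lemma — the subspace topology via the injection $\Psi(M) \hookrightarrow \prod_{i \in I} \Psi(U_i)$, equivalently the initial topology with respect to the maps $\{ r_{U_i} \}_{i \in I}$ — coincides with $\sigma$. Since $\sigma$ does not refer to the cover, this is exactly the assertion that the topology is well-defined.

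One inequality is immediate: $\{ U_i \}_{i \in I}$ is a subfamily of the family of all affine patches, so the initial topology for the smaller family, $\tau$, is coarser than $\sigma$. For the reverse, I must check that $r_U : (\Psi(M), \tau) \to \Psi(U)$ is continuous for an arbitrary affine patch $U$. Fixing an open embedding $U \hookrightarrow \bR^n$, the sets $U \cap U_i$ ($i \in I$) form an open cover of $U$ in the site $\mathcal{O}(\bR^n)$; as $\Psi^t_{\mathcal{O}(\bR^n)}$ is a \emph{sheaf of topological spaces}, the canonical map $\Psi(U) \to \prod_{i \in I} \Psi(U \cap U_i)$ is a topological embedding, so $\Psi(U)$ has the initial topology with respect to the restrictions $\Psi(U) \to \Psi(U \cap U_i)$. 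Hence it suffices to show each composite $(\Psi(M), \tau) \to \Psi(U) \to \Psi(U \cap U_i)$ is continuous; but this composite is just the restriction map $\Psi(M) \to \Psi(U \cap U_i)$, which also factors as
$$(\Psi(M), \tau) \lra \Psi(U_i) \lra \Psi(U \cap U_i),$$
the first map being the projection to the $i$-th factor, continuous by definition of $\tau$, and the second being restriction along the open embedding $U \cap U_i \hookrightarrow U_i$, a morphism of $\mathcal{O}(\bR^n)$ once all three sets are identified with open subsets of $\bR^n$, hence continuous because $\Psi^t_{\mathcal{O}(\bR^n)}$ is a continuous functor. (The two topologies on $\Psi(U \cap U_i)$ coming from the two ambient charts agree, by the independence statement, so there is no ambiguity.) Therefore $\sigma$ is coarser than $\tau$, and $\tau = \sigma$.

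The only non-formal input is the appeal to $\Psi^t_{\mathcal{O}(\bR^n)}$ being a sheaf of topological spaces, used to test continuity of a map into $\Psi(U)$ on the affine pieces $U \cap U_i$ and thereby descend from a cover back to a single patch; this is the step I expect to carry the weight of the argument. The remaining ingredients — independence of the topology on each $\Psi(U)$ from the ambient chart, continuity of restriction maps, functoriality of $\Psi$ on $\mathcal{O}_n$, and the reduction of ``well-defined'' to ``$\tau = \sigma$'' — are formal manipulations, so beyond careful bookkeeping of charts I foresee no obstacle.
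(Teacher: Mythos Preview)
Your proof is correct and takes a genuinely different route from the paper's. The paper compares two covers directly: it reduces without loss of generality to the case where one cover refines the other, builds a continuous bijection between the two candidate topologies via a map of equaliser diagrams, and then verifies openness by an explicit chase of (sub)basic open sets --- implicitly invoking the sheaf property of $\Psi^t_{\mathcal{O}(\bR^n)}$ on each $U_i$ to rewrite open subsets of $\Psi(U_i)$ in terms of the refining patches. You instead give an intrinsic characterisation: the topology is the initial one for restriction to \emph{all} affine patches, and any single cover already realises it. Your approach is cleaner --- it avoids the refinement reduction and the open-set chase, and it isolates precisely where the sheaf hypothesis on $\Psi^t_{\mathcal{O}(\bR^n)}$ enters (to test continuity into $\Psi(U)$ on the pieces $U\cap U_i$). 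The paper's argument is more hands-on and perhaps easier to visualise, but both rest on the same two ingredients (chart-independence on affine patches and the topological sheaf property on $\mathcal{O}(\bR^n)$); yours simply packages them more efficiently.
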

\begin{proof}
Let $\{ U'_k \to M \}_{k \in K}$ be another open cover by affine patches, and write $\Psi^t(M)'$ for the topology defined using this cover. We may suppose without loss of generality that the cover $\{U'_k\}$ is a \textit{refinement} of the cover $\{ U_i \}$. Let $K_i \subset K$ be such that $\cup_{k \in K_i} U'_k = U_i$. There is then a map of equaliser diagrams
\begin{diagram}
\Psi^t(M) & \rTo & \prod_{i \in I} \Psi^t(U_i) & \pile{\rTo \\ \rTo} & \prod_{(i, j) \in I^2} \Psi^t(U_i \cap U_j) \\
\dTo[dotted] & & \dTo & & \dTo \\
\Psi^t(M)' & \rTo & \prod_{i \in I} \prod_{k \in K_i} \Psi^t(U'_k) & \pile{\rTo \\ \rTo} & \prod_{(i, j) \in I^2} \prod_{(k, \ell) \in K_i \times K_\ell} \Psi^t(U'_k \cap U'_\ell) 
\end{diagram}
which defines a continuous bijection $\Psi^t(M) \to \Psi^t(M)'$ with underlying map of sets the identity. Let $V$ be an open subset of $\Psi^t(M)$, then it is $(\prod_{i \in I} V_i) \cap \Psi^t(M)$, for $V_i$ open in $\Psi(U_i)$. Furthermore, $V_i$ is $(\prod_{k \in K_i} V'_{k}) \cap \Psi^t(U_i)$ for $V'_{k}$ open in $\Psi^t(U'_{k})$. Thus the image of $V$ in $\Psi^t(M)'$ is $(\prod_{i \in I} \prod_{k \in K_i} V'_k) \cap \Psi^t(M)'$ so is open. Thus the map is also open, so a homeomorphism.
\end{proof}

\begin{lem}
Let $N^n \subset M^n$. Then the map $\Emb(N, M) \times \Psi^t(M) \to \Psi^t(N)$ is continuous. Thus $\Psi^t$ is a continuous presheaf.
\end{lem}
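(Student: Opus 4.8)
The point to be careful about is that, for a manifold $M$ that is not an open subset of $\bR^n$, the topology on $\Psi^t(M)$ has only been defined indirectly, as the subspace topology under $\Psi^t(M)\hookrightarrow\prod_i\Psi^t(U_i)$ for an affine cover $\{U_i\}$ of $M$ (independence of the cover being the previous lemma). In particular $\Psi^t(N)$ carries the initial topology with respect to the restriction maps $r_\gamma:\Psi^t(N)\to\Psi^t(N_\gamma)$ for \emph{any} open cover $\{N_\gamma\}$ of $N$, and basic open sets of $\Psi^t(N)$ are finite intersections of sets pulled back along the $r_\gamma$. So the map $(e,X)\mapsto e^{-1}(X)$ of the statement is continuous at a point $(e_0,X_0)$ as soon as each composite $r_\gamma\circ(\text{map})$ is, and we are free to use, for each given $(e_0,X_0)$, a cover of $N$ adapted to $e_0$.

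So, fix $(e_0,X_0)$. The plan is to choose the cover $\{N_\gamma\}$ of $N$ to consist of affine patches with $\overline{N_\gamma}$ compact and small enough that $e_0(\overline{N_\gamma})$ is contained in a single affine patch $W_\gamma\subseteq M$; this is possible because $e_0$ is continuous and $M$ is locally Euclidean and locally compact. The composite $r_\gamma\circ(\text{map})$ sends $(e,X)\mapsto (e|_{N_\gamma})^{-1}(X)$. The set $\mathcal U_\gamma=\{e:e(\overline{N_\gamma})\subseteq W_\gamma\}$ is open in the $C^\infty$ (indeed $C^0$) topology on $\Emb(N,M)$ and contains $e_0$, and on $\mathcal U_\gamma\times\Psi^t(M)$ the map factors as
\[
\mathcal U_\gamma\times\Psi^t(M)\lra \Emb(N_\gamma,W_\gamma)\times\Psi^t(W_\gamma)\lra\Psi^t(N_\gamma),
\]
where the first arrow is (restrict to $N_\gamma$ and corestrict to $W_\gamma$) $\times$ (the structure map $\Psi^t(M)\to\Psi^t(W_\gamma)$) and the second is the action map for the affine patches $N_\gamma,W_\gamma$. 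The first arrow is continuous: precomposition with $N_\gamma\hookrightarrow N$ and corestriction along the open subset $W_\gamma\subseteq M$ are continuous for the $C^\infty$ topology, and $\Psi^t(M)\to\Psi^t(W_\gamma)$ is continuous by construction of the topology on $\Psi^t(M)$ (one may take an affine cover of $M$ containing $W_\gamma$). Choosing diffeomorphisms of $N_\gamma$ and $W_\gamma$ with open subsets $U,V\subseteq\bR^n$ identifies the second arrow with the embedding action $\Emb(U,V)\times\Psi_\theta(V)\to\Psi_\theta(U)$, which is continuous because $\Psi_\theta(-)$ is a continuous functor on $\mathcal O(\bR^n)$ by \cite{GR-W}. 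Hence $r_\gamma\circ(\text{map})$ is continuous at $(e_0,X_0)$ for every $\gamma$, so the map is continuous at $(e_0,X_0)$, and therefore everywhere. Fixing $e$ then shows each restriction map $\Psi^t(M)\to\Psi^t(N)$ is continuous, i.e.\ $\Psi^t$ is a continuous presheaf.

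The only genuinely non-formal ingredient is the input from \cite{GR-W}: the joint continuity of the embedding action on open subsets of $\bR^n$. Everything else is bookkeeping with initial and subspace topologies together with the standard facts that restriction and corestriction of embeddings are continuous for the $C^\infty$ topology. I expect the main obstacle to be the localisation in the embedding variable --- one must shrink the cover $\{N_\gamma\}$ of $N$ until each $e_0(\overline{N_\gamma})$ lies in one chart of $M$, so that the action map can legitimately be transported into Euclidean space and the result of \cite{GR-W} applied; this uses relative compactness of the $N_\gamma$ and is exactly the device used to prove that diffeomorphisms of a Euclidean open set act continuously on $\Psi_\theta$.
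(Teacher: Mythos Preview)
Your proof is correct and follows essentially the same strategy as the paper: fix an embedding, choose an affine cover $\{N_\gamma\}$ of $N$ with each $\overline{N_\gamma}$ compact and mapped by the fixed embedding into an affine patch of $M$, observe that this condition is open in $\Emb(N,M)$, and reduce to the continuity of the action on open subsets of $\bR^n$ established in \cite{GR-W}. The only cosmetic difference is that the paper packages the reduction as a continuous map of equaliser diagrams (and works near the given inclusion $N\subset M$ rather than an arbitrary $e_0$), whereas you phrase it directly via the initial topology and continuity at a point; these are equivalent formulations of the same argument.
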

\begin{proof}
Let $\{ V_i\}_{i \in I}$ be an open cover of $N$ by affine patches and $\{ U_i\}_{i \in I \coprod J}$ be an open cover of $M$ by affine patches such that under the standard embedding $V_i$ is compactly contained in $U_i$. There is a open neighbourhood $W \subset \Emb(N, M)$ of the standard embedding such that if $f \in W$ then $f(V_i) \subset U_i$. Thus there is a continuous map $W \to \prod_{i \in I} \Emb(V_i, U_i)$. These observations define a continuous map of equaliser diagrams
\begin{diagram}
W \times \Psi^t(M) & \rTo & W \times \prod_{i \in I \coprod J} \Psi^t(U_i) & \pile{\rTo \\ \rTo} & W \times \prod_{(i, j) \in (I \coprod J)^2} \Psi^t(U_i \cap U_j) \\
\dTo[dotted] & & \dTo & & \dTo \\
\Psi^t(N) & \rTo & \prod_{i \in I} \Psi^t(V_i) & \pile{\rTo \\ \rTo} & \prod_{(i, j) \in I^2} \Psi^t(V_i \cap V_j) 
\end{diagram}
and so a continuous map of equalisers. This proves that the map $\Emb(N, M) \times \Psi^t(M) \to \Psi^t(N)$ is continuous on a neighbourhood of any fixed embedding, and so continuous everywhere.
\end{proof}

\begin{lem}
Let $\{ N_i \to M \}_{i \in I}$ be an open cover of $M$. Then
\begin{diagram}
\Psi^t(M) & \rTo & \prod_{i \in I} \Psi^t(N_i) & \pile{\rTo \\ \rTo} & \prod_{(i, j) \in I^2} \Psi^t(N_i \cap N_j)
\end{diagram}
is an equaliser diagram in $\mathbf{Top}$. Thus $\Psi^t$ is a sheaf of topological spaces.
\end{lem}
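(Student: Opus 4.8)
The plan is to leverage two facts already in hand: that $\Psi$ is a sheaf of \emph{sets} on $\mathcal{O}_n$, and that $\Psi^t$ is a \emph{continuous presheaf} by the previous lemma. Write $E \subseteq \prod_{i \in I} \Psi^t(N_i)$ for the equaliser in $\mathbf{Top}$ of the two restriction maps, and let $c \colon \Psi^t(M) \to E$ be the canonical comparison map. Since $\Psi$ is a sheaf of sets, the underlying diagram of sets is an equaliser, so $c$ is a bijection onto $E$; and since each restriction $\Psi^t(M) \to \Psi^t(N_i)$ is continuous, $c$ is continuous. Thus it suffices to produce a continuous inverse, which I will do by exhibiting $E$ as satisfying the universal property that defines the topology on $\Psi^t(M)$.

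First I would choose an open cover $\{U_a\}_{a \in A}$ of $M$ by affine patches which refines $\{N_i\}_{i \in I}$ and, moreover, has the property that for each $i$ the sub-collection $A_i := \{ a \in A : U_a \subseteq N_i \}$ already covers $N_i$; this is arranged by covering each $N_i$ separately by affine patches contained in it and taking the union of these covers over $i$. By the well-definedness lemma, the topology on $\Psi^t(M)$ is the one determined by the cover $\{U_a\}_{a \in A}$, so $\Psi^t(M)$ is the equaliser in $\mathbf{Top}$ of $\prod_{a} \Psi^t(U_a) \rightrightarrows \prod_{a,b} \Psi^t(U_a \cap U_b)$, and likewise each $\Psi^t(N_i)$ is the equaliser determined by $\{U_a\}_{a \in A_i}$. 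For each $a$ fix $i(a) \in I$ with $U_a \subseteq N_{i(a)}$ and let $\phi_a \colon E \to \Psi^t(U_a)$ be the composite $E \to \prod_{i} \Psi^t(N_i) \to \Psi^t(N_{i(a)}) \to \Psi^t(U_a)$, which is continuous by the continuous-presheaf lemma. One then checks that $\phi_a$ and $\phi_b$ agree on $U_a \cap U_b$: a point $(x_i)_i \in E$ is sent by both to the restriction to $U_a \cap U_b$ of $x_{i(a)}$, respectively $x_{i(b)}$, and these agree because $U_a \cap U_b \subseteq N_{i(a)} \cap N_{i(b)}$ and $(x_i)_i$ satisfies the equaliser (cocycle) condition. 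Hence the universal property of $\Psi^t(M)$ as an equaliser in $\mathbf{Top}$ yields a continuous map $\Phi \colon E \to \Psi^t(M)$.

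Finally I would check that $\Phi$ is inverse to $c$: on underlying sets both maps are the canonical comparison maps between the two descriptions of $\Psi(M)$ as a limit of sets (over $\{N_i\}$ and over $\{U_a\}$), and these are mutually inverse because $\Psi$ is a sheaf of sets. As $\Phi$ and $c$ are both continuous, $c$ is therefore a homeomorphism, so the displayed diagram is an equaliser in $\mathbf{Top}$ and $\Psi^t$ is a sheaf of topological spaces. The only step needing genuine care is the choice of refinement so that the affine patches contained in a given $N_i$ still cover $N_i$; this is what allows the single cover $\{U_a\}$ to be used simultaneously for $M$ and for every $N_i$, and once it is in place the rest of the argument is formal, the analytic content having been discharged in the earlier lemmas. (Alternatively, one could mimic the well-definedness lemma and show directly that $c$ is open, by writing a basic open of $\Psi^t(M)$ in terms of the $U_a$ and reassembling it from opens pulled back along the projections to the $\Psi^t(N_i)$; this is essentially the same argument.)
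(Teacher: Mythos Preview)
Your proof is correct and follows essentially the same approach as the paper's. Both arguments cover each $N_i$ by affine patches, assemble these into a single affine cover of $M$, and use that the topology on $\Psi^t(M)$ is by definition the equaliser for this refined cover to produce the continuous inverse to the comparison map; the paper phrases this as a single ``map of equaliser diagrams'' and reads off the dotted arrow, whereas you spell out the maps $\phi_a$ and the cocycle check explicitly, but the content is the same.
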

\begin{proof}
Let $\{ U_j \lra N_i\}_{j \in J_i}$ be a collection of open covers of the $N_i$ by affine patches. This gives a map of equaliser diagrams so a continuous map
\begin{diagram}
\Psi^t(M) & \rTo & eq & \rTo & \prod_{i \in I} \Psi^t(N_i) & \pile{\rTo \\ \rTo} & \prod_{(i, j) \in I^2} \Psi^t(N_i \cap N_j) \\
& & \dTo[dotted] & & \dTo & & \dTo\\
& & \Psi^t(M) & \rTo & \prod_{i \in I} \prod_{\ell \in J_i} \Psi^t(U_\ell) & \pile{\rTo \\ \rTo} & \prod_{(i, j) \in I^2} \prod_{(\ell, k) \in J_i \times J_j} \Psi^t(U_\ell \cap U_k)
\end{diagram}
which exhibits $\Psi^t(M)$ as the equaliser.
\end{proof}

Let us return to the special case $\Psi = \Psi_\theta$. Note that if $M$ has boundary then the restriction map $\Psi_\theta(M) \to \Psi_\theta(\mathring{M})$ is injective, and so we may topologise $\Psi_\theta(M)$ as a subspace of $\Psi_\theta(\mathring{M})$. If $N \subset M$ is an open submanifold there is a commutative diagram of sets
\begin{diagram}
\Psi_\theta(M) & \rTo & \Psi_\theta(N) \\
\dTo & & \dTo \\
\Psi_\theta(\mathring{M}) & \rTo & \Psi_\theta(\mathring{N}) 
\end{diagram}
where the lower map is continuous and the vertical maps are inclusions of subspaces. It follows that the upper map is also continuous, so restriction maps are continuous for manifolds with boundary. Similar one may show that $\Emb(N, M) \times \Psi_\theta(M) \to \Psi_\theta(N)$ is continuous for manifolds with boundary.

\begin{thm}
The functor $\Psi_\theta(-)$ is a continuous sheaf of topological spaces on the site of $n$-manifolds with boundary and open embeddings.
\end{thm}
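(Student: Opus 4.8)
The plan is to reduce the sheaf condition for manifolds with boundary to the case of manifolds without boundary, which has already been proved, everything genuinely topological having been packaged into that case. The continuous-functor part of the assertion is already in hand: it is remarked just above that restriction maps along open embeddings of manifolds with boundary are continuous, and that $\Emb(N,M) \times \Psi_\theta(M) \to \Psi_\theta(N)$ is continuous. So the only thing left to verify is that, for every open cover $\{N_i \to M\}_{i \in I}$ of a manifold $M$ with boundary, the diagram
$$\Psi_\theta(M) \longrightarrow \prod_{i} \Psi_\theta(N_i) \rightrightarrows \prod_{(i,j)} \Psi_\theta(N_i \cap N_j)$$
is an equaliser in $\mathbf{Top}$.

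First I would check that this is an equaliser of underlying sets, i.e. that $\Psi_\theta$ is a sheaf of sets on the site of $n$-manifolds with boundary. This is because membership in $\Psi_\theta$ is a local condition: a subset $X \subseteq \mathring{M}$ is closed in $M$ exactly when each $X \cap N_i$ is closed in $N_i$ (as $\{N_i\}$ is a cover), being a $d$-dimensional submanifold is local, and a $\theta$-structure is a fibrewise-linear bundle map $TX \to \theta^*\gamma_d$, which may be glued from one on each $X \cap N_i$. Write $E \subseteq \prod_i \Psi_\theta(N_i)$ for the equaliser subset with its subspace topology, so the natural map $\Psi_\theta(M) \to E$ is a bijection, and it remains to see that it is a homeomorphism. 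Next I would pass to interiors: since $\mathring{N_i} = N_i \cap \mathring{M}$ and $\mathring{N_i} \cap \mathring{N_j} = \mathring{(N_i \cap N_j)}$, the family $\{\mathring{N_i} \to \mathring{M}\}$ is an open cover of the boundaryless manifold $\mathring{M}$, so the sheaf property already established in the boundaryless case makes
$$\Psi_\theta(\mathring{M}) \longrightarrow \prod_{i} \Psi_\theta(\mathring{N_i}) \rightrightarrows \prod_{(i,j)} \Psi_\theta(\mathring{(N_i \cap N_j)})$$
an equaliser in $\mathbf{Top}$; call the equaliser subset $\mathring{E}$, so $\Psi_\theta(\mathring{M}) \to \mathring{E}$ is a homeomorphism. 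Restriction of sections to interiors gives an injection $E \hookrightarrow \mathring{E}$, and because each $\Psi_\theta(N_i)$ carries by definition the subspace topology from $\Psi_\theta(\mathring{N_i})$, and a product of subspaces is a subspace of the product, $E$ carries the subspace topology from $\mathring{E}$ by transitivity of subspace topologies.

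Finally, the square with vertices $\Psi_\theta(M)$, $E$, $\Psi_\theta(\mathring{M})$, $\mathring{E}$ commutes; its left vertical map is a subspace inclusion by the very definition of the topology on $\Psi_\theta(M)$, its bottom map is a homeomorphism, and its right vertical map is a subspace inclusion by the previous paragraph. Hence the composite $\Psi_\theta(M) \hookrightarrow \Psi_\theta(\mathring{M}) \xrightarrow{\cong} \mathring{E}$ is a homeomorphism onto its image, which by commutativity is exactly $E$; comparing topologies on $E$ shows $\Psi_\theta(M) \to E$ is a homeomorphism, proving the sheaf condition and with it the theorem. I expect the only delicate point to be this bookkeeping with subspace topologies — the identifications $\mathring{N_i} \cap \mathring{N_j} = \mathring{(N_i \cap N_j)}$, the compatibility of "subspace of a product" with "product of subspaces", and transitivity of the subspace topology — rather than anything analytic, since all the real topological content is inherited from the boundaryless case.
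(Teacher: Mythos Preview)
Your proposal is correct and follows essentially the same approach as the paper: both reduce to the boundaryless case by passing to interiors, set up the same commutative square relating the equaliser diagrams for $\{N_i\}$ and $\{\mathring{N_i}\}$, and conclude via subspace-topology bookkeeping that $\Psi_\theta(M) \to E$ is a homeomorphism. Your write-up is slightly more explicit about the set-level sheaf condition and the identifications $\mathring{N_i} = N_i \cap \mathring{M}$, but the argument is structurally identical to the paper's.
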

\begin{proof}
Theorem \ref{thm:SheafExtension} establishes that it is a continuous sheaf on the sub-site of $n$-manifolds \textit{without} boundary, and the observation above shows it is a continuous presheaf on the site of $n$-manifolds with boundary. It remains to establish the sheaf property for covers of $n$-manifolds with boundary. Let $\{ N_i \to M \}_{i \in I}$ be a cover in the site of $n$-manifolds with boundary, so there is a commutative diagram of topological spaces
\begin{diagram}
\Psi_\theta(M) & \rTo^\iota & \prod_{i \in I} \Psi_\theta(N_i) & \pile{\rTo \\ \rTo} & \prod_{(i, j) \in I^2} \Psi_\theta(N_i \cap N_j) \\
\dTo & & \dTo & & \dTo \\
\Psi_\theta(\mathring{M}) & \rTo^{\mathring{\iota}} & \prod_{i \in I} \Psi_\theta(\mathring{N}_i) & \pile{\rTo \\ \rTo} & \prod_{(i, j) \in I^2} \Psi_\theta(\mathring{N}_i \cap \mathring{N}_j)
\end{diagram}
where the vertical maps are inclusions of subspaces, the lower row is an equaliser in $\mathbf{Top}$ and the top row is an equaliser in $\mathbf{Set}$. Certainly $\iota$ is a continuous map: we must show that $\iota$ is a homeomorphism onto its image. The map $\mathring{\iota}$ is a homeomorphism onto its image, and a continuous inverse of it from its image carries $\mathrm{Im}(\iota)$ bijectively onto $\Psi_\theta(M)$, as required.
\end{proof}

\subsection{Covariance}\label{sec:Covariance}
If $N \hookrightarrow M$ is a closed embedding of manifolds with boundary, there is a covariant map
$$\Psi_\theta(N) \lra \Psi_\theta(M)$$
given by treating a smooth submanifold of $N$ (which is closed as a subspace) as a smooth submanifold of $M$ (closed as a subspace).

\subsection{Evaluating \texorpdfstring{$\Psi_\theta(\bR^n)$}{psi}}\label{sec:EvaluatingOnRn}
Recall \cite[Theorem 3.22]{GR-W} that there is a $GL_n(\bR)$-equivariant homotopy equivalence
$$\Psi_\theta(\bR^n) \simeq \Th(\gamma_d^\perp \to \Gr_d^\theta(\bR^n)).$$
The $GL_n(\bR)$-equivariance was not mentioned in that paper, but the proof given can be seen to be equivariant.

\section{Poset models}\label{sec:chapter2:PosetModels}

\begin{defn}
Let $\Psi_{\theta}^{c}(\bR \times M)$ be the colimit of $\Psi_{\theta}(\bR \times K)$ over all $K \subset M$ compact submanifolds (with boundary). In particular, if $M$ is compact $\Psi_{\theta}^{c}(\bR \times M) = \Psi_{\theta}(\bR \times M)$.
\end{defn}

The aim of this section is to identify the homotopy type of the cobordism category $\mathcal{C}_\theta(M)$ in terms of the space $\Psi_\theta^c(\bR \times M)$.
\begin{thm}\label{CategoryWeakEq}
Let $M$ be a smooth manifold, possibly with boundary. There is a weak homotopy equivalence
$$B \mathcal{C}_\theta(M) \simeq \Psi_{\theta}^{c}(\bR \times M).$$
\end{thm}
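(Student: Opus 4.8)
The plan is to follow the strategy of \cite{GMTW}, adapted to the background manifold $M$. The key point is that $B\mathcal{C}_\theta(M)$ should be built out of the space $\Psi^c_\theta(\bR \times M)$ by ``remembering only the slices at finitely many heights'', so I will introduce an intermediate poset-valued sheaf that interpolates between the two. Concretely, define a topological poset $D_\theta(M)$ whose space of objects consists of pairs $(X, a)$ with $a \in \bR$ and $X \in \Psi^c_\theta(\bR \times M)$ such that $a$ is a regular value of the projection $X \to \bR$ (so that $X$ meets $\{a\} \times M$ transversally in a compact $(d-1)$-submanifold with a $\theta$-structure), and where $(X_0, a_0) \leq (X_1, a_1)$ iff $X_0 = X_1$ and $a_0 \leq a_1$. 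The first step is to check that $B D_\theta(M)$ is weakly equivalent to $\Psi^c_\theta(\bR \times M)$: this follows because the projection $D_\theta(M) \to \Psi^c_\theta(\bR \times M)$, $(X,a) \mapsto X$, has the property that the fibre over $X$ is the classifying space of the poset of regular values of $X \to \bR$, which is non-empty and contractible (it is an increasing union of intervals, hence weakly contractible). Making this precise requires a theorem of the form ``a map of topological posets that is a ``quasifibration-like'' surjection with contractible fibres, built from a locally retractile situation, induces an equivalence on classifying spaces'' --- this is exactly the kind of argument run in \S3 of \cite{GMTW} using the fact that near each $X$ one can find regular values in a controlled way.

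The second step is to compare $D_\theta(M)$ with the (nerve of the) cobordism category $\mathcal{C}_\theta(M)$. An element $(X, a_0 \leq a_1 \leq \dots \leq a_k)$ of the $k$-simplices of $ND_\theta(M)$ cuts $X$ along the slabs $[a_{i-1}, a_i] \times M$ into a composable string of $k$ cobordisms, together with germ data at the cutting heights giving the $\theta$-structured objects; conversely a $k$-simplex of $N\mathcal{C}_\theta(M)$ is a string of composable cobordisms which can be glued into a single element of $\Psi^c_\theta(\bR \times M)$ after choosing collars and rescaling the parameter intervals. These two constructions are mutually inverse up to the usual homotopies (re-choosing collars, sliding the cut points, rescaling), so one gets a levelwise weak equivalence of semi-simplicial (or simplicial) spaces and hence $B\mathcal{C}_\theta(M) \simeq BD_\theta(M)$. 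The technical content here is precisely parallel to \cite[\S3--4]{GMTW}: the only new feature is that embeddings take values in $[a_{i-1},a_i] \times \mathring M$ rather than in a Euclidean slab, but since $\Psi_\theta$ has been promoted to a continuous sheaf on all $n$-manifolds with boundary (and the relevant embedding spaces behave well, by the covariance and continuity lemmas already established), the same homotopies go through.

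I expect the main obstacle to be the first step: proving that forgetting down to $D_\theta(M)$ and then to $\Psi^c_\theta(\bR \times M)$ are weak equivalences, because this is where one must control the topology carefully --- the set of regular values is open but its dependence on $X$ is only semicontinuous, so one cannot naively write down a section, and the argument must instead proceed by a local-to-global patching over $\Psi^c_\theta(\bR \times M)$ using that this space is built from the sheaf $\Psi_\theta$ (which is where the non-compact colimit $\Psi^c_\theta$ matters: one works with compact pieces $\bR \times K$ at a time). A subsidiary subtlety is the passage between the semi-simplicial nerve (no degeneracies, which is what the cobordism-category cutting picture naturally produces, since one cannot have $a_{i-1} = a_i$ as genuine regular values) and the full simplicial nerve of $\mathcal{C}_\theta(M)$; one handles this by the standard argument that the inclusion of the semi-simplicial nerve into the simplicial nerve is a weak equivalence after geometric realisation when the space of objects is ``reasonable'', together with a small thickening allowing coincident cut heights. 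Finally, one checks naturality in $M$ along open and closed embeddings so that the equivalence is compatible with the colimit defining $\Psi^c_\theta$; this is formal given the sheaf and covariance properties recalled in \S\ref{sec:chapter2:SpacesOfManifolds}.
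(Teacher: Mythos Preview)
Your proposal is essentially the paper's argument: the paper's poset $D^\pitchfork$ is exactly your $D_\theta(M)$, and the zig-zag $\Psi_\theta^c(\bR\times M) \leftarrow BD^\pitchfork \to B\mathcal{C}_\theta(M)$ is precisely what you describe. Two technical devices in the paper sharpen the points you flag as obstacles. First, the paper topologises $D^\pitchfork$ as a subspace of $\bR^\delta \times \Psi_\theta^c(\bR\times M)$ with the \emph{discrete} topology on the $\bR$-factor; this makes the forgetful functor $u$ \'etale on each nerve level, so a clean lemma (\cite[Lemma~3.4]{galatius-2006}) replaces the ``quasifibration-like'' argument you anticipate and sidesteps the semicontinuity worry entirely. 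Second, rather than going directly from $D^\pitchfork$ to $\mathcal{C}_\theta(M)$, the paper interposes a subposet $D^\perp \subset D^\pitchfork$ of \emph{locally tubular} elements (those with $A$ cylindrical on a neighbourhood of $\{a\}\times M$); the inclusion $D^\perp \hookrightarrow D^\pitchfork$ is a levelwise equivalence by an explicit straightening homotopy, and now cutting a locally tubular element automatically produces a collared cobordism, so the comparison $D^\perp \to \mathcal{C}_\theta(M)$ is a genuine functor with an explicit levelwise homotopy inverse. This second step is where your sketch is thinnest: transversality alone does not give collars, and ``re-choosing collars'' is not available until you have straightened.
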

\begin{cor}
For $M$ compact this reduces to a homotopy equivalence $B \mathcal{C}_\theta(M) \simeq \Psi_{\theta}(\bR \times M)$.
\end{cor}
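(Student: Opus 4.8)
This corollary is a formal consequence of Theorem~\ref{CategoryWeakEq}; the only two things to check are a tautological identification of spaces and the upgrade from a weak to a genuine homotopy equivalence.

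First, suppose $M$ is compact (possibly with boundary). Then $M$ is itself a compact codimension-zero submanifold of $M$, and it is the maximum of the poset of such submanifolds $K \subseteq M$ under inclusion. Since the covariance maps $\Psi_\theta(\bR \times K) \to \Psi_\theta(\bR \times M)$ of \S\ref{sec:Covariance} are continuous, the diagram defining $\Psi_\theta^c(\bR \times M) = \colim_K \Psi_\theta(\bR \times K)$ has a terminal cocone, so $\Psi_\theta^c(\bR \times M) = \Psi_\theta(\bR \times M)$ on the nose --- this is exactly the parenthetical remark in the definition of $\Psi_\theta^c$. Substituting this identity into Theorem~\ref{CategoryWeakEq} already yields a weak homotopy equivalence $B\mathcal{C}_\theta(M) \simeq \Psi_\theta(\bR \times M)$.

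To promote this to an honest homotopy equivalence I would invoke Whitehead's theorem, which reduces the claim to checking that both sides have the homotopy type of a CW complex. For $B\mathcal{C}_\theta(M)$: the object and morphism spaces of $\mathcal{C}_\theta(M)$ are assembled from $C^\infty$ embedding spaces $\Emb(X, \mathring{M})$ and from section spaces $\Bun(-, \theta^*\gamma_d)$, which for $X$ a compact manifold are metrizable absolute neighbourhood retracts and hence of CW homotopy type; this persists after forming the quotients by the $\Diff(X)$- and $\Diff_\partial(W)$-actions used to topologise the category, and the geometric realisation of the nerve of a topological category all of whose object and morphism spaces are of CW homotopy type (with the nerve good, i.e.\ levelwise well-pointed) is again of CW homotopy type. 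For $\Psi_\theta(\bR \times M)$: on an affine patch $U \cong \bR^{n+1}$ one has $\Psi_\theta(U) \simeq \Th(\gamma_d^\perp \to \Gr_d^\theta(\bR^{n+1}))$ by \S\ref{sec:EvaluatingOnRn}, a Thom space over a space of CW homotopy type; using the sheaf property of \S\ref{sec:chapter2:SpacesOfManifolds} --- which presents $\Psi_\theta(\bR \times M)$ as the equaliser of a double-intersection gluing diagram built from such local pieces over a countable good cover of $\bR \times M$ --- one deduces the same for $\Psi_\theta(\bR \times M)$ (alternatively this can be extracted directly from \cite{GR-W}).

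The main obstacle, such as it is, is the very last verification: tracking that the space of $\theta$-manifolds on the non-compact background $\bR \times M$ retains CW homotopy type through the (countable, double-intersection) descent diagram, rather than just each local affine piece. Everything else is entirely formal, and if one is content with a weak homotopy equivalence this step is not needed at all.
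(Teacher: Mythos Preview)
Your first paragraph is exactly the paper's argument in its entirety: the corollary carries no separate proof in the paper, because the identity $\Psi_\theta^c(\bR\times M)=\Psi_\theta(\bR\times M)$ for compact $M$ is already recorded as part of the definition of $\Psi_\theta^c$, and substituting it into Theorem~\ref{CategoryWeakEq} gives the statement.

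Your second and third paragraphs go beyond what the paper does. The paper never separately justifies the word ``homotopy equivalence'' as opposed to ``weak homotopy equivalence''; it is most naturally read as shorthand for the latter (consistent with the phrasing of Theorem~\ref{CategoryWeakEq} and Theorem~\ref{MainTheorem}), or else the CW type of both sides is being taken as understood. Either way, the paper offers no argument here, so there is nothing to compare your Whitehead step against.

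That said, your sketch for the CW type of $\Psi_\theta(\bR\times M)$ has a genuine gap as written: an equaliser in $\mathbf{Top}$ of maps between spaces of CW homotopy type need not itself have CW homotopy type, and the sheaf-gluing diagram of \S\ref{sec:chapter2:SpacesOfManifolds} is an ordinary (not homotopy) equaliser. If you want to make this step honest you would need either a direct argument from the construction in \cite{GR-W} (e.g.\ that $\Psi_\theta$ of any open set is an ANR, or dominated by a CW complex), or to observe that by the $h$-principle results of \S\ref{sec:HPrincipleApplication} the space $\Psi_\theta(\bR\times M)$ is weakly equivalent to a section space $\Gamma(T_\theta^{fib}(TM)\to M)$, which does have CW homotopy type for the usual reasons --- though invoking this is somewhat circular relative to the paper's narrative. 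Your own final sentence correctly flags this as the weak point; the rest is fine.
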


To prove this theorem we introduce several intermediate spaces.

\begin{defn}\label{PosetModelsDefinition}
To ease notation, define the space
$$D := \Psi_{\theta}^{c}(\bR \times M)$$
which we will consider as a topological category with only identity morphisms. Also define
$$D^{\pitchfork} := \{ (a, (A, \ell)) \in \mathbb{R} \times D \, | \, A \pitchfork \{ a \} \times M \}$$
and note it is non-empty by Sard's theorem, and that it is a poset via
$$(a, A, \ell) \leq (b, B, L) \,\,\, \text{if and only if} \,\,\, (A, \ell) = (B, L) \,\,\, \text{and} \,\,\, a \leq b.$$
We topologise $D^\pitchfork$ as a subspace of $\mathbb{R}^\delta \times D$, where $\mathbb{R}^\delta$ is the real line with the discrete topology. We also define a subposet $D^\perp$ of $D^\pitchfork$ of \textit{locally tubular} submanifolds: those elements of $D^\pitchfork$ such that there exists some $\epsilon > 0$ such that
$$A \cap ((a - \epsilon, a + \epsilon) \times M) = (a - \epsilon, a + \epsilon) \times (A \cap \{ a \} \times M)$$
and topologise $D^\perp$ with the subspace topology.
\end{defn}

It is clear that there are continuous functors between these, when we consider the posets as topological categories:
\begin{enumerate}[(i)]
	\item $u : D^\pitchfork \lra D$ that forgets the real value,
	\item $i : D^\perp \lra D^\pitchfork$ that includes the locally tubular submanifolds into the transverse submanifolds,
	\item $c : D^\perp \lra \mathcal{C}_\theta (M)$ which on objects is
$$(a, A, \ell) \mapsto (a, \{ a \} \times M \cap A, \ell \circ I)$$
where $I : \mathbb{R} \oplus T(\{ a \} \times M \cap A) \lra TA$ is the bundle map induced by the restriction map of the tangent bundle of $A$ to the submanifold $\{ a \} \times M \cap A$, where the factor of $\mathbb{R}$ is mapped to $TA$ by $(t, a')$ being sent to the vector of length $t$ at $a' \in A$ in the positive real direction of $\mathbb{R} \times M$. This is tangent to $A$ as $A$ is locally tubular at $a$.

On morphisms the functor $c$ is given by
$$\left \{ (a, A, \ell) \to (b, A, \ell) \right \} \mapsto \left ( a, b, A|_{[a,b]}, \ell|_{[a,b]} \right )$$
Note that the locally tubular property of elements of $D^\perp$ ensures that $A|_{[a,b]}$ has an $\epsilon$-collar, so the map does give an element of the cobordism category.
\end{enumerate}

We claim that each of these functors induces a (weak) homotopy equivalence on the level of classifying spaces, which will immediately prove Theorem \ref{CategoryWeakEq} via the zig-zag of homotopy equivalences
$$\Psi_{\theta}^{c}(M \times \bR) = D \overset{Bu}\lla BD^\pitchfork \overset{Bi}\lla BD^\perp \overset{Bc}\lra B \mathcal{C}_d^\theta(M).$$
In fact more is true: $i$ and $c$ are \textit{homotopy equivalence of categories}, that is, they induce levelwise homotopy equivalences on simplicial nerves. For the first weak equivalence we require the following technical result, from \cite[Lemma 3.4]{galatius-2006}, which we reproduce here.

\begin{lem}\label{etale}
let $\mathcal{C}$ be a topological category and $X$ be a space considered as a topological category with only identity morphisms. Let $f: \mathcal{C} \to X$ be a functor that is \'{e}tale on each of the spaces of objects and morphisms. Suppose that for each $x \in X$ the space $Bf^{-1}(x)$ is contractible. Then $Bf : B\mathcal{C} \to X$ is a weak homotopy equivalence.
\end{lem}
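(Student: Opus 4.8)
The plan is to deduce this from a result of Segal on classifying spaces of topological categories, combined with the étale hypothesis. Write $N_\bullet\mathcal{C}$ for the simplicial space given by the nerve of $\mathcal{C}$, so that $B\mathcal{C} = |N_\bullet\mathcal{C}|$, and similarly $N_\bullet X$ is the constant simplicial space (in each degree) equal to $X$, since $X$ has only identity morphisms. The functor $f$ induces a map of simplicial spaces $N_\bullet f : N_\bullet\mathcal{C} \to N_\bullet X$. The key point is that $f$ being étale on objects and morphisms means it is étale in each simplicial degree: an element of $N_k\mathcal{C}$ is a $k$-tuple of composable morphisms, which is a point in a space built from the spaces of objects and morphisms by iterated pullback along étale maps (source and target), and a composite of étale maps and pullbacks of étale maps is étale. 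Thus each $N_kf : N_k\mathcal{C} \to N_kX = X$ is a local homeomorphism.

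First I would reduce to the fibrewise situation. For $x \in X$, the preimage $f^{-1}(x)$ is a topological category whose object and morphism spaces are $(\text{ob}\,f)^{-1}(x)$ and $(\text{mor}\,f)^{-1}(x)$, discrete spaces since $f$ is étale; so $f^{-1}(x)$ is an ordinary (discrete) category, and $Bf^{-1}(x)$ is its usual classifying space, assumed contractible. Because $N_kf$ is étale, the fibre $(N_kf)^{-1}(x)$ is precisely $N_k(f^{-1}(x))$, i.e. the simplicial space $x \mapsto (N_\bullet f)^{-1}(x)$ has in degree $k$ the discrete set $N_k(f^{-1}(x))$, and its realisation is $Bf^{-1}(x)$.

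The main step, and the one I expect to be the crux, is to promote ``$N_kf$ is étale for all $k$'' plus ``each fibre realises to something contractible'' into ``$|N_\bullet f|$ is a weak equivalence''. I would argue as follows. An étale map is in particular a quasifibration onto its image is too strong in general, but one can use instead that an étale map is an open map with discrete fibres, hence (being a local homeomorphism) a Serre microfibration with discrete fibres, and therefore a Serre fibration; so each $N_kf : N_k\mathcal{C} \to X$ is a Serre fibration with fibre $N_k(f^{-1}(x))$ over $x$. Now realisation of a levelwise-fibration map of simplicial spaces: if $N_\bullet\mathcal{C} \to N_\bullet X$ is a map of simplicial spaces which is a Serre fibration in each degree, and the target is ``good''/Reedy cofibrant enough (here $N_\bullet X$ is the constant-in-values simplicial space on $X$, which is Reedy cofibrant provided $X$ has the homotopy type of a CW complex, and in any case the relevant statement for our simplicial spaces follows from the realisation lemma / the gluing lemma applied to skeleta), then on geometric realisations one gets a map $|N_\bullet\mathcal{C}| \to |N_\bullet X| = X$ whose homotopy fibre over $x$ is $|(N_\bullet f)^{-1}(x)| = Bf^{-1}(x)$. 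Concretely one filters $B\mathcal{C}$ by skeleta of the nerve and checks that each stage $\mathrm{sk}_{k}\,B\mathcal{C} \to X$ is obtained from the previous by a pushout along a map of the form $(\partial\Delta^k \times N_k\mathcal{C}) \cup (\Delta^k \times \mathrm{sk}_{k-1}) \to \Delta^k \times N_k\mathcal{C}$, all of whose building blocks map to $X$ by fibrations with the stated fibres, and applies the gluing lemma for homotopy pushouts together with a colimit argument. Granting this, the homotopy fibre of $Bf$ over every $x \in X$ is $Bf^{-1}(x)$, which is contractible by hypothesis; since $X$ is path-connected-insensitive here (the statement is fibrewise over all of $X$), $Bf$ is a weak homotopy equivalence. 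I would remark that this is exactly the argument of \cite[Lemma 3.4]{galatius-2006}, to which one may simply refer; the only thing worth spelling out is why the étale hypothesis makes every $N_kf$ a fibration with the expected fibre, which is the observation above that an étale map is an open map with discrete fibres and hence a Serre fibration.
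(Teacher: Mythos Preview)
Your overall strategy---show each $N_kf$ is a fibration with fibre $N_k(f^{-1}(x))$, then identify the homotopy fibre of $|N_\bullet f|$ with $|N_\bullet(f^{-1}(x))|=Bf^{-1}(x)$---would be fine if its premise held, but the step ``an \'etale map is \ldots\ a Serre microfibration with discrete fibres, and therefore a Serre fibration'' is false. You are right that a local homeomorphism is a Serre microfibration (compactness of the domain lets one lift for a short time using local inverses), but the passage from microfibration to fibration requires \emph{weakly contractible} fibres, not merely discrete ones. The open inclusion $(0,1)\hookrightarrow\bR$ is \'etale, is a Serre microfibration, and has discrete (indeed empty or one-point) fibres, yet it is not a Serre fibration: a path leaving $(0,1)$ admits no lift. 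More generally, an \'etale map need not even be a quasifibration---in the same example the actual fibre over $2$ is empty while the homotopy fibre is contractible. In the situation of the lemma the fibres of $N_kf$ are typically infinite discrete sets (in the application, the set of regular values of a height function), so there is no hope of upgrading the microfibration to a fibration levelwise, and the skeleton-by-skeleton gluing argument you sketch does not get off the ground.

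The paper does not give its own proof of this lemma; it simply quotes it from \cite[Lemma 3.4]{galatius-2006}. The argument there does not try to make the levelwise maps into fibrations. Instead one exploits that an \'etale space over $X$ is the same thing as a sheaf of sets on $X$, so $N_\bullet\mathcal{C}\to X$ is a simplicial sheaf with stalk $N_\bullet(f^{-1}(x))$ at $x$; concretely, any map from a compact polyhedron $P$ into $X$ can be subdivided finely enough that over each simplex the \'etale spaces $N_k\mathcal{C}$ become trivial (disjoint unions of copies of the simplex), and one then builds lifts into $B\mathcal{C}$ simplex by simplex using the contractibility of the stalk classifying spaces $Bf^{-1}(x)$. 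The \'etaleness is used to produce local sections and to make the category locally constant, not to produce a global fibration.
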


\begin{prop}\label{ForgetfulEquivalence}
The map $Bu : BD^\pitchfork \to D$ is a weak homotopy equivalence.
\end{prop}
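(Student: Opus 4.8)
The plan is to apply Lemma~\ref{etale} to the forgetful functor $u : D^\pitchfork \to D$, regarding $D$ as a topological category with only identity morphisms. This requires two things: (a) that $u$ be \'etale on the space of objects and on the space of morphisms, and (b) that for each $(A, \ell) \in D$ the classifying space $Bu^{-1}(A, \ell)$ of the fibre category be contractible.

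I would dispatch (b) first, as it is clean. The fibre category over $(A, \ell)$ has object set $U_{A} := \{\, a \in \bR : A \pitchfork \{a\} \times M \,\}$ with the discrete topology, and a unique morphism from $a$ to $a'$ precisely when $a \le a'$; thus it is the poset $(U_A, \le)$ viewed as a discrete topological category, and $Bu^{-1}(A,\ell)$ is its order complex $|N_\bullet U_A|$. By Sard's theorem $U_A$ is non-empty, and being a totally ordered set it is directed (the upper bound of a finite subset being its maximum). The nerve of a non-empty directed poset is contractible, so $Bu^{-1}(A, \ell)$ is contractible.

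For (a), fix an object $(a, A, \ell) \in D^\pitchfork$. Since $A \in D = \colim_K \Psi_\theta(\bR \times K)$ is supported in $\bR \times K$ for some compact $K \subseteq M$, the intersection $A \cap (\{a\} \times M)$ is a compact subset of the open set $\bR \times \mathring K$, lying over $a \in \bR$. One then checks, using the topology on $\Psi_\theta$, that the set of $(A', \ell') \in D$ with $A' \pitchfork \{a\} \times M$ is a neighbourhood of $(A, \ell)$ in $D$: a configuration that is $C^\infty$-close to $A$ on a compact slab $[a-\epsilon, a+\epsilon] \times K' \subseteq \bR \times \mathring K$ containing this intersection is again transverse to $\{a\} \times M$ and meets it only in a small perturbation of $A \cap (\{a\} \times M)$. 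Calling such a neighbourhood $\mathcal{U}$, the set $\{a\} \times \mathcal{U}$ is then open in $ob(D^\pitchfork)$ --- here it is essential that $D^\pitchfork$ carries the topology of $\bR^\delta \times D$ with $\bR^\delta$ \emph{discrete} --- and $u$ restricts on it to a homeomorphism onto $\mathcal{U}$. The identical argument applied to a pair $a \le b$, shrinking $\mathcal{U}$ to be transverse to both slices, handles the morphism space. Lemma~\ref{etale} then applies and gives that $Bu$ is a weak homotopy equivalence.

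The fibre computation in (b) and the appeal to Lemma~\ref{etale} are formal; the real work is the \'etale property (a), and the only genuine obstacle there is the topology on $\Psi_\theta$. One must argue carefully that transversality to a slice $\{a\} \times M$ is preserved under the perturbations allowed by the topology, making essential use of the fact that configurations in $D$ are supported in $\bR \times K$ for $K$ compact, so that their intersections with $\{a\} \times M$ are compact and can be controlled by restricting attention to a compact slab of $\bR \times \mathring K$.
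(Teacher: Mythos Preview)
Your proposal is correct and follows essentially the same argument as the paper: both apply Lemma~\ref{etale} by checking that $u$ is \'etale on each $N_k$ (using that $\bR^\delta$ is discrete and that transversality to $\{a\}\times M$ is an open condition on $D$) and that the fibre over $(A,\ell)$ is the discrete, non-empty, totally ordered set of regular values of the height function, whose nerve is contractible. You have supplied more detail on the openness of transversality, invoking the compact support in $\bR\times K$, which the paper leaves implicit.
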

\begin{proof}
Note that $N_k u : N_k D^\pitchfork \lra D$ is \'{e}tale, as $\mathbb{R}$ has been given the discrete topology and if $A \pitchfork \{ a \} \times M$ then all $(A', \ell')$ in some open neighbourhood of $(A, \ell)$ are also transverse to $\{a\} \times M$. The preimage of $(A, \ell)$ is the subset of $ a \in \mathbb{R}^\delta$ such that $A \pitchfork \{ a \} \times M$, so is non-empty by Sard's theorem, discrete and totally-ordered. Hence its classifying space is contractible. We may then apply Lemma \ref{etale}, and the result follows.
\end{proof}

\begin{prop}
The functor $i : D^\perp \to D^\pitchfork$ is a homotopy equivalence of categories.
\end{prop}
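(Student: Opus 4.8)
The plan is to establish the stronger assertion that $N_k i\colon N_k D^{\perp}\to N_k D^{\pitchfork}$ is a weak homotopy equivalence for every $k$. Since $\bR$ carries the discrete topology, both nerves split as disjoint unions over chains $\bar a=(a_0\le\cdots\le a_k)$ of reals; writing $D^{\pitchfork}_k(\bar a)\subseteq D$ for the subspace of $(A,\ell)$ with $A\pitchfork\{a_i\}\times M$ for all $i$, and $D^{\perp}_k(\bar a)$ for those furthermore locally tubular at every $a_i$, it is enough to prove each inclusion $D^{\perp}_k(\bar a)\hookrightarrow D^{\pitchfork}_k(\bar a)$ is a weak equivalence. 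As $D=\Psi^c_\theta(\bR\times M)=\colim_K\Psi_\theta(\bR\times K)$ is a filtered colimit along closed inclusions compatible with these subspaces, I would work inside a fixed $\Psi_\theta(\bR\times K)$ with $K\subseteq M$ compact and pass to the colimit over $K$ at the end. Letting $a_{(1)}<\cdots<a_{(m)}$ be the distinct entries of $\bar a$ and fixing $\epsilon_0$ below half the least gap, the two conditions only constrain $A$ inside the disjoint slabs $(a_{(j)}-\epsilon_0,a_{(j)}+\epsilon_0)\times M$, and all modifications below will be supported there.

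The central construction is a \emph{straightening} of a transverse manifold near a slice $\{a\}\times M$. If $A\pitchfork\{a\}\times M$ then $X_A:=A\cap(\{a\}\times M)$ is a closed submanifold of $A$ near which the projection to $\bR$ is a submersion, so for $|t|$ sufficiently small (depending on $A$) the level sets $X_A^t:=A\cap(\{a+t\}\times M)\subseteq M$ form a smooth isotopy with $X_A^0=X_A$. A choice of straightening datum at $a$ — a collar of $X_A$ in $A$, a compactly supported ambient isotopy $\psi_\tau$ of $M$ realising $\tau\mapsto X_A^\tau$, and a cut-off confining matters to the slab — produces a diffeomorphism $\Xi$ of $\bR\times M$, supported in the slab, fixing $\{a\}\times M$ pointwise, such that $\Xi\cdot(A,\ell)$ is locally tubular at $a$; moreover $\Xi$ comes with a distinguished isotopy $\Xi^{(s)}$ to the identity through diffeomorphisms fixing $\{a\}\times M$ (hence preserving transversality there), and if $A$ is already locally tubular at $a$ one may arrange $\Xi=\mathrm{id}$. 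Performing such a choice simultaneously at all $a_{(j)}$, I would form the space $E$ of pairs $((A,\ell),\mathbf d)$ with $(A,\ell)\in D^{\pitchfork}_k(\bar a)$ and $\mathbf d$ a straightening datum, the forgetful map $p\colon E\to D^{\pitchfork}_k(\bar a)$, and the map $q\colon E\to D^{\perp}_k(\bar a)$ sending $((A,\ell),\mathbf d)$ to $\Xi_{\mathbf d}\cdot(A,\ell)$.

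The proof then assembles as follows. The map $p$ is a microfibration, and its fibre over $(A,\ell)$ is the space of straightening data of the fixed transverse $A$, which is non-empty and contractible, being built from spaces of collars, of isotopy extensions, and of cut-off functions, each contractible by standard arguments; hence $p$ is a weak homotopy equivalence. The map $q$ is a homotopy equivalence: it has the homotopy section $\sigma\colon(A',\ell')\mapsto((A',\ell'),\mathbf d_0)$ for any continuous choice $\mathbf d_0$ of datum, and the required homotopies are supplied by the interpolation $s\mapsto(\Xi^{(s)}_{\mathbf d}\cdot(A,\ell),\,\mathbf d^{(s)})$, where $\mathbf d^{(s)}$ is the datum transported along $\Xi^{(s)}_{\mathbf d}$, together with the contractibility of the spaces of data. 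The same interpolation shows $i\circ q\simeq p$, so $i\simeq i\circ q\circ\sigma=p\circ\sigma$ is a composite of a weak equivalence and a homotopy equivalence, hence a weak equivalence. Passing to the colimit over $K$ and the disjoint union over $\bar a$, and ranging over all $k$, this shows $i$ induces levelwise weak equivalences of nerves, i.e.\ is a homotopy equivalence of categories.

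The hardest part will be the parametrised input asserted in the previous paragraph: that straightening data can be chosen locally over $D^{\pitchfork}_k(\bar a)$ and lifted along homotopies, so that $p$ is a microfibration with contractible fibres. This rests on the isotopy extension theorem and the contractibility of spaces of collars, but must be carried out carefully in the $C^\infty$- and $\Psi_\theta$-topologies (in particular handling the $\epsilon$-dependence hidden in the definition of ``locally tubular''); by comparison, the bookkeeping for the filtered colimit $\Psi^c_\theta$ when $M$ is non-compact is routine.
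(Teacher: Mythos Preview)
Your outline is correct and would go through, but it takes a genuinely different route from the paper. The paper simply invokes the analogue of \cite[Lemma~3.4]{GR-W}, which supplies a \emph{canonical} straightening: given $A\pitchfork\{a\}\times M$, one stretches the $\bR$-coordinate near $a$ by an explicit family of self-maps of $\bR$ (constant in the $M$-direction) so that the preimage of $A$ becomes cylindrical over $X_A$ in a small slab, with the $\theta$-structure transported tautologically. This formula depends continuously on $(A,\ell)$ in the $\Psi_\theta$-topology and requires no auxiliary choices, so it directly furnishes a levelwise deformation retraction $N_kD^{\pitchfork}\to N_kD^{\perp}$. Your approach instead moves $A$ by an \emph{ambient} diffeomorphism of $\bR\times M$ built from isotopy extension, and handles the lack of canonicity by passing to the total space $E$ of straightening data and arguing that $p\colon E\to D^{\pitchfork}_k(\bar a)$ is a microfibration with contractible fibres. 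This is sound, and your identification of the delicate point---verifying the microfibration property in the $\Psi_\theta$-topology, with its attendant $\epsilon$-bookkeeping---is accurate. The trade-off is that the paper's method is shorter and more explicit (the straightening acts only in the $\bR$-direction, so continuity in $\Psi_\theta$ is almost immediate), whereas your method is the standard ``contractible space of choices'' template, which is more robust in situations where no canonical formula is available but here incurs the overhead of the isotopy extension theorem and the microfibration check.
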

\begin{proof}
This follows by the direct analogue of \cite[Lemma 3.4]{GR-W} to $\bR \times M$ instead of $\bR^n$, whose proof still goes through in this setting. This gives a canonical way of straightening submanifolds near to a regular value of the height function.
\end{proof}

\begin{prop}
The functor $c : D^\perp \to \mathcal{C}_\theta (M)$ is a homotopy equivalence of categories.
\end{prop}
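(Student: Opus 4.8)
The plan is to show that $c : D^\perp \to \mathcal{C}_\theta(M)$ induces a levelwise homotopy equivalence on nerves, i.e.\ that $N_k c : N_k D^\perp \to N_k \mathcal{C}_\theta(M)$ is a homotopy equivalence for every $k \ge 0$. Since both sides are homotopy colimits over $\Delta^{op}$ of these levels, this suffices.

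First I would analyse the case $k=0$, i.e.\ the map on object spaces $D^\perp \to ob(\mathcal{C}_\theta(M))$. An object of $D^\perp$ is a height $a$ together with a locally tubular $\theta$-submanifold $A \subset \bR \times M$ which near the slice $\{a\} \times M$ is a product $(a-\epsilon, a+\epsilon) \times X$ for $X = A \cap (\{a\}\times M)$. The functor $c$ remembers only this slice $(a, X, \ell|_X)$. The claim is that the space of locally tubular germs at height $a$ with prescribed slice is contractible: given a compact $(d-1)$-submanifold $X \subset \mathring M$ with $\theta$-structure, the locally tubular extensions over a neighbourhood of $\{a\}\times M$ are parametrised by the collar size $\epsilon$ and by the choice of how the $\theta$-structure $\bR \oplus TX \to \theta^*\gamma_d$ on the product collar extends --- but by construction $c$ already fixes the $\theta$-structure on the collar as the pullback, so the fibre is essentially a space of positive reals $\epsilon$ together with contractible bundle data, hence contractible. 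I would make this precise by writing down an explicit deformation retraction of $D^\perp$ onto the subspace of "standard-size collar" objects, and then identifying the latter with $ob(\mathcal{C}_\theta(M))$ by a homeomorphism, using the colimit-over-$\epsilon$ definition of the embedding and diffeomorphism spaces in \S\ref{sec:chapter2:categorydefinition} together with the colimit-over-compacts definition of $D = \Psi_\theta^c(\bR\times M)$; compactly supported sections on the $\mathcal{C}_\theta$ side correspond to compact submanifolds of $\bR \times K$ on the $\Psi_\theta$ side.

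For general $k$, an element of $N_k D^\perp$ is a single locally tubular $A$ together with heights $a_0 \le \dots \le a_k$, each a "locally tubular point" of $A$; $c$ sends this to the cobordism $A|_{[a_0,a_k]}$ cut into the $k$ composable pieces $A|_{[a_{i-1},a_i]}$, remembering only the combinatorial/embedded data of those pieces. I would exhibit the homotopy inverse by a thickening/gluing construction: given a composable string of cobordisms in $\mathcal{C}_\theta(M)$ with collar sizes, glue them along their collars to produce a submanifold of $[a_0, a_k] \times \mathring M$, and then extend it to all of $\bR \times M$ using product ends (this uses the collar conditions (i)--(iii) in the definition of morphisms, which guarantee the pieces glue smoothly). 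One checks this is continuous, that $c$ composed with it is the identity, and that the composite the other way is homotopic to the identity by the same collar-straightening and collar-rescaling homotopies used for $k=0$ --- sliding the heights to their standard positions and rescaling all collars to a common size. The compatibility of these homotopies with the simplicial structure maps (face and degeneracy) is what upgrades the argument to a levelwise equivalence of nerves.

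The main obstacle I expect is the $k=0$ step done carefully: disentangling the topology on $ob(\mathcal{C}_\theta(M))$ (built as a quotient $\Emb(X,\mathring M) \times_{\Diff(X)} \Bun(\bR\oplus TX,\theta^*\gamma_d)$, summed over diffeomorphism types, and with the $\colim_{\epsilon\to 0}$ in the morphism version) from the subspace topology on $D^\perp \subset \bR^\delta \times \Psi_\theta^c(\bR\times M)$, and verifying that the bijection between "standard collar" locally tubular submanifolds and objects of $\mathcal{C}_\theta(M)$ is a homeomorphism rather than merely a continuous bijection. Once the topologies are matched at level $0$, the higher levels and the homotopies are a routine --- if bookkeeping-heavy --- elaboration using collars, and the simplicial compatibilities follow formally. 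This is the analogue of the corresponding step in \cite{GMTW} and \cite{galatius-2006}, and I would follow those arguments closely.
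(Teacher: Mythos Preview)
Your overall strategy --- construct a section $r_k : N_k\mathcal{C}_\theta(M) \to N_k D^\perp$ by gluing the composable cobordisms and extending cylindrically to $\pm\infty$, observe $N_k c \circ r_k = \mathrm{Id}$, and then exhibit a homotopy $r_k \circ N_k c \simeq \mathrm{Id}$ --- is exactly the paper's approach. But your description of the homotopy, and of the $k=0$ fibre, misidentifies what is actually going on.

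An element of $D^\perp$ is not a germ near the slice: it is a \emph{full} $\theta$-submanifold $A \subset \bR \times M$, together with a height $a$ at which $A$ happens to be locally tubular. So the fibre of $N_0 c$ over an object $(a, X, \ell)$ is not ``a collar size $\epsilon$ plus contractible bundle data''; it is the entire space of $d$-dimensional $\theta$-submanifolds of $\bR \times M$ which are locally tubular at $a$ with slice $X$ there, and which can be completely arbitrary elsewhere. This fibre is contractible, but the contraction is the nontrivial step: one must push the part of $A$ lying outside a neighbourhood of $\{a\}\times M$ off to $\pm\infty$ and replace it by the cylinder on $X$. The same push-to-infinity homotopy, applied to the region outside $[a_0,a_k]$, is what gives $r_k \circ N_k c \simeq \mathrm{Id}$ for all $k$; the locally tubular hypothesis is exactly what makes the pushed manifold remain smooth (and lets one extend the $\theta$-structure over the new cylindrical part). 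This is not a ``collar-rescaling'' homotopy, and your $k=0$ discussion does not supply it.

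Relatedly, your identified ``main obstacle'' --- matching the quotient topology on $ob(\mathcal{C}_\theta(M))$ against the subspace topology on $D^\perp$ --- is a red herring. The paper does not attempt to produce a homeomorphism onto a subspace; it writes down $r_k$ and the push-to-infinity homotopy as explicit continuous maps, treating all $k$ uniformly, and never needs to compare topologies directly. Once you write the correct homotopy, the topology worries evaporate.
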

\begin{proof}
We can define a map $r_k : N_k \mathcal{C}_\theta(M) \to N_k D^\perp$ that takes a $k$-fold morphism $(a_i, a_{i+1}, A_i, \ell_i)$ for $1 \leq i \leq k$ to the submanifold $A$ of $\mathbb{R} \times M$ given by $\bigcup A_i$, and extended to the left of $\{ a_1 \} \times M$ by $(-\infty, a_1] \times (\{ a_1 \} \times M \cap A_1)$, and to the right of $\{ a_{k+1} \} \times M$ by $[a_{k+1}, \infty) \times (\{ a_{k+1} \} \times M \cap A_k)$. As these are composable morphisms in the category, the $\ell_i$ agree where both are defined and so we can define a bundle map $\ell$ over the whole submanifold, extending to the left and right constantly. As morphisms in $\mathcal{C}_\theta(M)$ are collared, $A$ is locally tubular at the points $a_i$, and so $(a_1,..., a_{k+1}, A, \ell)$ defines an element in $N_kD^\perp$.

It is clear that $N_k c \circ r_k = \mathrm{Id}$, but the reverse composition is not the identity. Instead, it takes (forgetting about $\ell$ for a moment) a submanifold $A$ and $(k+1)$ points $a_i$ where it is transverse, and returns a manifold $A'$ that is identical to A in between $a_1$ and $a_{k+1}$ but extended constantly outside of it. This map is, however, homotopic to the identity.

\begin{figure}[h]
\centering
\includegraphics[bb = 114 614 326 773]{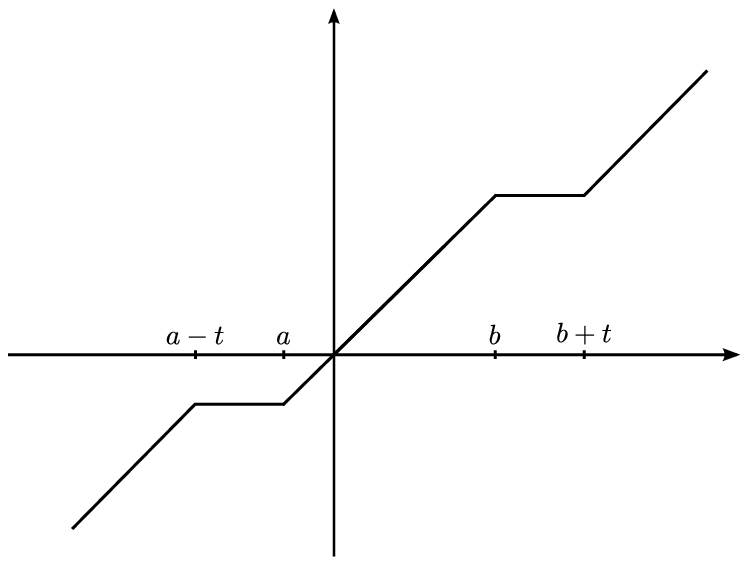}
\end{figure}

Let us write $x_1 : \bR \times M \to \bR$ for the projection. Let $\varphi_t^{a, b} : \mathbb{R} \times M \lra \mathbb{R} \times M$ be the function as in the diagram on the first factor and the identity on the second. Let $(a_1, ..., a_{k+1}, A, \ell) \in D^\perp$ and note that for each $t$, even though $\varphi_t^{a_1,a_{k+1}}$ is neither smooth nor invertible, $A_t := (\varphi_t^{a_1,a_{k+1}})^{-1}(A)$ is again a smooth manifold as near $x_1^{-1}(a_1)$ and $x_1^{-1}(a_{k+1})$ the manifold is locally tubular. It is identical to $A$ inside $x_1^{-1}[a_1, a_{k+1}]$, agrees with translated copies of $x_1^{-1}((-\infty, a_1) \coprod (a_{k+1}, \infty)) \cap A$ inside of $x_1^{-1}((-\infty, a_1-t) \coprod (a_{k+1}+t, \infty))$, and is  tubular in $x_1^{-1}((a_1-t, a_1) \coprod (a_{k+1},a_{k+1}+t))$.

We may define a $\theta$-structure $\ell_t : TA_t \to \theta^*\gamma_d$ to be that of $A$ inside $x_1^{-1}((-\infty, a_1-t) \coprod (a_1, a_{k+1}) \coprod (a_{k+1}+t, \infty))$, where there is a canonical embedding of this part of $A_t$ into $A$. On $x_1^{-1}[a_1-t, a_1]$ (resp. $x_1^{-1}[a_{k+1}, a_{k+1}+t]$) the manifold $A_t$ is tubular and so agrees with $\bR \times (x_1^{-1}(a_1) \cap A)$ (resp. $\bR \times (x_1^{-1}(a_{k+1}) \cap A)$), which has a canonical $\theta$-structure induced by that of $A$. We may use this to put a $\theta$-structure on $A_t$ in $x_1^{-1}[a_1-t, a_1]$ (resp. $x_1^{-1}[a_{k+1}, a_{k+1}+t]$) that is compatible with the one already defined on the remainder.

We may define a homotopy
\begin{eqnarray*}
H :  [0, \infty] \times N_k D^\perp & \lra & N_k D^\perp \\ 
(t, a_1,...,a_{k+1}, A, \ell) & \mapsto & (a_1,...,a_{k+1}, A_t, \ell_t) \quad \text{if} \,\,\, t < \infty \\
(\infty, a_1,...,a_{k+1}, A, \ell) & \mapsto & r_k \circ N_kc(a_1,...,a_{k+1}, A, \ell)
\end{eqnarray*}
which can easily be seen to be continuous, as it is eventually constant inside any compact subset of $\bR \times M$. It is a homotopy from the identity map on $D^\perp$ to $r_k \circ N_kc$, as required.
\end{proof}

\section{Micro-flexibility of the sheaf \texorpdfstring{$\Psi_\theta(-)$}{of submanifolds}}\label{sec:chapter2:Micro-flexibility}

Let $\Psi : \mathcal{O}(M) \to \mathbf{Top}$ be a sheaf of topological spaces defined on a manifold $M$. If $C \subset M$ is a closed subset, one usually defines the value of the sheaf $\Psi$ on $C$ as the colimit of its values on open neighbourhoods of $C$, $\Psi(C) := \colim_{U \supset C} \Psi(U)$. However if the sheaf is one of topological spaces, the space so obtained tends to be pathological. One can develop the theory of \textit{quasi-topological spaces} (a generalisation of the category $\mathbf{Top}$ having better behaved colimits) to circumvent these problems, but for our purpose it is simpler to spell things out. The approach to this theory via quasi-topological spaces has been worked out in detail by Ayala \cite{Ayala}.

\begin{defn}
A sheaf $\Psi$ of topological spaces on a manifold $M$ is said to be \textit{micro-flexible} if for every inclusion $C' \subset C$ of compact subspaces of $M$ and each pair $U' \subset U$ of open sets containing $C'$, $C$ respectively, every lifting problem
\begin{diagram}
& & \{0\} \times P & \rTo & \Psi(U \supset C) \\
& \ldTo & \dTo & \ruTo(4,2)[dotted] & \dTo \\
[0, \epsilon] \times P & \rTo &[0,1] \times P & \rTo & \Psi(U' \supset C')
\end{diagram}
with $P$ a compact polyhedron has a solution for some $\epsilon > 0$, after perhaps passing to a smaller pair $\tilde{U}' \subset \tilde{U}$ still containing $C$ and $C'$ respectively.
\end{defn}

\begin{rem}
Once the theory has been developed, this is no more than saying that for inclusions of compact sets the restriction maps $\Psi(C) \to \Psi(C')$ are Serre micro-fibrations of quasi-topological spaces.
\end{rem}

In order to show $\Psi_\theta$ is micro-flexible, we first recall some of the properties of the spaces $\Psi_\theta(U)$ proved in \cite{GR-W}, in particular the smooth structure on these spaces.

\begin{defn}
Let $X$ be a smooth manifold, $f : X \to \Psi_\theta(U)$ a continuous map, and write $f(x)=(M_x, \ell_x)$. There is a subset $\Gamma_f \subset X \times U$ called the \textit{graph of $f$} defined by $\cup_{x \in X} \{x\} \times M_x \subset X \times U$. It has a \textit{vertical tangent bundle} $T^v \Gamma_f$ defined by $\cup_{x \in X} \{x\} \times TM_x \subset X \times TU$. The maps $\ell_x$ determine a bundle map $T^v\Gamma_f \to \theta^*\gamma_2$.

Let us say \textit{$f$ is smooth} if $\Gamma_f$ is a smooth submanifold of $X \times U$ and the projection $\pi_1 : \Gamma_f \to X$ is a submersion. More generally, say \textit{$f$ is smooth near $(x,u)$} if there are neighbourhoods $A \ni x$ and $B \ni u$ such that
$$A \lra X \lra \Psi_\theta(U) \lra \Psi_\theta(B)$$
is smooth. Say \textit{$f$ is smooth near a closed set} $C \subset X \times U$ if it is smooth near each point of $C$.
\end{defn}

It is not difficult to show that the set of smooth maps $X^k \to \Psi_\theta(U^n)$ is in bijection with the set of submersions $\pi_1 : \Gamma^{k+d} \subset X \times U \to X$ with a $\theta$-structure on the vertical tangent bundle \cite[Lemma 2.16]{GR-W}.

Furthermore, given a continuous map $f : X \to \Psi_\theta(U)$, an open $V \subset X \times U$ and an open $W \subset X \times U$ such that $\bar{V} \subset \mathring{W}$, there is a homotopy starting at $f$ which is supported in $W$ and is smooth on $(0,1] \times V$. If $f$ is already smooth on an open set $A \subset X$ the homotopy may be assumed to be smooth on $[0,1] \times A$ \cite[Lemma 2.17]{GR-W}.

\vspace{2ex}

Let $\tau: X \times U \lra [ 0,1 ]$ be a smooth function, and $F_\tau : [ 0,1 ] \times X \times U \lra [ 0,1 ] \times X \times U$ be given by $(t, x, u) \mapsto (t \tau(x,u), x, u)$. If $f : [0,1] \times X \lra \Psi_\theta(U)$ is a homotopy, then $\Gamma_f \subseteq [0,1] \times X \times U$. The following lemma gives a criterion for the set $\Gamma := F_\tau^{-1}(\Gamma_f)$ to be the graph $\Gamma_{f_\tau}$ of some continuous map $f_\tau$.

\begin{lem}\label{Deformation}
With the notation above, if either of the following hold
\begin{enumerate}[(i)]
	\item $\tau(x, u)$ is independent of $u$, 
	\item $f$ is smooth and $F_\tau |_{\{ t \} \times X \times U}$ is transverse to $\Gamma_f$ for all $t$,
\end{enumerate}
then $\Gamma = \Gamma_{f_\tau}$ for a continuous map $f_\tau$.
\end{lem}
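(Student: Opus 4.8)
The plan is to argue locally: being the graph of a continuous map is equivalent to the projection $\pi : \Gamma \to [0,1] \times X$ being a bijection onto a subset on which $\Gamma$ varies continuously, i.e.\ $\Gamma$ is of the form $\bigcup_{(t,x)} \{(t,x)\} \times M_{(t,x)}$ with $M_{(t,x)}$ a closed $d$-submanifold of $U$ and the $\theta$-structure varying continuously. Since $F_\tau$ is the identity in the $X$ and $U$ coordinates, $F_\tau^{-1}(\Gamma_f)$ over a point $(t,x)$ consists of those $u \in U$ with $(t\tau(x,u), x, u) \in \Gamma_f$, that is $u \in M_{(t\tau(x,u),x)}$ where $f(s,x) = (M_{(s,x)}, \ell_{(s,x)})$. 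So the issue is purely whether this implicit description cuts out, for each fixed $(t,x)$, a smooth closed $d$-submanifold of $U$ depending continuously on $(t,x)$, together with the inherited $\theta$-structure (pulled back along $F_\tau$, which is a diffeomorphism in the $u$-directions once $t$ and $x$ are fixed and $\tau(x,\cdot)$ is suitably controlled).

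For case (i): if $\tau(x,u) = \tau(x)$ is independent of $u$, then for fixed $(t,x)$ the map $F_\tau$ restricted to $\{t\}\times\{x\}\times U$ is simply $(u) \mapsto (t\tau(x), x, u)$, so $M_{(t,x)} = M_{(t\tau(x), x)}$ literally as a subset of $U$, with $\theta$-structure $\ell_{(t\tau(x),x)}$. This is manifestly a smooth closed $d$-submanifold with $\theta$-structure, and it depends continuously on $(t,x)$ because $f$ is continuous and $(t,x)\mapsto (t\tau(x),x)$ is continuous; hence $\Gamma = \Gamma_{f_\tau}$ for the continuous map $f_\tau(t,x) := f(t\tau(x),x)$. (One should note that $\Gamma$ is closed in $[0,1]\times X\times U$ since it is the preimage of the closed set $\Gamma_f$ under the continuous map $F_\tau$, so the submanifolds $M_{(t,x)}$ are indeed closed in $U$.)

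For case (ii): here I would use the transversality hypothesis together with smoothness of $f$. Since $f$ is smooth, $\Gamma_f \subset [0,1]\times X \times U$ is a smooth submanifold of codimension $d$ (the projection to $[0,1]\times X$ being a submersion). The map $F_\tau$ is smooth, and its restriction to each slice $\{t\}\times X \times U$ is assumed transverse to $\Gamma_f$, so $\Gamma \cap (\{t\}\times X\times U) = (F_\tau|_{\{t\}\times X\times U})^{-1}(\Gamma_f)$ is a smooth submanifold of $\{t\}\times X \times U$ of codimension $d$. Restricting further to a fixed $x$ (using that $\pi_1 : \Gamma_f \to X$ is a submersion, so that the slice over $x$ is again transverse after a standard argument combining the two transversality statements), we get that $M_{(t,x)} := \Gamma \cap (\{t\}\times\{x\}\times U)$ is a smooth closed $d$-submanifold of $U$. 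Continuity of $(t,x) \mapsto M_{(t,x)}$ as a point of $\Psi_\theta(U)$ follows from the characterisation of smooth maps into $\Psi_\theta(U)$ in terms of submersions with $\theta$-structure on the vertical tangent bundle (\cite[Lemma 2.16]{GR-W}): the composite submersion $\Gamma \to [0,1]\times X$ carries the $\theta$-structure pulled back from $\Gamma_f$ via the diffeomorphism (in the appropriate directions) $F_\tau$, and this data is exactly what a continuous — indeed smooth — map $[0,1]\times X \to \Psi_\theta(U)$ amounts to. So $\Gamma = \Gamma_{f_\tau}$ with $f_\tau$ this (smooth) map.

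The main obstacle is case (ii): one has to be careful that the two transversality facts (transversality of $F_\tau|_{\{t\}\times X \times U}$ to $\Gamma_f$, and submersivity of $\pi_1 : \Gamma_f \to X$) combine to give that each individual fibre $M_{(t,x)}$ is cut out transversally, so that $\Gamma \to [0,1]\times X$ is a genuine submersion with $d$-dimensional fibres; and then that the $\theta$-structure really descends through $F_\tau$ — here one uses that for fixed $t$ the map $u \mapsto F_\tau(t,x,u)$ has derivative in the $u$-directions that is an isomorphism onto the $u$-directions up to the shear coming from $\partial_u \tau$, which is an isomorphism onto the tangent space of $U$, so pulling back $\ell$ along it is legitimate. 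Case (i) is comparatively immediate.
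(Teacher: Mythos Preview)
Your approach matches the paper's: case (i) via the explicit reparametrisation $f_\tau(t,x)=f(t\sigma(x),x)$, and case (ii) by arguing that $\Gamma$ is smooth with $\Gamma\to[0,1]\times X$ a submersion, then invoking \cite[Lemma 2.16]{GR-W}. Case (i) is fine (minor slip: $\Gamma_f$ has codimension $n-d$, not $d$). In case (ii) you correctly isolate the delicate step --- combining transversality of $F_\tau|_{\{t\}\times X\times U}$ to $\Gamma_f$ with submersivity of $\Gamma_f\to[0,1]\times X$ to conclude that each fibre $M_{(t,x)}$ is cut out transversally --- but you do not verify it, and in fact it does \emph{not} follow from hypothesis (ii) as stated. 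Take $d=0$, $X=U=\bR$, $f(s,x)=\{s+x\}$ (so $\Gamma_f$ is the plane $u=s+x$), and $\tau(x,u)=\rho(u)$ for any smooth $\rho:\bR\to[0,1]$ with $\rho'(u_0)=2$ at some point $u_0$. Since $(0,1,0)\in\mathrm{Im}\,D(F_\tau|_{\{t\}})$ and $(0,1,1)\in T\Gamma_f$, their span together with $(1,0,1)\in T\Gamma_f$ is all of $\bR^3$, so hypothesis (ii) holds for every $t$. Yet $\Gamma=\{u=t\rho(u)+x\}$ has defining differential $(1-t\rho'(u))\,du-\rho(u)\,dt-dx$, whose $du$-coefficient vanishes at $(t,u)=(\tfrac12,u_0)$; there $\Gamma\to[0,1]\times X$ acquires a fold and the assignment $(t,x)\mapsto M_{(t,x)}$ is not continuous in $\Psi_0(\bR)$ (two points of a $0$-manifold collide). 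The paper's terse justification (``$F_\tau:\Gamma_f\to\Gamma$ is a diffeomorphism over $[0,1]\times X$'') has the same gap.

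The repair --- and all that the sole application in Proposition~\ref{micro} needs --- is to require transversality of $F_\tau|_{\{t\}\times\{x\}\times U}$ to $\Gamma_f$ for every pair $(t,x)$, not just every $t$. This is automatic at $t=0$ (the differential is then the inclusion of $T_uU$, which is complementary to $T\Gamma_f$ precisely because $\Gamma_f\to[0,1]\times X$ is a submersion) and hence holds for small $t$ by openness. Under this stronger hypothesis your argument is complete: given $(\dot t,\dot x)$, the vector $(\tau\dot t+t\,\partial_x\tau\cdot\dot x,\,\dot x,\,0)$ lies in $T\Gamma_f+\mathrm{Im}\,D(F_\tau|_{\{t\}\times\{x\}\times U})$, so can be written as $v-w$ with $v\in T\Gamma_f$ and $w=(t\,\partial_u\tau\cdot\dot u,0,\dot u)$; then $(\dot t,\dot x,\dot u)\in T\Gamma$, giving the submersion, and \cite[Lemma 2.16]{GR-W} applies.
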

\begin{proof}
In case (i), let $\tau(x, u) = \sigma(x)$. Define $f_\tau$ as
$$[0,1] \times X \lra [0,1] \times X \overset{f} \lra \Psi_\theta(U)$$
where the first map is $(t,x) \mapsto (\sigma(x)t, x)$. Let $(t,x,u) \in \Gamma$, so $(t \sigma(x), x, u) \in \Gamma_f$, so $u \in f(t\sigma(x), x)$. Thus $u \in f_\tau(t, x)$, so $(t,x,u) \in \Gamma_{f_\tau}$. It is easy to see that the reverse inclusion also holds.

In case (ii), by the transversality hypothesis $F_\tau^{-1}(\Gamma_f) = \Gamma$ is a smooth manifold. $F_\tau : \Gamma_f \lra \Gamma$ is a diffeomorphism over $[0,1] \times X$ and so $\Gamma \lra [0,1] \times X$ is also a submersion. By \cite[Lemma 2.16]{GR-W} $\Gamma = \Gamma_{f_\tau}$ for some continuous map $f_\tau : [0,1] \times X \to \Psi_\theta(U)$.
\end{proof}

Note that it is only necessary that one of these is satisfied on some neighbourhood of each point, to prove continuity at that point. The following proposition (analogous to \cite[Proposition 4.14]{galatius-2006}) is the main technical tool required to show micro-flexibility. It says that given a compact $K \subset U$ and a homotopy $f_t : P \to \Psi_\theta(U)$, there is another homotopy $g_t$ defined for $t \in [0, \epsilon)$ starting from the same map, equal to $f_t$ near $K$ but constant outside of some other compact $C \subset U$.

\begin{prop}\label{micro}
Let $K \subseteq U$ be compact and $P$ be a polyhedron. Let $f : P \times [0,1] \lra \Psi_\theta(U)$ be a continuous map. Then there exists an $\epsilon > 0$ and a continuous map $g: P \times [0, \epsilon] \lra \Psi_\theta(U)$ such that
\begin{enumerate}[(i)]
	\item $f|_{P \times [0, \epsilon]}$ agrees with $g$ on a neighbourhood of $K$,
	\item $g|_{P \times \{ 0 \}}$ agrees with $f|_{P \times \{ 0 \}}$,
	\item There is a compact set $C \subseteq U$ such that
$$P \times [0, \epsilon] \overset{g} \lra \Psi_\theta(U) \lra \Psi_\theta(U \setminus C)$$
factors through the projection $\pi_1 : P \times [0, \epsilon] \lra P$, i.e.\ is constant along $[0, \epsilon]$
\end{enumerate}
\end{prop}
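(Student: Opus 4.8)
The plan is to take $g$ to be $f$ ``run at speed $\lambda(u)$ at the point $u$'', so that it equals $f$ where $\lambda=1$ and stays frozen at $f_0$ where $\lambda=0$. Fix an open $V$ and a compact $C$ with $K\subset V\subset\bar V\subset\mathring C\subset U$, together with a smooth $\lambda:U\to[0,1]$ with $\lambda\equiv 1$ on $V$ and $\operatorname{supp}\lambda\subset\mathring C$. Regarding $f$ as a homotopy over $[0,1]$, apply the construction of Lemma \ref{Deformation} with $X=P$ and $\tau=\lambda$ (independent of the $P$-coordinate), obtaining $F_\lambda$ with $(t,p,u)\mapsto(t\lambda(u),p,u)$, and let $g$ be the map, defined on $P\times[0,\epsilon]$, whose graph is $\Gamma:=F_\lambda^{-1}(\Gamma_f)$. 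Granting that $\Gamma$ is indeed the graph of a continuous map, conditions (i)--(iii) will be immediate: on $V$ one has $\lambda\equiv 1$, so $\Gamma$ agrees with $\Gamma_f$ and $g$ with $f$; on $U\setminus C$ one has $\lambda\equiv 0$, so $(t\lambda(u),p,u)=(0,p,u)$ and $g_t$ is independent of $t$ there, with value $f_0$; and at $t=0$ the map $F_\lambda|_{\{0\}\times P\times U}$ is the slice inclusion, so $g_0=f_0$.

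The real content is to show $\Gamma$ is the graph of a continuous map $P\times[0,\epsilon]\to\Psi_\theta(U)$ for some $\epsilon>0$, which I would do by checking the hypotheses of Lemma \ref{Deformation} near each point. On the open set $V\cup(U\setminus C)$ the function $\lambda$ is locally constant, so hypothesis (i) of that lemma holds and nothing further is needed. On the compact interpolation region $C\setminus V$ one must instead use hypothesis (ii): I would first apply \cite[Lemma 2.17]{GR-W} to deform $f$, through a homotopy supported inside $C\setminus V$ and hence leaving $f$ untouched near $K$, until $f$ is smooth on a neighbourhood of $C\setminus V$; then I would verify that $F_\lambda|_{\{t\}\times P\times U}$ is transverse to $\Gamma_f$. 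At $t=0$ this last map is the slice inclusion $(p,u)\mapsto(0,p,u)$, which is transverse to $\Gamma_f$ precisely because the projection $\pi_1:\Gamma_f\to [0,1]\times P$ is a submersion, so $T\Gamma_f$ already spans the homotopy direction; outside $C$ the same holds for all $t$; and since transversality is an open condition and $\Gamma_f\cap([0,1]\times P\times C)$ is compact, it persists for all $t$ in some interval $[0,\epsilon]$. Lemma \ref{Deformation} then produces the continuous map $g$, and the verification of (i)--(iii) described above goes through.

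The step I expect to be the main obstacle is reconciling this smoothing with condition (ii): to make $f$ smooth on a neighbourhood of $C\setminus V$ one is forced to alter $f$ at $t=0$ inside $C\setminus V$, whereas (ii) demands $g_0=f_0$ on the nose. Getting around this is where the argument needs genuine care --- for instance, by only smoothing $f$ for $t$ bounded away from $0$ and then establishing continuity of $g$ at $t=0$ by hand from the continuity (not the smoothness) of the original $f$, using that $t\lambda(u)\to 0$ as $t\to 0$; or by following $g$ with an explicit further modification near $t=0$, supported in $C\setminus V$, that restores the value $f_0$. The precise choice of the cutoff $\lambda$ relative to the region where smoothing is performed has to be coordinated with whichever device one adopts. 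By contrast the transversality input and the deduction of (i) and (iii) are routine once the framework is set up.
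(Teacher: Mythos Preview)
Your approach is the paper's: pick a cutoff $\tau$ that is $1$ near $K$ and compactly supported, smooth $f$ (via \cite[Lemma 2.17]{GR-W}) on the region where $\tau$ is not locally constant, and set $g=f_\tau$ using Lemma~\ref{Deformation}, with $\epsilon$ obtained from openness of transversality on the compact region where $\tau$ varies. The only cosmetic difference is that the paper allows $\tau$ to depend on $p\in P$ while you take it independent of $P$; this is immaterial.

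The obstacle you flag for condition (ii) is real, but the paper does not resolve it either: it simply writes ``we may suppose that $f$ is smooth near $P\times A$, and unchanged near $P\times B$'' and thereafter verifies (ii) for the \emph{smoothed} $f$. Read literally, the paper's argument therefore only establishes $g|_{P\times\{0\}}=f|_{P\times\{0\}}$ away from the region where the smoothing took place, i.e.\ near $K$ and outside $C$ (since the smoothing can be supported in a neighbourhood of $A\subset C$). This weaker statement is all that is used in the sole application, Theorem~\ref{thm:Microflexible}, where one glues $g$ to $h_0$ over $U\setminus C$ via the cartesian square. So you need not devise the delicate fixes you sketch; the paper treats the point as harmless, and for its purposes it is. If you want a clean statement, simply weaken (ii) to require agreement only outside $C$.
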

\begin{proof}
Choose a smooth function $\tau : P \times U \lra [0,1]$ that is constantly 1 on a neighbourhood of $P \times K$, and with compact support. Let $A \subseteq U \setminus K$ be a closed set such that $\tau$ is locally constant on $P \times (U \setminus A)$. Let $B \subseteq U \setminus A$ be a closed neighbourhood of $K$. By \cite[Lemma 2.17]{GR-W}, we may suppose that $f$ is smooth near $P \times A$, and unchanged near $P \times B$.

Let $F_\tau : [0,1] \times P \times U \lra [0,1] \times P \times U$ be given by $(t, p, u) \mapsto (t \tau(p, u), p, u)$. The transversality condition of Lemma \ref{Deformation} is satisfied on $P \times A$ at $t=0$, and so it is for $t \in [0, \epsilon]$ for some $\epsilon>0$, as transversality is an open condition. On $P \times (U \setminus A)$, $\tau$ is locally constant and so the first condition of Lemma \ref{Deformation} is satisfied. Thus $F_\tau^{-1}(\Gamma_f) \cap ([0, \epsilon] \times P \times U) = \Gamma_{f_\tau}$ for a (unique) continuous $f_\tau$. Choose $g := f_\tau : [0, \epsilon] \times P \lra \Psi_\theta(U)$.

Part (i) is satisfied as $\tau(u, p) = 1$ on a neighbourhood of $P \times K$ and so $F_\tau$ is the identity on a neighbourhood of $[0,1] \times P \times K$, so $f|_{[0, \epsilon] \times P}$ and $f_\tau$ agree on this neighbourhood of $K$.

Part (ii) is satisfied as $F_\tau (0,p, x) = (0,p, x)$ so $f$ and $f_\tau$ agree at 0.

Part (iii) is satisfied by taking $C$ to be such that $\mathrm{supp}(\tau) \subseteq P \times C$. For $x$ outside of $C$, $F_\tau(t,p, x) = (0,p, x)$ and so $\Gamma_{f_\tau} \cap [0,\epsilon] \times P \times (U \setminus C) = [0, \epsilon] \times \Gamma_{f_\tau(0, p, x)}$. Thus the composite factors through the projection as required.
\end{proof}

Finally, we will use this proposition to prove micro-flexibility of the sheaf $\Psi_\theta(-)$.

\begin{thm}\label{thm:Microflexible}
The sheaf $\Psi_\theta(-)$ is micro-flexible.
\end{thm}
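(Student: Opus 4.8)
The plan is to reduce the definition of micro-flexibility directly to Proposition \ref{micro}, handling the translation from the abstract ``closed subsets of a manifold $M$ and pairs of open neighbourhoods'' formulation to the concrete ``compact $K \subset U \subseteq \bR^n$'' setting in which Proposition \ref{micro} is phrased. First I would take an inclusion $C' \subset C$ of compact subsets of $M$, open sets $U' \subset U$ containing $C'$ and $C$ respectively, and a lifting problem as in the definition, i.e.\ a compact polyhedron $P$, a map $\{0\} \times P \to \Psi_\theta(U \supset C)$ and a homotopy $[0,1] \times P \to \Psi_\theta(U' \supset C')$ extending it. Since $\Psi_\theta(U \supset C) = \colim_{V \supset C} \Psi_\theta(V)$ and $P$ is compact, the top map factors through $\Psi_\theta(V)$ for some open $V$ with $C \subseteq V \subseteq U$; similarly the homotopy factors through $\Psi_\theta(V')$ for some open $V'$ with $C' \subseteq V' \subseteq U'$, and after shrinking we may arrange $V' \subseteq V$. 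After further shrinking I would pick open sets with $C' \subseteq \tilde U' \subseteq \overline{\tilde U'} \subseteq V'$ and $C \subseteq \tilde U \subseteq \overline{\tilde U} \subseteq V$, and choose a slightly larger open set $\hat{U} \subseteq V$ containing $\overline{\tilde U}$ that is a (disjoint union of) affine patch(es); replacing $M$ by this open subset, which is diffeomorphic to an open subset of $\bR^n$, brings us into the setting of Proposition \ref{micro}.

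Next I would apply Proposition \ref{micro} with $U := \hat{U} \cong$ (open in $\bR^n$), $K := \overline{\tilde U'}$ (a compact subset of $U$), and the given homotopy $f : P \times [0,1] \to \Psi_\theta(\hat U)$ obtained by restricting along $\Psi_\theta(V') \to \Psi_\theta(\hat U)$. Proposition \ref{micro} produces an $\epsilon > 0$, a compact $C_0 \subseteq \hat U$, and a homotopy $g : P \times [0,\epsilon] \to \Psi_\theta(\hat U)$ which agrees with $f$ on a neighbourhood of $K = \overline{\tilde U'}$, starts at $f|_{P \times \{0\}}$, and becomes constant in the $[0,\epsilon]$-direction after restriction to $\Psi_\theta(\hat U \setminus C_0)$. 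The point is that $g$ being constant near the ``outside'' of $\hat U$ means it can be glued to the original homotopy restricted to $V$: more precisely, since $g$ agrees with $f$ near $K \supseteq C'$ and is eventually constant outside $C_0$, and since near $K$ we already know $f$ extends (being the restriction of the original top map defined on $\Psi_\theta(V)$), we can patch $g$ on $\tilde U$ together with the constant homotopy (in $t$) given by the original map on $\tilde U \setminus$ (small nbhd of $C'$), using the sheaf property of $\Psi_\theta$, to produce a genuine homotopy $P \times [0,\epsilon] \to \Psi_\theta(\tilde U)$ lifting the square. This solves the lifting problem with the smaller pair $\tilde U' \subset \tilde U$, which is exactly what the definition of micro-flexibility allows.

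The main obstacle I expect is the bookkeeping of the gluing in the last step: one must be careful that the ``constant outside $C_0$'' conclusion of Proposition \ref{micro} (iii) is compatible, on the overlap region, with the a priori extension of the top map of the square that lives over $\Psi_\theta(V) \supseteq \Psi_\theta(\tilde U)$, so that the two homotopies being glued actually agree on an open overlap and the sheaf property of $\Psi_\theta$ (established in \S\ref{sec:chapter2:SpacesOfManifolds}) can be invoked. A secondary point is to double-check that the colimit-over-neighbourhoods definition of $\Psi_\theta(U \supset C)$ interacts correctly with compactness of $P$ to let one pass to a single open neighbourhood $V$ — this is routine but should be stated, since without it the phrase ``Proposition \ref{micro} applies'' is not literally meaningful. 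Everything else (transversality being open, the affine-patch reduction, Sard) is already packaged in the earlier lemmas.
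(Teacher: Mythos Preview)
Your overall strategy matches the paper's: apply Proposition \ref{micro} to the given homotopy and then glue the resulting partially-constant homotopy with the initial data using the sheaf property. However, there is a genuine error in your bookkeeping that breaks the argument as written.

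You apply Proposition \ref{micro} on $\hat U$, where $\hat U \subseteq V$ is chosen to contain $\overline{\tilde U} \supseteq C$, and you claim to obtain the input homotopy $f : P \times [0,1] \to \Psi_\theta(\hat U)$ by ``restricting along $\Psi_\theta(V') \to \Psi_\theta(\hat U)$''. But such a restriction map exists only if $\hat U \subseteq V'$, and you have arranged no such containment: $V'$ is a neighbourhood of the \emph{smaller} compact $C'$, whereas your $\hat U$ is a neighbourhood of the \emph{larger} compact $C$. The homotopy is simply not defined on $\hat U$, so Proposition \ref{micro} cannot be invoked there.

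The fix, which is exactly what the paper does, is to reverse the roles. Apply Proposition \ref{micro} on the \emph{smaller} open set $U'$, with compact $K' \subseteq U'$; this is where the homotopy $h$ genuinely lives. One obtains $g : [0,\epsilon] \times P \to \Psi_\theta(U')$ agreeing with $h$ near $K'$, equal to $h_0|_{U'}$ at $t=0$, and constant in $t$ outside some compact $C_0 \subset U'$. Now glue: on $U \setminus C_0$ use the constant-in-$t$ homotopy at value $h_0$ (which \emph{is} defined on all of $U$, being the top map of the square); on $U'$ use $g$. These agree on the overlap $U' \setminus C_0$ since there $g$ is constant with value $g|_{t=0} = h_0|_{U'}$. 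The cartesian square
\begin{diagram}
\Psi_\theta(U) & \rTo & \Psi_\theta(U \setminus C_0) \\
\dTo & & \dTo \\
\Psi_\theta(U') & \rTo & \Psi_\theta(U' \setminus C_0)
\end{diagram}
then produces the lift $l : [0,\epsilon] \times P \to \Psi_\theta(U)$, and one finishes by shrinking $U'$ to a $\tilde U'$ on which $g$ and $h$ agree.

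Your detour through affine patches is also unnecessary: Proposition \ref{micro} and Lemma \ref{Deformation} are local statements that hold on any open subset of a manifold, and the paper's proof makes no such reduction.
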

\begin{proof}
Let $K' \subseteq K \subseteq V$ be compact subsets. To show micro-flexibility it is enough to show that for all open sets $K' \subset U'$, $K \subset U$ such that $U' \subseteq U$, and all diagrams
\begin{diagram}
\{ 0 \} \times P & \rTo^{h_0} & \Psi_\theta(U) \\
\dTo & & \dTo \\
[0,1] \times P & \rTo^h & \Psi_\theta(U')
\end{diagram}
where $P$ is a compact polyhedron, there is an $\epsilon > 0$ and a $l : [0, \epsilon] \times P \lra \Psi_\theta(U)$ extending $h_0$ over $h|_{[0, \epsilon] \times P}$ (possibly after shrinking $U$ and $U'$).

We have $K' \subseteq U'$ a compact subset and $h : [0,1] \times P \lra \Psi_\theta(U')$ continuous. By Proposition \ref{micro} there is an $\epsilon > 0$ and a $g: [0,\epsilon] \times P \lra \Psi_\theta(U')$ that agrees with $h|_{P \times [0, \epsilon]}$ near $K$, agrees with $h|_{P \times \{0\}}$ on $P$, and is constant outside of a compact set $C \subset U'$. The following diagram of solid arrows then commutes
\begin{diagram}
\{ 0 \} \times P & \rTo^{h_0} & \Psi_\theta(U) & \rTo & \Psi_\theta(U \setminus C) \\
\uTo^{\pi_2} & & \dTo[dotted] & & \dTo \\
[0, \epsilon] \times P & \rTo^g & \Psi_\theta(U') & \rTo & \Psi_\theta(U' \setminus C).
\end{diagram}
Thus thinking of the rightmost square as being cartesian, we get a continuous map from $[0, \epsilon] \times P$ to $\Psi_\theta(U)$, which restricts on $\{0\} \times P$ to $h_0$. This is $l: [0, \epsilon] \times P \lra \Psi_\theta(U)$ and its restriction to $U'$ is $g$ by construction.

As $g$ agrees with $h|_{[0, \epsilon] \times P}$ near $K'$, we can pass to a smaller $K' \subset \tilde{U}' \subset U'$ on which they agree. Then $l$ is a lift extending $h_0$ and covering $[0,\epsilon] \times P \lra \Psi_\theta(U') \lra \Psi_\theta(\tilde{U}')$, as required.
\end{proof}

\section{Applying Gromov's \texorpdfstring{$h$}{h}-principle}\label{sec:HPrincipleApplication}

We will apply the $h$-principle of Gromov \cite[p. 79]{Gromov}, which states that a micro-flexible, $\Diff(V)$-invariant sheaf $\Phi$ on an open manifold $V$ (possibly with boundary $\partial V$) satisfies the parametric $h$-principle. Let us explain this statement.

\vspace{2ex}

Suppose that $V$ is paracompact, so admits a Riemannian metric $d$ and hence an exponential map $\mathrm{exp} : TV \dashrightarrow V$ defined on a neighbourhood of the zero section. Let $\epsilon > 0$ be small enough that if we write $V_\epsilon = \{ v \in V \,\,|\,\, d(v, \partial V) \geq \epsilon \}$ then $V \setminus V_{2\epsilon}$ is a collar neighbourhood of $\partial V$ diffeomorphic to $\partial V \times [0,2\epsilon)$. There is a smooth function $r: V \to (0,\epsilon)$ such that at $v \in V$ the injectivity radius of the exponential map is at least $r(v)$ (this follows by the paracompactness of $V$ and the local existence of such a function). Let $D_r(TV)$ be the smooth open disc bundle where the fibre over $v$ is the open disc of radius $r(v)$ in $T_vV$, so that the exponential map defines a smooth map $\mathrm{exp}(r) : D_r(TV) \to V$. Note there is also a fibrewise diffeomorphism $D_r(TV) \cong TV$ that is the identity near the zero section, which with $\mathrm{exp}(r)$ gives a map
$$\rho : TV \lra V$$
with derivative the identity near the zero section.

Let us write $\Phi^{fib}(TV)$ for $F(V) \times_{GL_n(\bR)} \Phi(\bR^n)$ where $F(V)$ is the frame bundle of $V$. For an element $x \in \Phi(V)$ we may consider the map $V \to \Phi^{fib}(TV)$ over $V$ given by 
$$v \mapsto \{\rho|_{T_vV}\}^{-1}(x) \in \Phi(T_vV).$$
This defines a continuous map, the \textit{scanning map},
$$\Phi(V) \lra \Gamma(\Phi^{fib}(TV) \to V),$$
and to say $\Phi$ satisfies the parametric $h$-principle is to say this map is a weak homotopy equivalence. By Theorem \ref{thm:Microflexible} the sheaf $\Psi_\theta(-)$ is micro-flexible, and it is also equivariant.

\begin{cor}
Let $V$ be a smooth manifold without boundary which is open (i.e.\ has no compact components). Then the scanning map
$$\Psi_\theta(V) \lra \Gamma(\Psi_\theta^{fib}(TV) \to V)$$
is a weak homotopy equivalence.
\end{cor}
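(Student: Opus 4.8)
The plan is to deduce the statement directly from Gromov's $h$-principle, whose applicability was essentially prepared in the preceding discussion. The steps, in order, are: \textbf{(1)} observe that restricting $\Psi_\theta$ along open embeddings between open subsets of $V$ makes it a sheaf of topological spaces on the site $\mathcal{O}(V)$; \textbf{(2)} invoke Theorem~\ref{thm:Microflexible} to know this sheaf is micro-flexible; \textbf{(3)} note that because $\Psi_\theta$ is a functor on the site of all $n$-manifolds and open embeddings (\S\ref{sec:chapter2:SpacesOfManifolds}), the group $\Diff(V)$ acts on it, so it is $\Diff(V)$-invariant; \textbf{(4)} use the hypothesis that $V$ is open, so Gromov's theorem \cite[p.~79]{Gromov} applies and asserts that $\Psi_\theta$ satisfies the parametric $h$-principle on $V$; and \textbf{(5)} recall, from the paragraph preceding the corollary, that the parametric $h$-principle is by definition the assertion that the scanning map $\Psi_\theta(V)\to\Gamma(\Psi_\theta^{fib}(TV)\to V)$ — built from the map $\rho\colon TV\to V$ constructed there — is a weak homotopy equivalence. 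Once these are assembled there is nothing further to prove.

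The only point demanding genuine care, and the one I would treat as the main obstacle, is reconciling the two descriptions of the ``space of formal solutions.'' Gromov's comparison map lands in a space assembled from germs of the sheaf at the points of $V$, and one must identify this with the fibrewise-applied sheaf $\Psi_\theta^{fib}(TV)=F(V)\times_{GL_n(\bR)}\Psi_\theta(\bR^n)$ and verify that his map agrees with our scanning map. This hinges on the $GL_n(\bR)$-equivariant homotopy equivalence $\Psi_\theta(\bR^n)\simeq\Th(\gamma_d^\perp\to\Gr_d^\theta(\bR^n))$ recorded in \S\ref{sec:EvaluatingOnRn} — it is precisely this equivariance that makes the fibrewise construction well-posed — together with continuity of the sheaf, which lets one replace the germ of $\Psi_\theta$ at $v$ by $\Psi_\theta(T_vV)$ up to weak equivalence, compatibly with the exponential/$\rho$ identification of a neighbourhood of $v$ with $T_vV$.

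I would emphasize in the write-up that openness of $V$ is essential and is built into the hypothesis: for manifolds with compact components the scanning map is not a weak equivalence (the classifying-space formula of Theorem~\ref{MainTheorem} will be obtained only after the poset and monoid manipulations of later sections feed this corollary back in). Beyond that, the proof is a short matter of quoting Theorem~\ref{thm:Microflexible}, noting equivariance, and citing \cite[p.~79]{Gromov}; the construction of the scanning map and of $\rho$ having already been carried out, no additional bookkeeping about metrics or injectivity radii is needed here.
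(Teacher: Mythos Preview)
Your proposal is correct and matches the paper's approach exactly: the corollary is stated as an immediate consequence of Gromov's $h$-principle once micro-flexibility (Theorem~\ref{thm:Microflexible}) and $\Diff(V)$-invariance of $\Psi_\theta$ have been noted, with the scanning map already constructed in the preceding paragraphs. The paper gives no separate proof beyond the sentence ``By Theorem~\ref{thm:Microflexible} the sheaf $\Psi_\theta(-)$ is micro-flexible, and it is also equivariant,'' so your slightly more explicit unpacking of the identification of formal solutions with $\Psi_\theta^{fib}(TV)$ is, if anything, more careful than the original.
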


This corollary precisely says that the functor $\Psi_\theta(-)$ is \textit{linear}, in the sense of the manifold calculus of Weiss and Goodwillie \cite{WeissEmb, GoodwillieWeiss}. Note that by taking $V = \bR \times M$ for $M$ a compact manifold without boundary, we obtain Theorem \ref{MainTheorem} in the case of $M$ compact without boundary. The case of $M$ non-compact or with boundary is slightly more complicated.

\vspace{2ex}

There is a relative version of this construction for manifolds $V$ with boundary. Suppose $\Phi(-)$ takes values in \textit{pointed} topological spaces, and let $\Phi(V, \partial V) \subset \Phi(\mathring{V})$ be the subspace of those elements which restrict to $* \in \Phi(U \cap \mathring{V})$ for some open neighbourhood $U$ of $\partial V$. Then $\Phi(V, \partial V)$ has covariant extension maps: if $V \hookrightarrow U$ is a closed inclusion of a submanifold with boundary, there is a map $\Phi(V, \partial V) \to \Phi(U, \partial U)$ which extends sections to be $*$ outside of $V$. There is a scanning map
$$\Phi(V_{\epsilon}, \partial V_{\epsilon}) \lra \Gamma(\Phi^{fib}(T\mathring{V}) \to \mathring{V};\partial V)$$
to the space of sections which restrict to $*$ near $\partial V$. If $\Phi$ satisfies the parametric $h$-principle (i.e.\ $\Phi$ is micro-flexible and equivariant, and the pair $(V, \partial V)$ is open) then this map is a weak homotopy equivalence. Note that the source and target of this map are respectively homotopy equivalent to $\Phi(V, \partial V)$ and the space $\Gamma(\Phi^{fib}(TV) \to V;\partial V)$ of sections which take the value $*$ on $\partial V$.

\vspace{2ex}

By the remarks in \S\ref{sec:EvaluatingOnRn}, there is an $GL_n(\bR)$-equivariant map
$$T_\theta(\bR^n) \lra \Psi_\theta(\bR \oplus \bR^n)$$
which is also a weak homotopy equivalence. By Theorem \ref{thm:Microflexible} the sheaf $\Psi_\theta$ is micro-flexible, it is also equivariant and the pair $(\bR \times M, \bR \times \partial M)$ is open for any manifold $M$ so we have a zig-zag of weak homotopy equivalences
$$\Psi_\theta(\bR \times M) \simeq \Gamma(\Psi^{fib}_{\theta}(\bR \oplus TM) \to \bR \times M;\bR \times \partial M) \simeq \Gamma(T_\theta^{fib}(TM) \to M;\partial M).$$
This establishes Theorem \ref{MainTheorem} for all compact manifolds $M$, and the case of non-compact manifolds follows by the following general argument.

\begin{cor}\label{cor:ScanningMapEq}
Let $M$ be a smooth manifold, possibly with boundary. Let $\Psi_\theta^{c}(\bR \times M) \subseteq \Psi_{\theta}(\bR \times M)$ be the subspace of those $d$-submanifolds which are contained in $\bR \times K$ for some compact $K \subseteq M$. Then the scanning map
$$\Psi_\theta^{c}(\bR \times M) \lra \Gamma_{cM}(\Psi_\theta^{fib}(\bR \oplus TM) \to \bR \times M)$$
is a weak homotopy equivalence, where $\Gamma_{cM}$ denotes sections that are ``uniformly compactly supported in the $M$ direction'': supported in some $\bR \times K$ for $K \subset M$ compact.
\end{cor}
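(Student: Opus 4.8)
The plan is to reduce the non-compact case to the compact case, already established, by writing everything as a filtered colimit over compact submanifolds $K \subseteq M$ and checking that the scanning map is compatible with these colimits. Concretely, $\Psi_\theta^c(\bR \times M)$ is by definition $\colim_{K} \Psi_\theta(\bR \times K)$, the colimit being taken over the poset of compact codimension-$0$ submanifolds $K \subseteq M$ (with boundary), ordered by inclusion, with the covariant extension maps of \S\ref{sec:Covariance}. Similarly, a section supported in $\bR \times K$ for some compact $K$ determines, by restriction, an element of $\Gamma(\Psi_\theta^{fib}(\bR \oplus TK) \to \bR \times K; \bR \times \partial K)$ — a section vanishing near $\bR \times \partial K$ — and conversely such a section extends by the constant section $*$ to all of $\bR \times M$; this exhibits $\Gamma_{cM}(\Psi_\theta^{fib}(\bR \oplus TM) \to \bR \times M)$ as $\colim_K \Gamma(\Psi_\theta^{fib}(\bR \oplus TK) \to \bR \times K; \bR \times \partial K)$. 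The scanning map commutes with these identifications because scanning is local and natural with respect to the covariant extension-by-$*$ maps.

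First I would set up the two colimit descriptions carefully, exhausting $M$ by an increasing sequence $K_1 \subseteq K_2 \subseteq \cdots$ of compact codimension-$0$ submanifolds with $\bigcup_i K_i = M$ and $K_i \subseteq \mathring{K}_{i+1}$; such an exhaustion exists for any manifold $M$ (possibly with boundary, possibly non-compact, but second countable). Then $\Psi_\theta^c(\bR \times M) = \colim_i \Psi_\theta(\bR \times K_i)$ and likewise on the section side. Next I would invoke the relative parametric $h$-principle stated just above the corollary: for each $i$, since the pair $(\bR \times K_i, \bR \times \partial K_i)$ is open and $\Psi_\theta$ is micro-flexible (Theorem \ref{thm:Microflexible}) and equivariant, the scanning map
$$\Psi_\theta(\bR \times K_i) \simeq \Psi_\theta(\bR \times (K_i)_\epsilon, \bR \times \partial((K_i)_\epsilon)) \lra \Gamma(\Psi_\theta^{fib}(\bR \oplus TK_i) \to \bR \times K_i; \bR \times \partial K_i)$$
is a weak homotopy equivalence. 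Finally I would pass to the colimit: a colimit of a sequence of weak homotopy equivalences along maps that are sufficiently well-behaved (here the extension-by-$*$ maps on both sides, which are closed inclusions, so the colimits compute homotopy colimits and commute with homotopy groups) is again a weak homotopy equivalence, and this colimit map is the scanning map for $M$.

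The main obstacle I anticipate is the bookkeeping around \emph{which} sections-space the relative $h$-principle actually produces, and matching it to $\Gamma_{cM}$. The relative statement gives sections vanishing near $\bR \times \partial K_i$, and one must check that under the extension maps these assemble to sections of $\Psi_\theta^{fib}(\bR \oplus TM) \to \bR \times M$ that are literally constant (equal to $*$, i.e. the empty manifold) outside some $\bR \times K$ — that is, uniformly compactly supported in the $M$-direction but with no constraint in the $\bR$-direction — which is exactly the definition of $\Gamma_{cM}$. One also needs that scanning is natural for the extension-by-$*$ maps, which is essentially formal since scanning only reads off the germ of a $\theta$-submanifold near each point and the empty manifold scans to the basepoint section; I would spell this out to the extent needed. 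A secondary technical point is ensuring the colimit topology on $\Psi_\theta^c(\bR \times M)$ agrees with the one used in \S\ref{sec:chapter2:PosetModels}, but this is the definition, so no work is required there.
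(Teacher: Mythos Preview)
Your proposal is correct and follows essentially the same approach as the paper: both express source and target as filtered colimits over compact codimension-$0$ submanifolds $K \subseteq M$, apply the relative parametric $h$-principle termwise, and then pass to the (homotopy) colimit. The paper's only additional bookkeeping is in its indexing category $\mathcal{K}$, whose objects are pairs $(K, w(K))$ tracking an explicit shrinking parameter so that the scanning map (which needs room for the exponential map) is genuinely defined on $\Psi_\theta(\bR \times K_{w(K)})$ at each stage and the resulting diagram strictly commutes --- your ``$\Psi_\theta(\bR \times K_i) \simeq \Psi_\theta(\bR \times (K_i)_\epsilon, \ldots)$'' is the same move, just less explicitly packaged.
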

\begin{proof}
Let $\mathcal{K}$ be the category having as objects pairs $(K, w(K))$ of a compact submanifold $K$ of $M$ with piecewise smooth boundary, and a real number $w(K) \in (0, \inf\{r(k) \,|\, k \in K\})$ such that the inclusion $K_{w(K)} \hookrightarrow K$ is a homotopy equivalence. A morphism $(K, w(K)) \to (K', w(K'))$ in $\mathcal{K}$ exists if $K \subseteq K'$ and $K_{w(K)} \subseteq K'_{w(K')}$, in which case it is unique. For each $(K, w(K)) \in \mathcal{K}$, let $\Gamma(\Psi_{\theta}^{fib}(\bR \oplus TK) \to \bR \times K ; \bR \times \partial K)$ denote the space of sections which over $\bR \times \partial K$ take value $\emptyset \in \Psi_\theta(\bR \oplus T_kK)$.

The scanning map on $\bR \times M$ restricts to a map
$$\Psi_\theta(\bR \times K_{w(K)}) \lra \Gamma(\Psi_{\theta}^{fib}(D_{r}(\bR \oplus TK)) \to \bR \times K ; \bR \times \partial K)$$
and $K_{w(K)} \overset{\simeq} \hookrightarrow K$, so by an application of the discussion above this map is a weak homotopy equivalence.

As discussed in \S\ref{sec:Covariance}, given an inclusion $K \subseteq K'$ such that $K_{w(K)} \subseteq K'_{w(K')}$ there are extension maps
$$\Psi_\theta(\bR \times K_{w(K)}) \lra \Psi_\theta(\bR \times K'_{w(K')})$$
which come from including a closed submanifold of $\bR \times K_{w(K)}$ that is contained in the interior as a closed submanifold of $\bR \times K'_{w(K')}$ that is contained in the interior. These extension maps give a diagram of topological spaces indexed by $\mathcal{K}$,
$$(K, w(K)) \mapsto \Psi_\theta(\bR \times K_{w(K)})$$
and $\colim_\mathcal{K} \Psi_\theta(\bR \times K_{w(K)}) = \Psi_\theta^c(\bR \times M)$.

There are also extension maps
$$\Gamma(\Psi_{\theta}^{fib}(\bR \oplus TK) \to \bR \times K; \bR \times \partial K) \lra \Gamma(\Psi_{\theta}^{fib}(\bR \oplus TK') \to \bR \times K'; \bR \times \partial K')$$
given by extending sections that are constantly $\emptyset$ on the boundary of $K$ by $\emptyset$. These also give a diagram of topological spaces indexed by $\mathcal{K}$ with colimit $\Gamma_{cM}(\Psi_{\theta}^{fib}(TM \times \mathbb{R}) \to M \times \mathbb{R})$. The scanning construction gives a map of $\mathcal{K}$-diagrams which is levelwise a weak homotopy equivalence, and $\mathcal{K}$ is right filtered so colimits and homotopy colimits are weakly homotopy equivalent \cite[p. 331]{Bousfield-Kan}. The result follows.
\end{proof}

\begin{rem}
We did not use any properties of the sheaf $\Psi_\theta$ in this proof, and the same holds for any sheaf of pointed spaces $\Phi$, once it satisfies the parametric $h$-principle for compact manifolds with boundary.
\end{rem}

Theorem \ref{MainTheorem} now follows directly from Theorem \ref{CategoryWeakEq} and Corollary \ref{cor:ScanningMapEq}.

\begin{rem}
There is another approach to prove Corollary \ref{cor:ScanningMapEq}, analogous to that used by McDuff \cite{McDuff} for configuration spaces. It is clear that the scanning map
$$\Psi_\theta(D^n) \lra \Gamma (\Psi_\theta^{fib}(TD^n) \to D^n)$$
is a homotopy equivalence, as the right hand side is a space of sections over a contractible space, so is homotopy equivalent to $\Psi_\theta(\bR^n)$. It is also clear that the functor on the right hand side is \textit{homotopy excisive}, in the sense that it sends cocartesian squares of manifolds to homotopy cartesian squares of spaces.

To show the two functors agree on all manifolds, it is enough to show that $\Psi_\theta(-)$ is also homotopy excisive. However, to show they agree on open manifolds it is enough to show $\Psi_\theta(-)$ sends only \textit{certain} pushouts of manifolds to homotopy cartesian squares. As it certainly produces cartesian squares, if we can show restriction maps $\Psi_\theta(U) \to \Psi_\theta(V)$ are quasi-fibrations for certain pairs $V \subset U$ (as in \cite[p. 97]{McDuff}) the result follows.

This can be done: the necessary restriction maps \textit{are} quasi-fibrations, however the technical details are overly complicated, and we do not include them here.
\end{rem}

\section{Homotopy invariance}\label{sec:HomotopyInvariance}
From the weak equivalence
$$B \mathcal{C}_\theta (M) \simeq \Gamma_c(T_\theta^{fib}(TM) \to M)$$
of Theorem \ref{MainTheorem}, it is clear that the homotopy type of $B \mathcal{C}_\theta (M)$ depends only on the proper homotopy type of the manifold $M$ and the isomorphism class of the once-stabilised tangent bundle (recall that $T_\theta(V)$ is really a functor of the once stabilised vector space $\bR \oplus V$).

\begin{thm}\label{NormalHomotopyEquivalence}
The weak homotopy type of the space $B \mathcal{C}_\theta (M)$ is an invariant of ``normal proper homotopy equivalence'', that is, proper homotopy equivalences of manifolds $f: M \to N$ of the same dimension such that $f^* \nu_N \cong \nu_M$, where $\nu$ denotes the stable normal bundle.
\end{thm}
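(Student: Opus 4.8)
The plan is to feed Theorem \ref{MainTheorem} into the observation made in the paragraph above it, namely that $\Gamma_c(T_\theta^{fib}(TM) \to M)$ depends on $M$ only through its proper homotopy type and the isomorphism class of the once-stabilised tangent bundle $\bR \oplus TM$. Granting this, all that is left is the bookkeeping needed to extract exactly this data from a normal proper homotopy equivalence. I will organise the argument in two halves: first that the compactly supported section functor is a proper-homotopy invariant of the pair (manifold, once-stabilised tangent bundle), and second that the normal hypothesis produces an honest isomorphism of once-stabilised tangent bundles along $f$.

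For the first half I would observe that $T_\theta(V) = \Th(\gamma_d^\perp \to \Gr_d^\theta(\bR \oplus V))$ manifestly depends only on the vector space $\bR \oplus V$, and that $GL_n(\bR)$ acts on $T_\theta(\bR^n)$ through the inclusion $GL_n(\bR) \hookrightarrow GL_{n+1}(\bR)$ fixing the first coordinate; hence $T_\theta^{fib}(TM) \cong Fr(\bR \oplus TM) \times_{GL_{n+1}(\bR)} T_\theta(\bR^n)$ and so depends only on the isomorphism class of the rank $(n+1)$-bundle $\bR \oplus TM$ over $M$. Then, for any proper map $f : M \to N$ together with an isomorphism $f^*(\bR \oplus TN) \cong \bR \oplus TM$, pullback of sections gives a continuous map $\Gamma_c(T_\theta^{fib}(TN) \to N) \to \Gamma_c(f^* T_\theta^{fib}(TN) \to M) \cong \Gamma_c(T_\theta^{fib}(TM) \to M)$, with properness of $f$ guaranteeing that the compact-support condition is preserved (and that the canonical basepoint sections correspond). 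Because pulling a fibration back along a proper homotopy produces a fibrewise homotopy equivalence respecting compact supports, this map depends only on the proper homotopy class of $f$, and is a weak homotopy equivalence whenever $f$ is a proper homotopy equivalence carrying such a bundle isomorphism.

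For the second half I would argue purely bundle-theoretically. By hypothesis $f^*\nu_N \cong \nu_M$ as stable bundles; since the stable normal bundle is a stable inverse to the tangent bundle, $f^*TN$ and $TM$ are stably isomorphic rank $n$ vector bundles over $M$. Now $M$ (with or without boundary) has the homotopy type of a CW complex of dimension $\leq n$, while $\bR \oplus f^*TN \cong f^*(\bR \oplus TN)$ and $\bR \oplus TM$ have rank $n+1 > n$; over such a complex stable isomorphism of rank $(n+1)$-bundles coincides with isomorphism, since the stabilisation maps $[M, BO(n+1)] \to [M, BO(n+2)] \to \cdots \to [M, BO]$ are all bijective by obstruction theory (the fibre of $BO(r) \to BO(r+1)$ is $O(r+1)/O(r) = S^r$, which is $(r-1)$-connected, so for $r \geq n+1$ there are no obstructions to lifting maps out of $M$ or homotopies between them). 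Hence $f^*(\bR \oplus TN) \cong \bR \oplus TM$, and combining with the first half and the two instances of Theorem \ref{MainTheorem} yields a zig-zag of weak equivalences $B\mathcal{C}_\theta(N) \simeq \Gamma_c(T_\theta^{fib}(TN)\to N) \simeq \Gamma_c(T_\theta^{fib}(TM)\to M) \simeq B\mathcal{C}_\theta(M)$.

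The main obstacle is the middle step of the first half: verifying that the compactly supported section functor really does send proper homotopy equivalences to weak equivalences. One must use the version of proper homotopy equivalence in which the connecting homotopies $M \times I \to N$ are themselves proper, so that the fibrewise homotopy equivalences obtained by pulling fibrations back along them are compatible with the compact-support conditions on section spaces; granted that, the argument is the usual homotopy-invariance manipulation for section spaces, and the rest of the proof is formal.
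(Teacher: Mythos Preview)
Your proposal is correct and follows essentially the same approach as the paper: reduce via Theorem~\ref{MainTheorem} to the dependence of $\Gamma_c(T_\theta^{fib}(TM)\to M)$ on the proper homotopy type of $M$ together with $\bR\oplus TM$, and then use connectivity of $BO(n+1)\to BO$ to recover $\bR\oplus TM$ from the stable normal bundle. The paper compresses both halves into a couple of sentences, whereas you spell out the obstruction-theory step and the compactly-supported section-space argument in more detail, but the underlying strategy is identical.
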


\begin{proof}
This result will follow immediately once we show that one can recover uniquely up to isomorphism the once-stabilised tangent bundle $\bR \oplus TM$ from the stable normal bundle $\nu_M$ (or, equivalently, the stable tangent bundle). This follows as $BO(n+1) \to BO$ has $(n+1)$-connected homotopy fibre.
\end{proof}

Let us write $\mathbf{Man}$ for the category of finite-dimensional submanifolds of $\bR^\infty$ and closed embeddings. One may ask how far the covariant functor
$$B\mathcal{C}_\theta(-) : \mathbf{Man} \to \mathbf{Top}$$
is from being homotopy invariant. More precisely, consider the category $B\mathcal{C}_\theta(-) \downarrow \mathrm{hoFun}(\mathbf{Man}, \mathbf{Top})$ where $\mathrm{hoFun}(\mathbf{Man}, \mathbf{Top})$ is the localisation of the category of functors obtained by inverting those natural transformations that induce homotopy equivalences on all values of the functors. Is there an initial object $B\mathcal{C}_\theta(-) \to F(-)$ in the subcategory of \textit{homotopy invariant} functors under $B\mathcal{C}_\theta(-)$?

There is \textit{a} homotopy invariant functor under $B\mathcal{C}_\theta(-)$, given by
$$B\mathcal{C}_\theta(M) \to B\mathcal{C}_{\theta \times M}(\bR^\infty) \simeq \Omega^{\infty-1} \MTtheta \wedge M_+$$
defined by the embedding of $M$ in $\bR^\infty$. Thus if $F$ exists there is a natural transformation $F(-) \to \Omega^{\infty-1} \MTtheta \wedge -_+$. We claim that this must be an equivalence in the category described above.

Certainly $M \hookrightarrow \bR^n \times M$ is a closed embedding and a homotopy equivalence, so we should have $F(-) \overset{\simeq} \to F(\bR^n \times -)$, and furthermore
$$F(-) \overset{\simeq}\lra \hocolim_{n \to \infty} F(\bR^n \times -).$$
As $B\mathcal{C}_\theta(\bR^n \times M) \simeq \Gamma_c(\Omega^nT_\theta^{fib}(TM \oplus \bR^n) \to M)$ this last functor receives a natural map from
$$\hocolim_{n \to \infty}\Gamma_c(\Omega^nT_\theta^{fib}(T- \oplus \bR^n) \to -).$$
This homotopy colimit is formed using the map $\Sigma^n T_\theta(V) \to T_\theta(V \oplus \bR^n)$ with adjoint $T_\theta(V) \to \Omega^n T_\theta(V \oplus \bR^n)$ which when applied fibrewise to the tangent bundle of $M$ realises the map $B\mathcal{C}_\theta(M) \to B\mathcal{C}_\theta(\bR^n \times M)$. The colimit over $n$ gives
$$T_\theta(V) \lra T_\theta^{stable}(V) := \hocolim_n \Omega^n T_\theta(V \oplus \bR^n) \simeq \Omega^{\infty-1}(\MTtheta \wedge V^+),$$
a natural transformation to a functor taking values in infinite loop spaces. Then there is a natural equivalence of functors
$$\Gamma_c(T_\theta^{stable, fib}(T-) \to -) \overset{\text{Poincar\'{e} duality}}\simeq \Omega^{\infty-1} \MTtheta \wedge -_+,$$
so there is also a natural transformation $\Omega^{\infty-1} \MTtheta \wedge -_+ \to F(-)$. Thus the best homotopy invariant approximation (from the right) to $B\mathcal{C}_\theta(-)$ is $\Omega^{\infty-1}\MTtheta \wedge-_+$, degree-shifted $\MTtheta$ bordism theory.

\vspace{2ex}

Note that both here and in the next section we use the approach to Poincar\'{e} duality via ``parametrised spectra", which is described in, for example, \cite[p. 25]{CK}.

\section{Poincar\'{e} duality}\label{sec:PoincareDuality}

For simplicity, let us take the tangential structure of an orientation, $\theta : \Gr_d^+(\bR^\infty) \to \Gr_d(\bR^\infty)$, and let $M$ be an $n$-dimensional manifold. Then
$$\pi_0(B\mathcal{C}_\theta(M)) = \{ \text{smooth oriented $(d-1)$-submanifolds of $M$} \}/\sim$$
where the equivalence relation is that of cobordism in $[0,1] \times M$. This is a ``homological" functor of $M$.

\begin{rem}
If one wants to discuss ``cobordism classes of submanifolds of $M$", we claim that this is the correct notion. If instead one asks for submanifolds of $M$ modulo cobordisms in $M$ there are several problems: it is not clear if isotopic embeddings are equivalent (as one may not be able to embed an isotopy), and it is not clear that the cobordism relation is transitive.
\end{rem}
Theorem \ref{MainTheorem} identifies this set with
$$\pi_0 \Gamma_c(T_\theta^{fib}(TM) \to M),$$
a set of isotopy classes of (twisted) functions on $M$. This is a ``cohomological" functor of $M$, which suggests an interpretation of Theorem \ref{MainTheorem} as a form of unstable Poincar\'{e} duality which refines Poincar\'{e} duality for the homology theory $\MTtheta$. There is a homotopy commutative square
\begin{diagram}
B\mathcal{C}_\theta(M) & \rTo & B\mathcal{C}_{\theta \times M}(\bR^\infty) \simeq  \Omega^{\infty-1}(\MTtheta \wedge M_+) \\
\dTo^\simeq_{\text{Theorem \ref{MainTheorem}}} & & \dTo^\simeq_{\text{Poincar\'{e} duality}} \\
\Gamma_c(T_\theta^{fib}(TM) \to M) & \rTo & \Gamma_c(\Omega^{\infty-1} \MTtheta \wedge_{fib} TM^+ \to M)
\end{diagram}
where the lower map is induced by the composition
$$T_\theta(V) \lra T_\theta^{stable}(V) \simeq \Omega^{\infty-1} \MTtheta \wedge V^+$$
applied fibrewise to the tangent bundle of $M$. Thus we can consider Theorem \ref{MainTheorem} as giving a refinement of Poincar\'{e} duality from $\MTtheta$ bordism theory to bordism of submanifolds, which is not an additive theory.

\begin{example}
Let $M$ be a compact 3-manifold, and so parallelisable. Let $d=3$, so that $\pi_0(B\mathcal{C}_3^+(M))$ is the set of cobordism classes of oriented surfaces in $M$. The above observations identify this set with the set of free homotopy classes of maps from $M$ to $\Th(\gamma_3^\perp \to Gr^+_3(\bR^4)) \simeq \Sigma S^3_+ \simeq S^1 \vee S^4$, which is in natural bijection with $H^1(M ; \bZ)$ as the inclusion of a fibre $K(\bZ,1) = S^1 \to \Th(\gamma_3^\perp \to Gr^+_3(\bR^4))$ is 3-connected. The bijection
$$\{\text{Cobordism classes of oriented surfaces in $M$}\} \lra H^1(M;\bZ)$$
simply sends a surface to the Poincar\'{e} dual of the 2-cycle it represents. This recovers the well-known fact that every class in the second homology of a 3-manifold is representable by an embedded submanifold. Furthermore, this submanifold is unique up to cobordism in $[0,1] \times M$.
\end{example}

\section{Configuration spaces of signed points}\label{sec:ConfigurationsOfSignedPoints}

In \cite{McDuff} McDuff introduced the space $C^{\pm}(M)$ of signed configurations in a manifold $M$, topologised so that positive and negative particles can come together and annihilate. In this section we will explore the relation between
\begin{enumerate}[(i)]
	\item the cobordism category $\mathcal{C}_1^+(M)$ having objects configurations of oriented points in $M$ and morphisms oriented 1-submanifolds of $[a,b] \times M$,
	\item the category $\mathrm{Path}(C^\pm(M))$ of continuous paths in $C^\pm(M)$.
\end{enumerate}
We will also show how these ideas identify the homotopy type of the spaces $C^\pm(M)$, using Theorem \ref{MainTheorem}, and hence recover McDuff's Theorem 1.3.

\subsubsection{A comparison functor}

Let us write $\mathcal{C}_1^+(M)$ for the cobordism category with morphisms oriented 1-manifolds (corresponding to the tangential structure $\theta : \Gr_1^+(\bR^\infty) \to \Gr_1(\bR^\infty)$), and $\mathrm{Path}(C^{\pm}(M))$ for the topological category of continuous Moore paths in $C^\pm(M)$. It is intuitively clear from the construction of $C_1^+(M)$ that there ought to be a functor
$$F': \mathcal{C}_1^{+}(M) \lra \mathrm{Path}(C^{\pm}(M))$$
by considering an oriented 1-manifold in $\bR \times M$ to be the trace of a path of signed configurations in $M$. However, this does not quite exist: on a 1-submanifold $W \subset \bR \times M$ the height function $x_1 : W \to \bR$ can be highly degenerate.

Instead, let $\mathcal{C}_1^{+}(M)^i \subset \mathcal{C}_1^{+}(M)$ denote the subcategory having the same space of objects, and spaces of morphisms given by those 1-dimensional oriented submanifolds $A \subset [a,b] \times M$ on which the height function $x_1 : A \to [a,b] \subset \bR$ has \textit{isolated critical points}. Let $\Psi_\theta^c(\bR \times M)^i \subset \Psi_\theta^c(\bR \times M)$ denote the subspace of those oriented 1-submanifolds $W \subset \bR \times M$ such that the map $x_1 : W \to \bR$ given by projection has isolated critical points.

\begin{prop}
The space $\Psi_\theta^c(\bR \times M)^i$ has the same weak homotopy type as $\Psi_\theta^c(\bR \times M)$. Furthermore there is a natural weak homotopy equivalence $B\mathcal{C}_1^+(M)^i \simeq \Psi_\theta^c(\bR \times M)^i$ and hence $\mathcal{C}_1^+(M)^i \to \mathcal{C}_1^+(M)$ induces an equivalence on classifying spaces.
\end{prop}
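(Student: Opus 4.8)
The plan is to prove the three assertions in the order they are stated, deducing each from the previous one together with the machinery already set up. First I would show that the inclusion $\Psi_\theta^c(\bR \times M)^i \hookrightarrow \Psi_\theta^c(\bR \times M)$ is a weak homotopy equivalence. The natural way to do this is to exhibit, for any compact polyhedron $P$ and any map $P \to \Psi_\theta^c(\bR \times M)$ (and any map of a pair $(P, \partial P)$ where $\partial P$ already lands in the subspace), a homotopy into $\Psi_\theta^c(\bR \times M)^i$ that is constant on the part already satisfying the isolated-critical-points condition. Concretely, a map $P \to \Psi_\theta^c(\bR \times M)$ has a graph $\Gamma \subset P \times \bR \times M$, and after a small perturbation (using the smoothing and perturbation results \cite[Lemma 2.17]{GR-W} quoted in \S\ref{sec:chapter2:Micro-flexibility}) one may arrange that the family of height functions $x_1 : W_p \to \bR$ is a family of Morse functions, hence each has isolated (indeed nondegenerate) critical points; since everything is uniformly compactly supported in the $M$-direction this perturbation can be taken to be supported in a fixed $\bR \times K$, so it stays within $\Psi_\theta^c$. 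This gives a deformation retraction up to homotopy, hence the weak equivalence; the argument is parallel in structure to the proof that $i : D^\perp \to D^\pitchfork$ is an equivalence of categories in \S\ref{sec:chapter2:PosetModels}.

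Second, for the equivalence $B\mathcal{C}_1^+(M)^i \simeq \Psi_\theta^c(\bR \times M)^i$, I would simply rerun the entire poset-model argument of \S\ref{sec:chapter2:PosetModels} with $D$ replaced by $D^i := \Psi_\theta^c(\bR \times M)^i$. One defines $D^{i,\pitchfork}$, $D^{i,\perp}$ exactly as before, with the extra standing condition that the $x_1$-height function has isolated critical points; this condition is preserved by everything in sight. The forgetful functor $u : D^{i,\pitchfork} \to D^i$ is still étale with contractible fibres (Sard applies, and the set of regular values of a function with isolated critical points is still open, dense, and totally ordered), so $Bu$ is a weak equivalence by Lemma \ref{etale}. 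The straightening functor $i : D^{i,\perp} \to D^{i,\pitchfork}$ and its proof go through verbatim, since straightening near a regular value does not create new critical points. Finally $c : D^{i,\perp} \to \mathcal{C}_1^+(M)^i$ lands in the isolated-critical-points subcategory because the cobordisms $A|_{[a,b]}$ it produces are restrictions of a fixed $A$ whose height function already has isolated critical points, and the homotopy $H$ used to show $r_k \circ N_k c \simeq \mathrm{Id}$ likewise preserves this condition (the maps $\varphi_t^{a,b}$ only translate the noncompact ends and so do not affect criticality of $x_1$). The naturality in $M$ is visible from the construction. Concatenating the zig-zag gives $B\mathcal{C}_1^+(M)^i \simeq \Psi_\theta^c(\bR \times M)^i$.

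Third, the last assertion — that $\mathcal{C}_1^+(M)^i \to \mathcal{C}_1^+(M)$ induces an equivalence on classifying spaces — is then immediate: we have a commuting square in which the two horizontal maps are the scanning-type identifications $B\mathcal{C}_1^+(M)^i \simeq \Psi_\theta^c(\bR \times M)^i$ (just proved) and $B\mathcal{C}_1^+(M) \simeq \Psi_\theta^c(\bR \times M)$ (Theorem \ref{CategoryWeakEq}), the right-hand vertical map is the inclusion $\Psi_\theta^c(\bR \times M)^i \hookrightarrow \Psi_\theta^c(\bR \times M)$ shown to be a weak equivalence in the first step, and the left-hand vertical map is $B$ of the functor in question; three of the four maps are weak equivalences, so the fourth is too.

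I expect the main obstacle to be the first step: one must genuinely control the family of height functions over a compact parameter space while staying inside the compactly-supported-in-$M$ subspace and without disturbing the $\theta$-structure, and one must make sure the perturbation is relative to the part of $P$ already in the subspace. Once one has the relative version of a parametrised Morsification statement compatible with the smoothing lemmas of \cite{GR-W}, the rest is bookkeeping, but the interplay between "isolated critical points'' (a $C^2$-open but not $C^0$-open condition) and the colimit topology on $\Psi_\theta^c$ is where care is needed.
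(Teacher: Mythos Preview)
Your proposal is correct and follows essentially the same approach as the paper, which dispatches the proposition in three sentences: the first part ``holds as we have excluded a subspace of infinite codimension'', the second follows from the methods of \S\ref{sec:chapter2:PosetModels}, and the third is immediate. Your Morsification argument is precisely how one would make the ``infinite codimension'' slogan rigorous (a 1-manifold whose height function has a non-isolated critical point must contain an entire arc in a level set $\{a\}\times M$), and your second and third steps spell out exactly what the paper's terse references mean; if anything you are being more careful than the paper, and your flagged concern about the $C^2$-versus-$C^0$ issue is legitimate but handled by the smoothing lemmas you cite.
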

\begin{proof}
The first part holds as we have excluded a subspace of infinite codimension. The methods of \S \ref{sec:chapter2:PosetModels} show the second part, from which the third follows immediately.
\end{proof}

Now we may construct a functor
$$F: \mathcal{C}_1^{+}(M)^i \lra \mathrm{Path}(C^{\pm}(M)).$$
On an object $(a, X, \ell)$ it is given as follows: $X \subseteq M$ is a collection of points in $M$, and $\ell : \epsilon^1 \lra \gamma_1^+$ is a bundle map from the trivial line bundle over $X$ to $\gamma_1^+ \lra \Gr_1^+(\bR^\infty)$. Over each point of $X$ there are two such maps, corresponding to the two trivialisations of the trivial line bundle. The line bundle over $X$ has a preferred trivialisation given by the cobordism direction, so we can attach a sign $\pm 1$ to each point of $X$ depending on whether or not $\ell$ is orientation-preserving on the line bundle over that point. This determines a signed configuration in $M$.

On a morphism $(W, \ell) : (a, X_a, \ell_a) \lra (b, X_b, \ell_b)$ we associate the obvious path of configurations given by all the slices $W \cap \{ t \} \times M$. This clearly gives a path of configurations, except possibly at critical values $t$. In these cases, three things can be happening, depending on the height function $t$ on $W$:
\begin{enumerate}
\item $t$ has a local maximum at that point: this corresponds to two points with opposite signs approaching, so the correct thing to do is remove the point from the configuration. This will make the path be continuous for the topology on $C^\pm(M)$.
\item $t$ has a local minimum at that point: this corresponds to the inverse of the above situation, so we also remove the point from the configuration.
\item $t$ has an inflexion at that point: in this case the sign either side of the point is the same, so for continuity we attach that sign at the point.
\end{enumerate}
These conventions ensure that the functor $F$ is well defined.

\subsubsection{Paths in $C^\pm(M)$ and cobordism of oriented points}

Now that we have the functor $F$, it is tempting to claim that it is in some sense an equivalence of topological categories. We discuss this below, and leave it to the reader to deduce why this cannot be true. 

One difficulty is that connected morphisms (and objects) of $\mathcal{C}_1^+(M)^i$ have orientations which are chosen from large spaces having two contractible components (the spaces $\Bun(TW, \gamma_1^+)$). However we can easily pass to a homotopy equivalent category $\mathcal{C}_1^{rigid}(M)^i$ where orientations are sections of the unit sphere bundle (for a metric inherited from $M$), and so each connected 1-manifold admits precisely two orientations. Thus this is not a real problem.

Taking the category of ``smooth paths" in $C^\pm(M)$ (for a correct notion of smoothness) and the category $\mathcal{C}_1^{rigid}(M)^i$, the functor analogous to $F$ can be made to induce a continuous bijection on spaces of objects and morphisms. It is not however a homeomorphism: the topology on the space of paths is far coarser than that on the cobordism category. For example, consider the morphism $\emptyset \to \emptyset$ in $\mathcal{C}_1^{rigid}(\bR)$ represented by a standard circle embedded in $[0,1] \times \bR$. Considered as a path in $C^\pm(\bR)$ this is homotopic to the constant path at $\emptyset \in C^{\pm}(\bR)$, as it is given by a path from $\emptyset$ to $\{+1,-1\} \subset \bR$ composed with its reverse path. However it is not in the path component of the identity morphism $\mathrm{Id}_\emptyset$ in $\mathcal{C}_1^{rigid}(\bR)$.

In the next section we show that $B\mathcal{C}_1^+(M)^i$ and $B\mathrm{Path}(C^\pm(M)) \simeq C^\pm(M)$ are nevertheless homotopy equivalent.

\subsubsection{Germs of oriented 1-manifolds and $C^\pm(M)$}

For each $\epsilon \in (0, \infty]$ there is a continuous restriction map
$$\Psi_\theta^c(\bR \times M)^i \lra \Psi_\theta^c((-\epsilon, \epsilon) \times M)^i,$$
and we may write $\mathcal{G} := \colim_{\epsilon \to 0} \Psi_\theta^c((-\epsilon, \epsilon) \times M)^i$ for the space of germs of oriented 1-submanifolds near $\{0\} \times M$.

\begin{lem}\label{lem:Germs}
The map $\Psi_\theta^c(\bR \times M)^i \to \mathcal{G}$ sending a submanifold to its germ is a weak homotopy equivalence.
\end{lem}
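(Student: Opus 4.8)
The plan is to show that restricting a compactly-supported oriented $1$-submanifold of $\bR\times M$ with isolated critical points of the height function to a germ near $\{0\}\times M$ loses no weak-homotopical information, essentially because one can stretch a germ back out to a full submanifold in a way that depends continuously (indeed smoothly, in the sense of \S\ref{sec:chapter2:Micro-flexibility}) on parameters. The key point is that a germ near $\{0\}\times M$ of such a $1$-manifold determines, after shrinking, a configuration of points together with local orientation data, and outside a neighbourhood of the critical values the submanifold is a union of graphs of functions to $M$; one then builds an explicit section of the map $\Psi_\theta^c(\bR\times M)^i\to\mathcal{G}$ up to homotopy.

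Concretely, first I would fix a smooth family of diffeomorphisms (or degenerate ``stretching'' maps as in the proof that $c:D^\perp\to\mathcal{C}_\theta(M)$ is an equivalence in \S\ref{sec:chapter2:PosetModels}) $\psi_s:\bR\to\bR$, $s\in(0,1]$, with $\psi_s$ the identity near $0$ and $\psi_s\to$ (constant outside $0$) as $s\to 0$, acting on $\bR\times M$ through the first coordinate. Given a germ, choose $\epsilon$ so small that on $(-2\epsilon,2\epsilon)\times M$ the height function has its only critical point at $0$ (so the slices away from $0$ are honest configurations); then extend the germ over all of $\bR\times M$ by translating the slice $\{t\}\times M\cap W$ for $t$ near $\pm\epsilon$ constantly in the $\bR$-direction, exactly as in the definition of the map $r_k$ and the covariant extension maps of \S\ref{sec:Covariance}. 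This produces a map $\mathcal{G}\to\Psi_\theta^c(\bR\times M)^i$; the composite $\Psi_\theta^c(\bR\times M)^i\to\mathcal{G}\to\Psi_\theta^c(\bR\times M)^i$ is then connected to the identity by a homotopy of the same shape as the homotopy $H$ in the proof that $c$ is an equivalence of categories — pulling back along $(\varphi_t^{a,b})^{-1}$-type maps — and the reverse composite $\mathcal{G}\to\Psi_\theta^c(\bR\times M)^i\to\mathcal{G}$ is handled by the analogous homotopy performed inside the colimit defining $\mathcal{G}$ and then passing to germs. Continuity of all the maps and homotopies involved follows from Lemma \ref{Deformation} and the smoothing results \cite[Lemmas 2.16, 2.17]{GR-W}, since the stretching maps are independent of the $M$-coordinate (so case (i) of Lemma \ref{Deformation} applies where needed) and are eventually constant on any compact subset of $\bR\times M$.

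The main obstacle is the bookkeeping near the critical point $0$: a germ is only defined on some $(-\epsilon,\epsilon)\times M$, and when one stretches the "arms" of the germ out to $\pm\infty$ one must arrange the gluing of the (locally tubular, constant) extensions to the unchanged part of the germ to be smooth and to carry a well-defined $\theta$-structure, exactly the issue resolved in \S\ref{sec:chapter2:PosetModels} for $D^\perp$; the restriction to submanifolds with \emph{isolated} critical points is precisely what guarantees that for small enough $\epsilon$ the region being stretched is critical-point-free and hence is a union of graphs, so this gluing goes through. A secondary technical nuisance is commuting the homotopies past the filtered colimit over $\epsilon\to 0$ and over compact $K\subseteq M$; but both index categories are right filtered, so as in the proof of Corollary \ref{cor:ScanningMapEq} one may appeal to \cite[p. 331]{Bousfield-Kan} to pass between colimits and homotopy colimits, and conclude that the germ map is a weak homotopy equivalence.
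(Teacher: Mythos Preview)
Your intuition is right --- the content of the lemma is that one can ``stretch a germ back out'' --- but the execution has a real gap, and the paper's proof is considerably simpler.

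The gap is in your claim that the extension-by-constant-slices procedure ``produces a map $\mathcal{G}\to\Psi_\theta^c(\bR\times M)^i$''. It does not. A continuous map out of the filtered colimit $\mathcal{G}=\colim_{\epsilon\to 0}\Psi_\theta^c((-\epsilon,\epsilon)\times M)^i$ is a \emph{compatible} family of maps out of each stage, and your construction at stage $\epsilon$ (extend constantly from slices near $\pm\epsilon$) is neither defined on all of $\Psi_\theta^c((-\epsilon,\epsilon)\times M)^i$ (it needs transversality at the chosen slices) nor compatible with restriction to smaller $\epsilon$. The phrase ``given a germ, choose $\epsilon$ so small that \ldots'' is exactly the sort of elementwise choice that fails to assemble into a continuous map. (Incidentally, the condition ``the height function has its only critical point at $0$'' is not what you want either: a generic germ has no critical point at $0$ and may have several nearby; what you actually need is transversality at the slices where you glue.)

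The paper sidesteps all of this. It does not attempt to build a section of the germ map. Instead it observes that for \emph{each fixed} $\epsilon$ the restriction
\[
\Psi_\theta^c(\bR\times M)^i \lra \Psi_\theta^c((-\epsilon,\epsilon)\times M)^i
\]
is already a homotopy equivalence --- this is ``the usual scanning argument'', i.e.\ pull back along a diffeomorphism $(-\epsilon,\epsilon)\cong\bR$ isotopic to the inclusion, which is precisely what your stretching maps $\psi_s$ are doing --- and then invokes \cite[p.~331]{Bousfield-Kan} (which you also cite) to identify the filtered colimit with the homotopy colimit. Your stretching homotopies are the right ingredient, just applied at the wrong level: use them to prove each finite-stage restriction is an equivalence, and the filtered-colimit step finishes the argument in one line, with no need to touch critical points, $r_k$-type extensions, or Lemma~\ref{Deformation}.
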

\begin{proof}
All of the maps $\Psi_\theta^c(\bR \times M)^i \to \Psi_\theta^c((-\epsilon, \epsilon) \times M)^i$ are homotopy equivalences (by the usual scanning argument), and the colimit defining $\mathcal{G}$ is right filtered so is weakly homotopy equivalent to its homotopy colimit \cite[p. 331]{Bousfield-Kan}.
\end{proof}

There is a continuous surjection $f : \mathcal{G} \to C^\pm(M)$ given by intersecting manifolds with $\{0\} \times M$ and discarding any points which are critical points of the height function.

\begin{lem}\label{lem:GermSpaceContractibleFibres}
The map $f$ has contractible fibres.
\end{lem}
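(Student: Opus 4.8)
The plan is to fix a point $c \in C^\pm(M)$, say a configuration with particles at $p_1,\ldots,p_m \in M$ with signs $\varepsilon_1,\ldots,\varepsilon_m$, and to show that $f^{-1}(c) \subset \mathcal{G}$ is contractible by exhibiting an explicit deformation retraction onto a single canonical germ. An element of $f^{-1}(c)$ is (a germ near $\{0\}\times M$ of) an oriented $1$-submanifold $W \subset (-\epsilon,\epsilon)\times M$ with isolated critical points of the height function $x_1$, such that the non-critical intersection points of $W$ with $\{0\}\times M$ are exactly the $p_i$ with the matching signs. So near each $p_i$ the germ either passes through transversally (with the correct sign, i.e.\ slope of the correct orientation), or has finitely many little ``caps'' (local max/min pairs) and ``inflections'' stacked near $p_i$; and away from the $p_i$ the germ consists of finitely many closed arcs entirely contained in a slice, capped off, that contribute no net particle.

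**Key steps in order.** First I would observe that, passing to a smaller $\epsilon$ if necessary, we may assume $W$ meets some neighbourhood $\{0\}\times U_i$ of each $p_i$ in a standard way and is empty near $\{0\}\times M$ away from the $U_i$'s: this is where the colimit over $\epsilon\to 0$ defining $\mathcal{G}$ is essential, since it lets us throw away everything except an arbitrarily thin horizontal slab, and then shrink $\epsilon$ so that no critical value of $x_1$ remains except (possibly) $0$ itself. Second, I would define the canonical germ $W_c$: near each $p_i$ it is the horizontal-tangent-free straight arc $(-\epsilon,\epsilon)\times\{p_i\}$ with orientation determined by $\varepsilon_i$, and it is empty elsewhere near the slice. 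Third, I would construct the contraction: scale the ``height'' coordinate, i.e.\ apply the family of maps $\varphi_s \colon (t,x)\mapsto (s\cdot t, x)$ (for $s\in[0,1]$, rescaled so as to stay within the germ) and pull $W$ back, exactly in the spirit of the rescaling maps $\varphi_t^{a,b}$ and $F_\tau$ used in \S\ref{sec:chapter2:PosetModels} and \S\ref{sec:chapter2:Micro-flexibility}. As $s\to 0$ all the caps, inflections and stray capped arcs are pushed out to $x_1 = \pm\epsilon$ and disappear from the germ, while the transverse strands straighten to the vertical arcs; the $\theta$-structure (i.e.\ the orientation) is carried along and, since an orientation on a connected $1$-manifold is unique up to the choice of direction and the direction is pinned down by the sign at $p_i$, it converges to that of $W_c$. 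One checks this family is continuous into $\mathcal{G}$ and, after the shrinking in step one, is defined uniformly over a neighbourhood of the given germ, so it assembles to a deformation retraction of $f^{-1}(c)$ onto $\{W_c\}$. Continuity of the retraction as a map out of $f^{-1}(c)$ follows as in Lemma \ref{Deformation}(i), since the rescaling is independent of the $M$-coordinate.

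**Main obstacle.** The delicate point is continuity and well-definedness of the contraction near germs where the combinatorial type jumps — for instance a sequence of germs in which a small cap shrinks to nothing, so that the number of critical points is not locally constant on $f^{-1}(c)$. Here the colimit topology on $\mathcal{G}$ is exactly what saves us: a cap of height $\to 0$ is, for small enough $\epsilon$, simply not part of the germ, so these degenerations are already identified in $\mathcal{G}$ and the rescaling homotopy extends continuously across them. Making this precise — i.e.\ verifying that the rescaling homotopy factors through the colimit and is continuous there, using that it is eventually constant on any fixed thin slab — is the part that requires care, but it is the same mechanism already used in Lemma \ref{lem:Germs} and Corollary \ref{cor:ScanningMapEq}. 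A secondary point is to confirm that discarding the critical intersection points (as in the definition of $f$) is compatible with the rescaling, which is immediate since critical points of $x_1$ for $W$ pull back to critical points for $\varphi_s^{-1}(W)$ under the (non-smooth but height-monotone) map $\varphi_s$.
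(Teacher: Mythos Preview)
Your overall strategy---rescale in the height direction to straighten transverse strands and push extraneous pieces out of the germ---is exactly the ``usual scanning argument'' the paper invokes for the second half of its proof. However, your argument has a genuine gap at the point you yourself flag as delicate: critical points of $x_1$ lying \emph{exactly} at height $0$. Your Step~1 only arranges that $0$ is the sole possible critical value, and then in Step~3 you assert that the rescaling $\varphi_s(t,x)=(st,x)$ pushes all caps and inflections out to $x_1=\pm\epsilon$. But a cap with its extremum at $(0,q)$ is fixed by $\varphi_s$ at height $0$ for every $s>0$: pulling back $t=-x^2$ gives $t=-x^2/s$, a cap that gets narrower, not one that leaves the germ. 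Likewise an inflection $t=x^3$ pulls back to $t=x^3/s$, which for all $s>0$ still has a horizontal tangent at the origin, so one does not obtain a continuous family in $\Psi_\theta$ converging to the straight vertical arc. In short, the rescaling alone does not eliminate height-$0$ critical points, and the limit $s\to 0$ is not a continuous path into $\Psi_\theta$ through such germs.

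The paper's proof deals with this by a separate preliminary move: it first pushes each height-$0$ critical point off the slice $\{0\}\times M$ (minima to the right, maxima to the left), giving a deformation retraction of $f^{-1}(C)$ onto the subspace of germs meeting $\{0\}\times M$ \emph{transversely} in exactly the configuration $C$. Only then does it invoke the scanning/rescaling argument, which on that subspace works without difficulty. Your proposal is missing precisely this first step; once you insert it, the rest of your outline is essentially the paper's argument. (A minor related inaccuracy: near each $p_i$ the germ is necessarily a single transverse strand, since $p_i$ is by definition a non-critical intersection point; the extra caps and inflections occur at \emph{other} points of $M$, not ``stacked near $p_i$''.)
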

\begin{proof}
The fibre $f^{-1}(C)$ over a configuration $C$ consists of germs of those oriented 1-submanifolds of $\bR \times M$ with isolated critical points which intersect $\{0\} \times M$ in the configuration $C$ plus perhaps some critical points. We may push the critical points right or left (if they are minima and maxima respectively) to give a deformation retraction of $f^{-1}(C)$ into its subspace of germs of those 1-submanifolds which intersect $\{0\} \times M$ transversely in the configuration $C$. This is contractible by the usual scanning argument.
\end{proof}

If the map $f$ were locally well-behaved, the contractibility of the fibres would imply that it is a homotopy equivalence. However, $f$ has no such good local behaviour as, for example, a neighbourhood of $\emptyset \in C^\pm(M)$ contains configurations of arbitrarily many points, whereas $\emptyset \in \mathcal{G}$ is an open point.

\vspace{2ex}

Recall McDuff's filtration $C^\pm_k(M) \subset C^\pm(M)$ of the configuration space of positive and negative particles \cite{McDuff}, being the subspace of those configurations representable by at most $k$ negative and $k$ positive particles.  Fix a Riemannian metric on $M$, and let $U_k \subset C^\pm_k(M)$ be the open subset consisting of those configurations which are either in $C_{k-1}^\pm(M)$, or such that there is a unique (geodesically) closest pair of particles with opposite signs.

\begin{lem}
The map
$$g:f^{-1}(C_k^\pm(M)) \setminus f^{-1}(C_{k-1}^\pm(M)) \to C^\pm_k(M) \setminus C^\pm_{k-1}(M)$$
induced by $f$ is a quasi-fibration with contractible fibres, so a weak homotopy equivalence.
\end{lem}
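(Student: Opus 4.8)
I would prove the slightly stronger statement that $g$ is a locally trivial fibre bundle. Since its base $C_k^\pm(M)\setminus C_{k-1}^\pm(M)$ is a smooth manifold (see below), hence paracompact, a locally trivial map over it is a Serre fibration, in particular a quasi-fibration; and its fibres are contractible, because for $C$ in this stratum $g^{-1}(C)=f^{-1}(C)$, which is contractible by Lemma~\ref{lem:GermSpaceContractibleFibres}. A quasi-fibration with contractible fibres is a weak homotopy equivalence by the long exact sequence, so all the content is in local triviality.

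First I would pin down the base. A point of $C^\pm(M)$ has a reduced representative with some number $a$ of positive and $b$ of negative particles at distinct points of $M$, and it lies in $C_k^\pm(M)\setminus C_{k-1}^\pm(M)$ precisely when $\max(a,b)=k$. Annihilating a pair strictly raises $\max(a,b)$, so within $C_k^\pm(M)\setminus C_{k-1}^\pm(M)$ the locus $\mathrm{Conf}_{a,b}(M)$ of configurations of a fixed such type is closed, and since there are only finitely many admissible $(a,b)$ it is also open; as $\mathrm{Conf}_{a,b}(M)$ is the quotient of the ordered configuration space of $a+b$ points of $M$ by the free action of $\Sigma_a\times\Sigma_b$, the whole stratum $C_k^\pm(M)\setminus C_{k-1}^\pm(M)$ is a (possibly disconnected) smooth manifold. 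Fixing $C_0\in\mathrm{Conf}_{a,b}(M)$, choosing pairwise disjoint coordinate balls $B_1,\dots,B_{a+b}\subset M$ around its points, and letting $V$ be the set of configurations with exactly one point, of the matching sign, in each $B_i$ and none elsewhere, produces a coordinate chart $V\cong\prod_i B_i$ about $C_0$.

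Next, by a parametrised isotopy extension argument there is a smooth family $C\mapsto\Phi_C\in\Diff(M)$, $C\in V$, supported in $\bigcup_i B_i$, with $\Phi_{C_0}=\mathrm{id}$ and $\Phi_C(C_0)=C$ (inside each $B_i$ take the time-one flow of a compactly supported vector field which near the marked point equals the constant vector aimed at its image). The diffeomorphism $\mathrm{id}_{\bR}\times\Phi_C$ of $\bR\times M$ fixes the height function $x_1$, preserves the property of being closed and contained in some $\bR\times K$ with $K$ compact, and respects orientations and $\theta$-structures; moreover it carries $\{0\}\times M$ to itself and preserves isolated critical points. Hence
$$W\longmapsto\bigl(g(W),\ (\mathrm{id}_{\bR}\times\Phi_{g(W)})^{-1}(W)\bigr)$$
is a continuous bijection $g^{-1}(V)\to V\times f^{-1}(C_0)$ over $V$, with continuous inverse $(C,W')\mapsto(\mathrm{id}_{\bR}\times\Phi_C)(W')$. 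This exhibits $g$ as locally trivial, and the proof is complete.

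The step I expect to demand the most care is the continuity of these transport maps on the germ space $\mathcal{G}=\colim_\epsilon\Psi_\theta^c((-\epsilon,\epsilon)\times M)^i$, i.e.\ that a smooth family of ambient diffeomorphisms acts continuously on spaces of $\theta$-submanifolds; this is the same kind of assertion as the continuity of $\Emb(N,M)\times\Psi_\theta(M)\to\Psi_\theta(N)$ established in \S\ref{sec:chapter2:SpacesOfManifolds}, and should be dispatched the same way. If one prefers not to appeal to local triviality, the alternative is to verify the quasi-fibration property directly over each chart $V$ and patch using the standard local-to-global lemmas for quasi-fibrations, in the spirit of \cite{McDuff}.
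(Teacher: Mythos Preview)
Your proof is correct and in fact establishes a stronger conclusion than the paper's: you show $g$ is a locally trivial fibre bundle, whereas the paper verifies only the Dold--Thom quasi-fibration criterion. The paper's argument chooses, for each $C$ in the stratum, a neighbourhood $V$ with a smooth deformation retraction $\varphi_t$ onto $\{C\}$, and then lifts this to a covering homotopy $\overline{\varphi}_t$ on $g^{-1}(V)$ by first pushing away critical points lying in $\{0\}\times M$ and then ``translating the intersection points of germs according to $\varphi_t$''. Since all fibres are contractible, the induced maps on fibres are automatically weak equivalences, and the local criterion for quasi-fibrations applies. Your approach replaces this covering homotopy with an honest local trivialisation built from an ambient isotopy $\Phi_C$ of $M$; conceptually the two are the same manoeuvre (move the configuration by an ambient motion of $M$), but you package it more cleanly and get a sharper statement. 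Your description of the stratum as a disjoint union over $(a,b)$ with $\max(a,b)=k$ is also more accurate than the paper's ``open submanifold of $M^{2k}$'', which strictly speaking only describes the $(k,k)$ component.

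One small slip: you write ``annihilating a pair strictly raises $\max(a,b)$'', but annihilation sends $(a,b)$ to $(a-1,b-1)$ and hence \emph{lowers} $\max(a,b)$. The conclusion you draw is unaffected, since either way annihilation takes you out of the stratum $\max(a,b)=k$, so within the stratum no annihilation or creation occurs and the pieces $\mathrm{Conf}_{a,b}(M)$ are open and closed. You are also right to flag continuity of the $\Diff(M)$-action on the germ space $\mathcal{G}$ as the point needing care; it does follow from the continuity results of \S\ref{sec:chapter2:SpacesOfManifolds} together with compatibility with the restriction maps defining the colimit.
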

\begin{proof}
The fibres are contractible as proved above. In the space $C^\pm_k(M) \setminus C^\pm_{k-1}(M)$ positive and negative particles cannot annihilate or be created, so this is just a configuration space with points labelled by $\pm 1$. As such this space has an obvious smooth structure, as an open submanifold of $M^{2k}$.

Let $\{C\} \in C^\pm_k(M) \setminus C^\pm_{k-1}(M)$ have an open neighbourhood $V$ which smoothly deformation retracts to $\{C\}$, via a map $\varphi_t : V \to V$. Define a homotopy $\overline{\varphi}_t : g^{-1}(V) \to g^{-1}(V)$ by first removing all critical points that lie in $\{0\} \times M$ as in the proof of Lemma \ref{lem:GermSpaceContractibleFibres}, and then translating the intersection point of germs according to $\varphi_t$. This gives a deformation retraction of $g^{-1}(V)$ into $g^{-1}(C)$, so $g$ is a local quasi-fibration, and hence a quasi-fibration.
\end{proof}

Note that $U_k$ has a canonical deformation retraction $h_t$ onto $C_{k-1}^\pm(M)$ as follows. On $C_{k-1}^\pm(M)$ it is the identity; on $U_k \setminus C_{k-1}^\pm(M)$ for $t \in [0,\tfrac{1}{2}]$ do nothing, and for $t \in [\tfrac{1}{2},1]$ slide the unique closest pair of particles with opposite signs together (quadratically fast) along the geodesic joining them, so that they cancel in the middle. For each $C \in U_k \setminus C_{k-1}^\pm(M)$ this homotopy gives a path which has a graph $\Gamma_C \subset [\tfrac{1}{2},1] \times M$ whose closure $\overline{\Gamma}_C \subset [\tfrac{1}{2},1] \times M$ is a smooth submanifold.

The open subset $f^{-1}(U_k) \subset \mathcal{G}$ consists of those germs consisting of a collection of critical points along with
\begin{enumerate}[(i)]
	\item either at most $k-1$ of each positive and negative transverse crossings,
	\item or $k$ of each positive and negative transverse crossings such that there is a unique pair that is geodesically closest.
\end{enumerate}
Define a homotopy $H_t$ on $f^{-1}(U_k)$ as follows. On $f^{-1}(C_{k-1})$ it is the identity; for $ x \in f^{-1}(U_k) \setminus f^{-1}(C_{k-1}^\pm(M))$ for $t \in [0, \tfrac{1}{2}]$ push the critical points either left or right (if they are maxima and minima respectively) to remove them, and arrange the germs of the cancelling pair to be those of $\overline{\Gamma}_{f(x)}|{\{\tfrac{1}{2}\} \times M}$, and for $t \in [\tfrac{1}{2},1]$ take the germ of the manifold $\overline{\Gamma}_{f(x)}$ near $\{t\} \times M$.

Now Lemma 7.2 of \cite{IteratedLoopSpaces} applies to the data $(f, C_k^\pm(M), U_k, h_t, H_t)$ and implies that $f$ is a quasi-fibration with contractible fibres, and hence a weak homotopy equivalence. Hence by Lemma \ref{lem:Germs},

\begin{thm}\label{EquivalenceOfCategoriesForConfigurationSpaces}
The map $\Psi_\theta^c(\bR \times M)^i \to C^\pm(M)$ that intersects an oriented 1-submanifold with $\{0\} \times M$ and discards critical points is a weak homotopy equivalence. Hence the functor $F$ induces an equivalence $B\mathcal{C}_1^+(M) \simeq C^\pm(M)$.
\end{thm}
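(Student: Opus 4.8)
The first assertion is essentially assembled from the lemmas just proved. The plan is to factor the slicing map $\Psi_\theta^c(\bR \times M)^i \to C^\pm(M)$ as the germ map $\Psi_\theta^c(\bR \times M)^i \to \mathcal{G}$ followed by $f : \mathcal{G} \to C^\pm(M)$. The germ map is a weak homotopy equivalence by Lemma~\ref{lem:Germs}, so it suffices to treat $f$. For that I would invoke May's quasi-fibration recognition principle (Lemma 7.2 of \cite{IteratedLoopSpaces}) on the data $(f, C_k^\pm(M), U_k, h_t, H_t)$ assembled above: Lemma~\ref{lem:GermSpaceContractibleFibres} supplies the contractible fibres, the preceding lemma gives that $f$ restricts to a quasi-fibration with contractible fibres over each open stratum $C_k^\pm(M) \setminus C_{k-1}^\pm(M)$, and the deformation retractions $h_t$ of $U_k$ onto $C_{k-1}^\pm(M)$ together with their lifts $H_t$ furnish the remaining hypotheses. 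The output is that $f$ is a quasi-fibration over all of $C^\pm(M)$, hence, having contractible fibres, a weak homotopy equivalence; composing with the germ map gives the first claim. The only genuine bookkeeping here is verifying the hypotheses of May's lemma (openness of the $U_k$, the identity $U_k \cup C_{k-1}^\pm(M) = C_k^\pm(M)$, and compatibility of $h_t$ with $H_t$ over preimages of the retracting neighbourhoods), and these are already present in the statements set up before the theorem.

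For the second assertion I would transport this back along the poset-model picture of \S\ref{sec:chapter2:PosetModels}. The idea is to rerun the zig-zag $\Psi_\theta^c(\bR \times M) = D \overset{Bu}\lla BD^\pitchfork \overset{Bi}\lla BD^\perp \overset{Bc}\lra B\mathcal{C}_\theta(M)$ of Theorem~\ref{CategoryWeakEq} with every term restricted to the locus of isolated critical points of the height function (the sub-posets $D^{\pitchfork,i}$, $D^{\perp,i}$ and the subcategory $\mathcal{C}_1^+(M)^i$); each of the three maps $Bu$, $Bi$, $Bc$ remains a weak homotopy equivalence by the same arguments, which is precisely the content of the earlier Proposition identifying $B\mathcal{C}_1^+(M)^i \simeq \Psi_\theta^c(\bR \times M)^i$. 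Composing with the inclusion-equivalence $B\mathcal{C}_1^+(M)^i \to B\mathcal{C}_1^+(M)$ and with the first assertion already yields $B\mathcal{C}_1^+(M) \simeq C^\pm(M)$ as spaces.

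It then remains to check that this equivalence is the one induced by the comparison functor $F$. Here I would use the fact already recorded in this section that the Moore path category satisfies $B\mathrm{Path}(C^\pm(M)) \simeq C^\pm(M)$, the equivalence being realised by the source-evaluation functor, and then exhibit a homotopy-commutative square with top edge $BF : B\mathcal{C}_1^+(M)^i \to B\mathrm{Path}(C^\pm(M))$, right edge $B\mathrm{Path}(C^\pm(M)) \to C^\pm(M)$, left edge the poset-model equivalence $B\mathcal{C}_1^+(M)^i \simeq \Psi_\theta^c(\bR \times M)^i$, and bottom edge the slicing map. Commutativity up to homotopy holds because both ways around compute ``evaluate a family of oriented $1$-manifolds at a fixed height and delete critical points'', the two routes differing only in the choice of height, and sliding that height provides the homotopy; this is cleanest to verify on the model $D^{\perp,i}$, where the functor $c$ tautologically records the slice at the marked height. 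Since three edges of the square are weak homotopy equivalences, the fourth, namely $BF$, is too. I expect this last step — pinning down the homotopy-commuting square precisely and invoking $B\mathrm{Path}(X) \simeq X$ in a form compatible with $F$ — to be the fussiest part of the argument; everything else is assembly of results already established.
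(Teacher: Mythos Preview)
Your proposal is correct and follows the paper's approach exactly: the first assertion is proved in the paper precisely by composing Lemma~\ref{lem:Germs} with the application of May's Lemma~7.2 to the data $(f, C_k^\pm(M), U_k, h_t, H_t)$, and the second assertion is deduced from the earlier Proposition identifying $B\mathcal{C}_1^+(M)^i \simeq \Psi_\theta^c(\bR \times M)^i$. Your final paragraph, verifying via a homotopy-commutative square that the equivalence is actually realised by $BF$, is more careful than the paper itself, which simply asserts ``Hence the functor $F$ induces an equivalence'' without spelling out this compatibility check.
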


Applying Theorem \ref{MainTheorem} now implies that
$$C^\pm(M) \simeq \Gamma_c(T_{SO(1)}^{fib}(TM) \to M)$$
which is McDuff's Theorem 1.3, after noting that the space she denotes $X_n$ is $\Th(TS^n \to S^n) \simeq T_{SO(1)}(\bR^n)$.

\begin{cor}
By the discussion in \S\ref{sec:HomotopyInvariance}, the best approximation by a homotopy invariant functor to $C^\pm(-)$ is $\Omega^{\infty-1} \MT{SO}{1} \wedge -_+ \simeq Q(-_+)$, i.e.\ stable homotopy theory.
\end{cor}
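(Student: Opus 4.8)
The plan is to feed the two facts already established into the framework of \S\ref{sec:HomotopyInvariance}. By Theorem \ref{EquivalenceOfCategoriesForConfigurationSpaces} the comparison functor $F$ induces a natural weak homotopy equivalence $B\mathcal{C}_1^+(-) \simeq C^\pm(-)$ of functors on the category $\mathbf{Man}$ of finite-dimensional submanifolds of $\bR^\infty$ and closed embeddings, where $\mathcal{C}_1^+ = \mathcal{C}_\theta$ for the tangential structure $\theta : \Gr_1^+(\bR^\infty) \to \Gr_1(\bR^\infty)$ of an orientation on a $1$-manifold. Hence $C^\pm(-)$ and $B\mathcal{C}_\theta(-)$ are isomorphic in $\mathrm{hoFun}(\mathbf{Man}, \mathbf{Top})$, so in particular they have the same best homotopy-invariant approximation from the right. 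By \S\ref{sec:HomotopyInvariance} this approximation is the natural transformation to $\Omega^{\infty-1}\MTtheta \wedge -_+$, so all that remains is to identify the spectrum $\MT{SO}{1} = \MTtheta$.

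For this, observe that $\X = \Gr_1^+(\bR^\infty)$, the space of oriented lines in $\bR^\infty$, is the infinite sphere $S^\infty$ (send an oriented line to the unit vector spanning it in the positive direction), hence contractible; and over it the pulled-back tautological bundle $\theta^*\gamma_1$ is a trivial line bundle (under $\X \cong S^\infty$ the fibre over $v$ is $\bR v$, with nowhere-vanishing section $v \mapsto v$). Therefore $\MT{SO}{1} = \Th(-\theta^*\gamma_1 \to \X)$ is the Thom spectrum of a virtual bundle of rank $-1$ over a contractible space, so $\MT{SO}{1} \simeq \Sigma^{-1}\bS$, the $(-1)$-fold shift of the sphere spectrum. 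Consequently, naturally in $M$,
\[
\Omega^{\infty-1}\MT{SO}{1} \wedge M_+ \;\simeq\; \Omega^\infty\bigl(\Sigma(\Sigma^{-1}\bS \wedge M_+)\bigr) \;=\; \Omega^\infty \Sigma^\infty M_+ \;=\; Q(M_+),
\]
which is the homology theory represented by the sphere spectrum, i.e.\ stable homotopy. Together with the first paragraph this is the statement of the corollary.

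There is no genuine obstacle here, since both inputs are quotations of results proved above and the computation of $\MT{SO}{1}$ is elementary; one should only take care that the equivalence of Theorem \ref{EquivalenceOfCategoriesForConfigurationSpaces} is natural for closed embeddings (so that it transports the universal property of the homotopy-invariant approximation and not merely the pointwise homotopy type), and that the shift conventions match, so that $\Omega^{\infty-1}$ applied to $\Sigma^{-1}\bS \wedge M_+$ really gives $Q(M_+)$ rather than a loop or suspension of it. As a sanity check one can record the unstable counterpart, compatibly with the remark following Theorem \ref{EquivalenceOfCategoriesForConfigurationSpaces}: since $\Gr_1^+(\bR^{n+1}) = S^n$ and $\gamma_1$ is trivial there, $T_{SO(1)}(\bR^n) = \Th(\gamma_1^\perp \to \Gr_1^+(\bR^{n+1})) \cong \Th(TS^n \to S^n)$, and the stable identification above is the colimit of these over $n$.
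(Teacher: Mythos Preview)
Your argument is correct and follows exactly the route the paper intends: the corollary has no separate proof in the paper but is stated as an immediate consequence of \S\ref{sec:HomotopyInvariance} together with the equivalence $C^\pm(-)\simeq B\mathcal{C}_1^+(-)$, and you have simply spelled out the implicit identification $\MT{SO}{1}\simeq\Sigma^{-1}\bS$. Your caveat about naturality of Theorem \ref{EquivalenceOfCategoriesForConfigurationSpaces} under closed embeddings is well taken and is the only point not made entirely explicit in the paper.
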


One may perform a similar analysis for the spaces $C^\pm(M;X_+)$ of signed configurations in $M$ labelled by a space $X_+$ with disjoint basepoint. The tangential structure in this case is $\theta: \Gr_1^+(\bR^\infty) \times X \to \Gr_1(\bR^\infty)$ and we can prove
$$C^\pm(M;X) \simeq B\mathcal{C}_\theta(M) \simeq \Gamma_c(T^{fib}_{SO(1) \times X}(TM) \to M)$$
where $T_{SO(1) \times X}(\bR^n) \simeq \Th(TS^n \to S^n) \wedge X_+$. Thus the best approximation by a homotopy invariant functor to $C^\pm(-;X)$ is $Q(X_+ \wedge -_+)$, the homology theory represented by the suspension spectrum of $X_+$.

\subsubsection{Concluding remarks}
The discussion in this section suggests the following question: is there a space $C^{sing}_{d-1}(M)$ whose points are oriented $(d-1)$-dimensional \textit{singular} submanifolds of $M$, topologised in such a way that singularities can be resolved continuously in this space, and such that the natural functor
$$C_d^+(M)^i \lra \mathrm{Path}(C^{sing}_{d-1}(M))$$
gives an equivalence of classifying spaces? The singularities involved should be precisely those which occur as level sets of smooth functions with isolated critical points, so the set $C^{sing}_{d-1}(M)$ is essentially determined. Such a space would have its homotopy type described by Theorem \ref{MainTheorem}.

One may then ask if it admits a compatible smooth structure; if so the locus of singular manifolds ought to have infinite codimension, and its complement should be $\coprod_{[X]} \Emb(X, M)/\Diff^+(X)$ where the disjoint union is over diffeomorphism classes of $(d-1)$-dimensional non-singular submanifolds of $M$. Thus $C^{sing}_{d-1}(M)$ could be interpreted as a partial compactification of this space of smooth submanifolds of $M$.

Such a discussion should be related to the work of R. Sadykov \cite{SadykovStableCharClasses} and his notion of marked fold maps.

\section{Group completion of the configuration-space monoid}\label{sec:ConfigurationSpaceMonoid}

We wish to consider the category $\mathcal{C}^X_0(M)$, which has objects $-1$-submanifolds of $\{ a \} \times M$, so simply a copy of $\mathbb{R}$.  A morphism from $a$ to $b$ is a finite configuration of points in $(a, b) \times M$ labelled by points of $X$. This is simply the cobordism category corresponding to the tangential structure $\theta : \Gr_0(\bR^\infty) \times X \to \Gr_0(\bR^\infty)$. In the spirit of Segal \cite{SegalConfigurationSpace} we define the following configuration spaces.

\begin{defn}
Let $\tilde{C}_k(Y)$ be the space of \textit{ordered configurations of cardinality $k$} in $Y$, topologised as a subset of $Y^k$. Let $C(Y ; X)$ be the space of \textit{finite configurations} in $Y$ labelled by $X$, topologised as the disjoint union $\coprod \tilde{C}_k(Y) \times_{\Sigma_k} X^k$.

If $Y$ is of the form $M \times \mathbb{R}$, let $C'(M \times \mathbb{R} ; X)$ be the space of pairs $\{ (c, t) \in C(M \times \mathbb{R} ; X) \times \mathbb{R}^+ | c \subset (0, t) \}$, topologised as a subspace of $C(M \times \mathbb{R} ; X) \times \mathbb{R}^+$. This is a topological monoid under juxtaposition and is homotopy equivalent to $C(M \times \mathbb{R} ; X)$, as in \cite{SegalConfigurationSpace}.
\end{defn}

If we consider $C'(M \times \mathbb{R} ; X)$ as a topological category, there is a continuous functor
$$F : \mathcal{C}^X_0(M) \lra C'(M \times \mathbb{R} ; X)$$
that sends a morphism $C \subset (a,b) \times M$ labelled by $X$ to the translated configuration $C - a \subset (0, b-a) \times M$ with the same labelling. This is clearly functorial and is continuous in the topology given. One may prove the analogue of Theorem \ref{EquivalenceOfCategoriesForConfigurationSpaces} for this functor directly.

\begin{thm}
The functor $F$ induces a weak homotopy equivalence on classifying spaces, $BF: B \mathcal{C}_0^X(M) \overset{\simeq}\to B C'(M \times \mathbb{R} ; X)$.
\end{thm}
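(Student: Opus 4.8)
The plan is to repeat, in this zero-dimensional setting, exactly the chain of reductions used for Theorem \ref{EquivalenceOfCategoriesForConfigurationSpaces}, but now the intermediate analysis is much easier because there are no critical points to contend with. First I would invoke Theorem \ref{CategoryWeakEq} (whose proof via the poset models $D^\pitchfork, D^\perp$ goes through verbatim for $\theta : \Gr_0(\bR^\infty)\times X \to \Gr_0(\bR^\infty)$) to get a weak equivalence $B\mathcal{C}_0^X(M) \simeq \Psi_\theta^c(\bR \times M)$. A point of $\Psi_\theta^c(\bR\times M)$ is a closed $0$-submanifold of $\bR \times \mathring M$, contained in some $\bR \times K$, together with a $\theta$-structure, i.e.\ a point of $X$ at each point of the submanifold; since $K$ is compact and the submanifold is closed in $\bR\times K$, it is a finite configuration. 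So the real content is to produce a weak equivalence $\Psi_\theta^c(\bR\times M) \simeq BC'(M\times\bR;X)$ compatible with $F$.

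Next I would use the ``germ'' trick of Lemma \ref{lem:Germs}: by the scanning argument each restriction $\Psi_\theta^c(\bR\times M) \to \Psi_\theta^c((-\epsilon,\epsilon)\times M)$ is a homotopy equivalence, and the right-filtered colimit $\mathcal{G}_0 := \colim_{\epsilon\to 0}\Psi_\theta^c((-\epsilon,\epsilon)\times M)$ is weakly equivalent to its homotopy colimit \cite[p.\ 331]{Bousfield-Kan}, so $\Psi_\theta^c(\bR\times M)\simeq \mathcal{G}_0$. There is a continuous surjection $\mathcal{G}_0 \to C(M;X)$ given by intersecting with $\{0\}\times M$; in dimension zero there are no critical points, so for a generic germ this is already transverse, and the fibre over a configuration $C$ is the space of germs of $0$-submanifolds of $\bR\times M$ meeting $\{0\}\times M$ exactly in $C$, which deformation retracts (slide everything onto the slice) to a point. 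One checks this map is a quasi-fibration by the same local argument as in the proof of Lemma for $g$ above — and here it is genuinely simpler, since on each stratum (fixed cardinality) $C(M;X)$ is just an open manifold with a bundle of labels, with no annihilation phenomena, and the germ space is locally a product. Hence $\mathcal{G}_0 \to C(M;X)$ is a weak homotopy equivalence, giving $\Psi_\theta^c(\bR\times M)\simeq C(M;X)\simeq C'(M\times\bR;X)$, the last step by the monoid/bar-construction comparison recalled in the definition (following Segal \cite{SegalConfigurationSpace}, $BC'(M\times\bR;X)\simeq C'(M\times\bR;X)$ since $C'$ is a monoid which is group-like — or rather, one identifies $BC'$ with $C'$ via the standard ``$B$ of a translation monoid'' argument). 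Finally I would check that $F$ realises this composite: on objects $B\mathcal{C}_0^X(M)\to BC'$ it is an equivalence essentially by inspection, and on classifying spaces the functor $F$ fits into the zig-zag above, so $BF$ is a weak equivalence.

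The main obstacle is the same one encountered throughout \S\ref{sec:ConfigurationsOfSignedPoints}: the comparison map $\mathcal{G}_0 \to C(M;X)$ is not a fibration in any naive sense (a neighbourhood of the empty configuration in $C(M;X)$ contains configurations of arbitrarily many points, while $\emptyset\in\mathcal{G}_0$ is an isolated point), so one cannot conclude from contractible fibres alone. One must run the stratum-by-stratum quasi-fibration argument — McDuff's Lemma 7.2 of \cite{IteratedLoopSpaces} applied to the cardinality filtration of $C(M;X)$, with the straightening homotopies $h_t$ (push a point off the slice) and $H_t$ (the germ version) — exactly as in the proof of Theorem \ref{EquivalenceOfCategoriesForConfigurationSpaces}, but without any of the critical-point bookkeeping. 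Everything else (functoriality of $F$, continuity, the poset-model input, the Bousfield–Kan colimit comparison) is routine given the results already established, which is why the paper says one may prove it ``directly.''
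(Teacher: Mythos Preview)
Your plan has a genuine gap at its central step. The ``intersect with $\{0\}\times M$'' map $\mathcal{G}_0 \to C(M;X)$ does not make sense when $d=0$: a $0$-dimensional submanifold of $\bR\times M$ is transverse to the hypersurface $\{0\}\times M$ precisely when it \emph{misses} it (the expected dimension of the intersection is $-1$), so on a dense open set your map is constant at $\emptyset$ and the fibres are not contractible. The analogy with \S\ref{sec:ConfigurationsOfSignedPoints} breaks exactly here: there one slices a $1$-manifold by a hypersurface to obtain a $0$-manifold, whereas slicing a $0$-manifold by a hypersurface yields nothing. Two further errors compound this: an element of $\Psi_\theta^c(\bR\times M)$ need not be a finite configuration (e.g.\ $\bZ\times\{m\}$ is closed in $\bR\times K$, which is not compact); and $C'(M\times\bR;X)$ is \emph{not} group-like --- its $\pi_0$ is graded by cardinality --- so $BC'$ is connected while $C'$ is not, and they are certainly not equivalent. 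Consequently the chain $\Psi_\theta^c(\bR\times M)\simeq C(M;X)\simeq C'\simeq BC'$ fails at every link.

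The intended ``direct'' argument is simpler and avoids quasi-fibrations entirely: one checks that $F$ is a \emph{levelwise} homotopy equivalence on nerves. On objects, $N_0\mathcal{C}_0^X(M)=\bR$ (the only $(-1)$-manifold is empty) and $N_0C'=\ast$, so $N_0F$ is an equivalence. A non-identity morphism from $a$ to $b$ is a finite $X$-labelled configuration in $(a,b)\times\mathring{M}$ (the collar condition forces it away from the ends), so $N_1F\colon(a,b,W)\mapsto(W-a,\,b-a)$ is an $\bR$-bundle (by the initial translation $a$) over $C'(M\times\bR;X)$, hence an equivalence. The same holds for each $N_k$: both sides are $k$-tuples of labelled configurations in intervals, and $N_kF$ forgets only the overall translation $a_0\in\bR$. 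Thus $F$ is a degreewise equivalence of simplicial spaces and $BF$ is a weak equivalence.
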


For a vector bundle $V$, let $V^+$ denote the bundle of pointed spheres obtained by taking the fibrewise one-point compactification, and let $V^+ \wedge_{fib} Y$ denote the space obtained by taking the fibrewise smash product with a pointed space $Y$. Theorem \ref{MainTheorem} establishes the weak homotopy equivalence
$$B \mathcal{C}^X_0(M) \simeq \Gamma_c ((TM \oplus \epsilon^1)^+ \wedge_{fib} (X_+) \to M),$$
so in particular we have
$$B \mathcal{C}^X_0(\mathbb{R}^{n-1}) \simeq \Omega^{n-1} S^{n} \wedge X_+.$$
Thus there is a weak homotopy equivalence
$$\Omega B C'(\mathbb{R}^n ; X) \simeq \Omega^{n} \Sigma^{n} X_+$$
which identifies the group-completion of the monoid $C'(\mathbb{R}^n ; X)$ as $\Omega^{n} \Sigma^{n} X_+$.

The discussion in \S \ref{sec:HomotopyInvariance} shows that the best approximation to $B\mathcal{C}_0^X(-) \simeq BC'(-;X)$ by a homotopy invariant functor is $Q(\Sigma -_+ \wedge X_+)$.

\begin{rem}
In \cite{SegalConfigurationSpace}, using the notation from that paper, Segal shows that $\Omega B C'_n(X) \simeq \Omega^n \Sigma^n X$, and there is no additional basepoint. The difference is that Segal allows labels in $X$ to move to some basepoint and the point vanish from a configuration, and we do not. In his framework this simply corresponds to making the basepoint be a disjoint point, and then we recover our notion of configuration spaces.
\end{rem}

\section{Group completion of the braid monoid}\label{sec:BraidMonoid}
In this section we outline an application of the methods of this paper that does not fit into the framework of Theorem \ref{MainTheorem}, but can be proved without difficulty by similar means.

Let us write $\mathcal{C}^{Br}_{1, 3}$ for the cobordism category of braids, that is the category with objects configurations in $\{ a \} \times \mathbb{R}^2$ and with morphism oriented 1-manifolds whose tangent vectors always lie in the forwards hemisphere of $S^2 = \Gr_1^+(\bR^3)$. We topologise this as a subcategory of $\mathcal{C}_1^+(\bR^2)$.

We are also interested in the cobordism category with morphisms oriented 1-manifolds in $\mathbb{R}^3$ with the following structure. An oriented 1-manifold $W \subseteq \mathbb{R}^3$ determines a Gauss map $\tau_W : W \to \Gr^+_1(\mathbb{R}^3)$, and this Grassmannian has a pathspace fibration $\theta : P\Gr^+_1(\mathbb{R}^3) \to \Gr^+_1(\mathbb{R}^3) = S^2$ of paths starting in the forwards hemisphere. Equip $W$ with a lift $\ell$ of $\tau_W$ up this fibration, and call such a lift a \textit{path structure} on $TW$.

Let the category of homotopy braids, $C^{hBr}_{1,3}$, be the cobordism category defined using this structure, with objects configurations in $\{ a \} \times \mathbb{R}^2$ with the forwards orientation of the line bundle over them and a path structure on this oriented line bundle, and morphisms collared 1-manifolds in $[a, b] \times \mathbb{R}^2$ with a path structure that agrees with the given path structures at $a$ and $b$.

\vspace{2ex}

There is a continuous functor
$$F : \mathcal{C}^{Br}_{1, 3} \lra \mathcal{C}^{hBr}_{1, 3}$$
assigning to each point on a braid the path structure given by the constant path at its forwards tangent vector. It is well known that $B\mathcal{C}^{Br}_{1, 3} \simeq \coprod_{n \geq 0} B \beta_n$, the disjoint union of the classifying spaces of the braid groups, which is a monoid under disjoint union of braids. On the other hand, applying the methods of this paper in this case shows that
$$B\mathcal{C}^{hBr}_{1, 3} \simeq \Gamma_c(\mathbb{R}^2 \times \Th(\theta^* (\gamma^+_1)^{\perp} \to P\Gr^+_1(\mathbb{R}^3)) \lra \mathbb{R}^2) \simeq \Omega^2 S^2$$
and so on classifying spaces we obtain a map
$$\coprod_{n \geq 0} B \beta_n \lra \Omega^2 S^2.$$
It is known by work of Segal \cite{SegalConfigurationSpace} that the group completion of the braid monoid is $\Omega^2 S^2$. Below we show that this map \textit{is} the group-completion map.
\begin{thm}
This map is the group-completion of the braid monoid.
\end{thm}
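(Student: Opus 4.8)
The plan is to identify the map $BF\colon\coprod_{n\geq 0}B\beta_n\to\Omega^2S^2$ with the classical scanning (``electric field'') map out of the configuration space $C(\bR^2)$ of finite unordered subsets of the plane, and then to quote the theorem of Segal \cite{SegalConfigurationSpace} --- equivalently, the case $n=2$, $X=\ast$ of the discussion in \S\ref{sec:ConfigurationSpaceMonoid} --- which states precisely that this scanning map exhibits $\Omega^2S^2$ as the group completion of $\coprod_nB\beta_n$. Thus everything reduces to matching up the two maps.

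First I would pin down the two identifications. On the source, the height function on a braid in $[a,b]\times\bR^2$ has no critical points at all, so a morphism of $\mathcal{C}^{Br}_{1,3}$ is literally the trace of a continuous path of configurations in $\bR^2$; this produces a functor $\mathcal{C}^{Br}_{1,3}\to\mathrm{Path}(C(\bR^2))$ and, by the (simpler, unsigned) analogue of the analysis in \S\ref{sec:ConfigurationsOfSignedPoints}, an induced weak equivalence $B\mathcal{C}^{Br}_{1,3}\simeq B\mathrm{Path}(C(\bR^2))\simeq C(\bR^2)$ under which disjoint union of braids corresponds to juxtaposition of configurations. Since $C(\bR^2)=\coprod_nC_n(\bR^2)$ with each $C_n(\bR^2)$ a $K(\beta_n,1)$, this recovers the well-known equivalence $B\mathcal{C}^{Br}_{1,3}\simeq\coprod_nB\beta_n$. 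On the target, I would unwind the equivalence $B\mathcal{C}^{hBr}_{1,3}\simeq\Omega^2S^2$ obtained from the methods of \S\S\ref{sec:chapter2:PosetModels}--\ref{sec:HPrincipleApplication}: one has $B\mathcal{C}^{hBr}_{1,3}\simeq\Psi_\theta^c(\bR\times\bR^2)\simeq\Gamma_c(\bR^2\times\Th(\theta^*(\gamma_1^+)^\perp\to P\Gr_1^+(\bR^3))\to\bR^2)$, and since $P\Gr_1^+(\bR^3)$ is contractible --- it consists of paths in $S^2=\Gr_1^+(\bR^3)$ starting in a hemisphere --- the rank-two bundle $\theta^*(\gamma_1^+)^\perp$ is trivialised, so $\Th(\theta^*(\gamma_1^+)^\perp\to P\Gr_1^+(\bR^3))\simeq S^2$ and the section space is $\Gamma_c(\bR^2\times S^2\to\bR^2)=\Omega^2S^2$.

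The heart of the argument is then to show that the composite
$$C(\bR^2)\;\simeq\;B\mathcal{C}^{Br}_{1,3}\overset{BF}{\lra}B\mathcal{C}^{hBr}_{1,3}\;\simeq\;\Psi_\theta^c(\bR\times\bR^2)\overset{\mathrm{scan}}{\lra}\Omega^2S^2$$
is the electric field map. Here $F$ carries a configuration $X\subset\bR^2$, regarded as the vertical braided $1$-manifold $\bR\times X\subset\bR\times\bR^2$, to the same $1$-manifold equipped with the constant path structure at the forwards tangent vector, a point of $\Psi_\theta^c(\bR\times\bR^2)$; compatibly with the poset models of \S\ref{sec:chapter2:PosetModels} and the germ picture of \S\ref{sec:ConfigurationsOfSignedPoints}, I would check that scanning this $1$-manifold gives the section of $\bR^2\times S^2\to\bR^2$ which near each point of $X$ is a single bump of degree one and is the basepoint elsewhere, i.e.\ exactly the electric field of $X$. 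The only non-formal input is the one-strand computation: that under the scanning equivalence $\Psi_\theta(\bR^3)\simeq\Th(\theta^*(\gamma_1^+)^\perp\to P\Gr_1^+(\bR^3))\simeq S^2$ coming from \cite[Theorem 3.22]{GR-W}, a single straight strand through the origin carrying the constant path structure maps to the non-basepoint ``zero'' point of $S^2$, so that each strand of a configuration contributes a generator of $\pi_2(S^2)$ rather than a null or degenerate class. Everything else --- continuity, the behaviour away from the strands, and compatibility of the juxtaposition and loop-concatenation $H$-space structures (so that $BF$ is the scanning map as an $H$-map, not just on underlying spaces) --- is formal, since every map in sight is induced by a restriction of the sheaf $\Psi_\theta$ or by a disjoint-union inclusion, exactly as in \S\S\ref{sec:ConfigurationsOfSignedPoints} and \ref{sec:ConfigurationSpaceMonoid}.

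The main obstacle is precisely this bookkeeping: transporting a vertical strand through the chain of weak equivalences $B\mathcal{C}^{hBr}_{1,3}\simeq\Psi_\theta^c\simeq\Gamma_c\simeq\Omega^2S^2$ and nailing down both its local contribution and the $H$-space compatibility. Once $BF$ has been identified with the electric field map $C(\bR^2)\to\Omega^2S^2$, applying $\Omega B(-)$ --- equivalently, the group-completion theorem applied to the Pontryagin rings --- together with \cite{SegalConfigurationSpace} shows that $B\mathcal{C}^{Br}_{1,3}\to B\mathcal{C}^{hBr}_{1,3}$ is the group completion of the braid monoid, as claimed.
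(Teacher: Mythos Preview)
Your approach is correct but differs from the paper's. You identify $BF$ explicitly with Segal's electric-field map $C(\bR^2)\to\Omega^2S^2$ by tracking a single vertical strand through the chain of weak equivalences and checking $H$-space compatibility, then quote Segal. The paper instead argues structurally: it shows that $BF$ and every equivalence in the chain
$$B\mathcal{C}^{Br}_{1,3}\;\simeq\;\Psi_\theta^c(\bR^3)\;\simeq\;\Gamma_c(\cdots)\;\simeq\;\Omega^2S^2$$
is a map of algebras over the little $2$-discs operad, the operad action coming from disjoint union in the category and from juxtaposition in $\bR^2$ on the other models. Granting Segal's identification of the group completion, any $E_2$-map $\coprod_nB\beta_n\to\Omega^2S^2$ then deloops twice to a self-map of $S^2$, and one checks it has degree $1$; hence $BF$ is the group-completion map. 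The paper's route avoids your ``one-strand computation'' and the attendant bookkeeping of transporting a specific element through several equivalences, at the cost of verifying operad-compatibility at each stage (which is routine here, since every map is induced by disjoint union or restriction). Your route is more explicit and recovers the actual scanning map rather than just its homotopy class among $E_2$-maps; the trade-off is precisely the bookkeeping you flag as the main obstacle.
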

\begin{proof}[Proof sketch]
It is enough to show that it is a map of algebras over the little 2-discs operad, as then delooping it twice we obtain a self-map of $S^2$ which is easily checked to be of degree 1. 

The action of the little 2-discs operad on $B\mathcal{C}^{hBr}_{1,3}$ and $B\mathcal{C}^{Br}_{1,3}$ comes from disjoint union in the category in both cases, which is clearly preserved by the functor $F$. The monoidal product on $\coprod_{n \geq 0} B \beta_n$ comes from disjoint union of braids also, so the homotopy equivalence $B\mathcal{C}^{Br}_{1,3} \simeq \coprod_{n \geq 0} B \beta_n$ is an equivalence of algebras over the little 2-discs operad. All that is left to show is that $B\mathcal{C}^{hBr}_{1,3} \simeq \Omega^2 S^2$ is an equivalence of algebras over the little 2-discs operad: that is, that the double-loop space structure on $B\mathcal{C}^{hBr}_{1,3}$ is the usual one.

It is useful to recall the intermediate spaces
$$B\mathcal{C}^{hBr}_{1,3} \simeq \Psi_{\theta}^{c}(\mathbb{R}^3) \simeq \Gamma_c(\mathbb{R}^2 \times \Th(\theta^* (\gamma^+_1)^{\perp} \to P\Gr^+_1(\mathbb{R}^3)) \lra \mathbb{R}^2) \simeq  \Omega^2 S^2$$
which also admit actions of the little 2-discs operad. On $\Psi_\theta(\mathbb{R}^3)$ the action is by choosing once and for all diffeomorphisms $\varphi: \mathbb{R}^2 \lra D^2$, then taking submanifolds of $\mathbb{R}^2 \times \mathbb{R}$ (where the last factor determines the forwards direction) to submanifolds of $D^2 \times \mathbb{R}$ and then composing using the little 2-discs operad is the obvious way. On $\Gamma_c(\mathbb{R}^2 \times \Th(\theta^* (\gamma^+_1)^{\perp} \to P\Gr^+_1(\mathbb{R}^3)) \lra \mathbb{R}^2)$ we use the same diffeomorphism $\varphi$ to get sections of the corresponding bundle over $D^2$, and the little 2-discs operad again acts in the obvious way, extending by the section $\emptyset$ where necessary.

It is now clear that the middle equivalence is a map of algebras over the little 2-discs operad, and one can also see that the last map is, using the standard action on $\Omega^2 S^2$. It remains to show that the action we described on $\Psi_{\theta}^{c}(\mathbb{R}^3)$ and the action coming from disjoint union in the category agree, but this is clear.
\end{proof}

The observation of this section leads to an interesting general question. There are many ``rigid" local structures that can be put on submanifolds (in the above example, insisting that their tangent vectors always lie in the forward hemisphere), and typically such structures admit a ``flexible" or ``homotopical" analogue (in the above example, asking for a path from each tangent vector into the forward hemisphere). Taking the cobordism categories of manifolds equipped with such structures, there is a functor $\mathcal{C}^{rig}_d \to \mathcal{C}^{flex}_d$, and in the above example we have shown that on classifying spaces this is precisely group completion. It is not hard to construct examples of rigid structures where $\mathcal{C}^{rig}_d$ is empty but $\mathcal{C}^{flex}_d$ is not, so there is not always such a fundamental relationship: under what conditions on the rigid local structure is there a close relationship between the two classifying spaces?

\bibliographystyle{amsalpha}
\bibliography{ECC}

\end{document}